\providecommand{\keywords}[1]{\textbf{\sc{keywords:}} #1}
\definecolor{magenta}{rgb}{1.0, 0.13, 0.32}
\definecolor{citation}{rgb}{0.2,0.58,0.2} 
\definecolor{formula}{rgb}{0.1,0.2,0.6}
\definecolor{url}{rgb}{0.3,0,0.5} 
\newtheorem{lemma}{Lemma}[section]
\newtheorem{theorem}{Theorem}[section]
\newtheorem{defn}{Definition}[section]
\newtheorem{corol}{Corollary}[section]
\newtheorem{ex}{Example}[section]
\newtheorem{rem}{Remark}[section]
\numberwithin{equation}{section}
\def \dd {{\rm d}}
\newcommand{\vs}{\vspace{1mm}}
\newcommand{\R}{\mathds{R}}
\newcommand{\N}{\mathds{N}}
\newcommand{\OO}{\mathds{O}}
\newcommand{\I}{\mathds{I}}
\newcommand{\Tl}{{\rm Tail}}
\newcommand{\W}{\mathcal{W}}
\newcommand\ap{``}
\def \rnn {{\mathds {R}}^{2n+1}}
\def \L {\mathscr{L}}
\def \Lc {\mathcal{L}_K}
\def \K {\mathscr{K}}
\def \Ac {\mathcal{A}}
\def \a {{\alpha}}
\def \d {{\delta}}
\def \eps {{\varepsilon}}
\def \epsilon {{\varepsilon}}
\def \k {{\kappa}}
\def \l {{\lambda}}
\def \r {{\rho}}
\def \s {{\sigma}}
\def \t {{\tau}}
\def \x {{\xi}}
\def \z {{\zeta}}
\def \phi {{\varphi}}
\def \G {{\Gamma}}
\def \O {{\Omega}}
\def \Ob {{\overline{\O}}}
\def \Oo {{\Omega_{0}}}
\def \Om {{\Omega_{m_0}}}
\def \Onm {{\Omega_{N-m_0+1}}}
\def \Ov {{\Omega_v}}
\def \Ox {{\Omega_x}}
\def \D {{\Delta}}
\def \div {{\text{\rm div}}}
\def \loc {{\text{\rm loc}}}
\def \trace {{\text{\rm tr}}}
\def \diag {{\text{\rm diag}}}
\def \meas {{\text{\rm meas}}}
\def\p{\partial}
\def \tilde {\widetilde}
\def \supp {{\text{\rm supp}}}
\def \Q {{\mathcal{Q}}}
\def \D {{\mathcal{D}}}
\DeclareRobustCommand*{\bfseries}{%
	\not@math@alphabet\bfseries\mathbf
	\fontseries\bfdefault\selectfont
	\boldmath
}
\def\Xint#1{\mathchoice
	{\XXint\displaystyle\textstyle{#1}}%
	{\XXint\textstyle\scriptstyle{#1}}%
	{\XXint\scriptstyle\scriptscriptstyle{#1}}%
	{\XXint\scriptscriptstyle\scriptscriptstyle{#1}} %
	\!\int}
\def\XXint#1#2#3{{\setbox0=\hbox{$#1{#2#3}{\int}$}
		\vcenter{\hbox{$#2#3$}}\kern-.525\wd0}}
\def\dashint{\Xint-}
\begin{document}
	\title[Kolmogorov operators]{New perspectives on recent trends for Kolmogorov operators}
	
	\keywords{Kolmogorov-Fokker-Planck equation, weak regularity theory,  Harnack inequality,  H\"older regularity, ultraparabolic, fractional Laplacian\vs}

	\subjclass{35K70, 35Q84, 35B45, 35B65, 47G20, 35R11 \vs}
	\thanks{{\it Aknowledgments:} 
	The authors are members of the Gruppo Nazionale per l’Analisi Matematica, la Probabilità e le loro Applicazioni (GNAMPA) of the Istituto Nazionale di Alta Matematica (INdAM). 
	The first and third authors are partially supported by the INdAM - GNAMPA 
		project ``Variational problems for Kolmogorov equations: long-time analysis and regularity estimates'', 
		CUP\!$\_$\,E55F22000270001.}
	
	\author[F. Anceschi]{Francesca Anceschi}
	\address{Francesca Anceschi\\Dipartimento di Ingegneria Industriale e Scienze Matematiche\\
		Universit\'a Politecnica delle Marche\\ 
		Via Brecce Bianche, 12, 60131 Ancona, Italy}
	\email{\url{f.anceschi@staff.univpm.it}}
	\author[M. Piccinini]{Mirco Piccinini} 
	\address{Mirco Piccinini\\Dipartimento di Scienze Matematiche, Fisiche e Informatiche\\
		Universit\'a degli Studi di Parma\\ 
		Parco Area delle Scienze 53/a, Campus, 43124 Parma, Italy}
	\email{\url{mirco.piccinini@unipr.it}}

	\author[A. Rebucci]{Annalaura Rebucci}
	\address{Annalaura Rebucci\\Dipartimento di Scienze Fisiche, Informatiche e Matematiche \\
		Universit\'a degli Studi di Modena e Reggio Emilia\\
		Via Campi 213/B, 41125 Modena, Italy}
		\email{\url{annalaura.rebucci@unipr.it}}
	 	
		\maketitle
	
	\begin{abstract}
		\noindent
		After carrying out an overview on the non Euclidean geometrical setting suitable 
		for the study of Kolmogorov operators with rough coefficients, we list some properties of the functional space $\W$, 
		mirroring the classical $H^1$ theory for uniformly elliptic operators. 
		Then we provide the reader with the proof of a new Sobolev 
		embedding for functions in $\W$.
		Additionally, after reviewing recent results regarding weak regularity theory, 
		we discuss some of their recent applications to real life problems arising both in Physics and in 
		Economics. Finally, we conclude our analysis stating some recent results regarding 
		the study of nonlinear nonlocal kinetic Kolmogorov-Fokker-Planck operators.  
	\end{abstract}
	
	\setcounter{tocdepth}{1}  	
   \tableofcontents

\section{Introduction}
\label{intro}
	This work is devoted to the study of ultraparabolic second order partial differential equations of Kolmogorov type of the form
	\begin{eqnarray}\label{defL}
		 &&	\L u (x,t):=\sum_{i,j=1}^{m_0}\partial_{x_i}\left(a_{ij}(x,t)\partial_{x_j}u(x,t)\right)+\sum_{i,j=1}^N b_{ij}x_j\partial_{x_i}u(x,t)-\partial_t u(x,t)\notag\\
			&&\qquad \qquad \qquad \qquad \qquad+\sum_{i=1}^{m_0}b_i(x,t)\partial_i u(x,t)+c(x,t)u(x,t)=f(x,t),
	\end{eqnarray}
	where $z=(x,t)=(x_1,\ldots,x_N,t)\in \R^{N+1}$ and $1 \leq m_0 \leq N$. 
	Furthermore, matrices $A_0=(a_{ij}(x,t))_{i,j=1,\ldots,m_0}$ and $B=(b_{ij})_{i,j=1,\ldots,N}$ satisfy the following structural assumptions.

\begin{itemize}
\item[\textbf{(H1)}] The matrix $A_0$ is symmetric with real measurable entries. Moreover, 
	there exist two positive constants $\lambda$ and $\Lambda$ such that
	\begin{eqnarray*}\label{hypcost}
			\lambda |\xi|^2 \leq \sum_{i,j=1}^{m_0}a_{ij}(x,t)\xi_i\xi_j \leq \Lambda|\xi|^2
	\end{eqnarray*}
	for every $(x,t) \in \R^{N+1}$ and $\xi \in \R^{m_0}$. The matrix B has constant entries.
\end{itemize}
	
Despite the degeneracy of $\L$ whenever $m_0 < N$, its first order part is a strongly regularizing operator. Indeed, 
it is known that, under suitable structural assumptions on the matrix $B$, the \textit{principal part operator} $\L_0$ of $\L$
\begin{eqnarray*}\label{defK}
	\L_0 u(x,t):=\sum_{i=1}^{m_0}\partial^2_{x_i} u(x,t) + \sum_{i,j=1}^N b_{ij}x_j\partial_{x_i}u(x,t)-\partial_t u(x,t),\qquad (x,t) \in \R^{N+1},
\end{eqnarray*}
is hypoelliptic (i.~\!e. every distributional solution $u$ to $\L_0 u = f$ defined in 
some open set $\Omega \subset \R^{N+1}$ belongs to $C^\infty(\Omega)$ and it is a classical solution whenever $f \in C^\infty(\Omega)$). 
Hence, in the sequel, we rely on the following assumption.

\medskip

\begin{itemize}
\item[\textbf{(H2)}] The \textit{principal part operator $\L_0$ of $\L$} is hypoelliptic
and homogeneous of degree $2$ with respect to the family of dilations $\left( \d_{r} \right)_{r>0}$ introduced in \eqref{gdil}.
\end{itemize}

This latter assumption is clearly satisfied whenever $\L_0$ is uniformly parabolic, which corresponds to the choice $m_0=N$ and $B\equiv \mathds{O}$. 
Actually, in this case the principal part operator $\L_0$ coincides with the heat operator, which is known to be hypoelliptic. 
Moreover, \cite[Propositions 2.1 and 2.2]{LP} imply that assumption \textbf{(H2)}
is equivalent to assume there exists a basis of $\R^N$ with respect to which $B$ takes the form
\begin{equation}
	\label{B}
	B =
	\begin{pmatrix}
		\OO   &   \OO   & \ldots &    \OO   &   \OO   \\  
		B_1   &    \OO  & \ldots &    \OO    &   \OO  \\
		\OO    &    B_2  & \ldots &  \OO    &   \OO   \\
		\vdots & \vdots & \ddots & \vdots & \vdots \\
		\OO    &  \OO    &    \ldots & B_\k    & \OO
	\end{pmatrix},
\end{equation}
where every $B_j$ is a $m_{j} \times m_{j-1}$ matrix of rank $m_j$, with $j = 1, 2, \ldots, \k$, 
\begin{equation} \label{mj}
	m_0 \ge m_1 \ge \ldots \ge m_\k \ge 1 \hspace{5mm} \text{and} \hspace{5mm} 
	\sum \limits_{j=0}^\k m_j = N.
\end{equation} 
Thus, from now on we assume $B$ has the canonical form \eqref{B}.
Moreover, by introducing the \textit{spatial homogeneous dimension of} $\R^{N+1}$, a quantity defined as
\begin{equation}
	\label{hom-dim}
	Q = m_0 + 3 m_1  + \ldots + (2\k+1) m_\k,
\end{equation}
and the \textit{homogeneous dimension of} $\R^{N+1}$ defined as $Q+2$, we are now in a position to state our last assumption regarding the integrability of $b$, $c$ and of the source term $f$.

\medskip

\begin{itemize}
\item[\textbf{(H3)}] $c, f \in L^q_{loc}(\O)$, with $q > \frac{Q+2}{2}$, and $b \in \left( L^{\infty}_{loc} (\O) \right)^{m_0}$.
\end{itemize}

\medskip

From now on, we denote by $D=(\partial_{x_1},\ldots,\partial_{x_N})$, $\langle\cdot,\cdot \rangle$, and $\div$ 
the gradient, the inner product and the divergence in $\R^N$, respectively. Moreover, 
$D_{m_0}=(\partial_{x_1},\ldots,\partial_{x_{m_0}})$ and $\div_{m_0}$
stand for the partial gradient and the partial divergence in the first $m_0$ components, respectively. 

If $a_{ij}$ are the coefficients appearing in \eqref{defL} for every $i,j=1,\ldots,m_0$ and $a_{ij}\equiv 0$ whenever $i > m_0$, or $j>m_0$, then we introduce
\begin{align*}  
	A(x,t)= &\left(a_{ij}(x,t)\right)_{1\leq i,j \leq N}, \, \, \,	 
	b(x,t):=\left(b_1(x,t),\ldots,b_{m_0}(x,t),0,\ldots,0\right),
	 \\ \nonumber 
	 &\qquad \qquad Y:=\sum_{i,j=1}^N b_{ij}x_j\partial_{x_i}u(x,t)-\partial_t u(x,t).
\end{align*}
Thus, we are in a position to rewrite operators $\L$ and $\L_0$ in a compact form
\begin{align*} 
	\L u=\div(A D u) + Yu + \langle b, D u\rangle+ cu \quad \text{and} \quad \L_0 u = \Delta_{m_0}u + Y u ,
\end{align*}
and to introduce a useful example for the class of ultraparabolic operators of type \eqref{defL}.

\begin{ex} \label{kfp-ex}
	A notable prototype belonging to the class \eqref{defL} is the kinetic Kolmogorov-Fokker-Planck equation
	\begin{align} \label{kfp}
		\nabla_{v} \cdot \left(u (v,x,t)\,v + \nabla_v u (v,x,t)\right) + v \cdot \nabla_{x} u(v,x,t) - \partial_t u (v,x,t) = f(v,x,t),
	\end{align}
	where $(v,x,t) \in \R^{2n+1}$. Equation \eqref{kfp} is obtained from \eqref{defL} 
	by choosing $N=2n$, $\k = 1$, $m_0 = m_1= n$ and $c\equiv n$.
	From the physical point of view, Fokker-Planck equations like the one in \eqref{kfp} provide a continuous description of the dynamics of the 
distribution of Brownian test particles immersed in a fluid in thermodynamical equilibrium. More precisely, the distribution function $u$ of a test particle evolves according to the linear equation in \eqref{kfp}, provided that the test particle is much heavier than the molecules of the fluid. In particular, equation \eqref{kfp} is the backward Kolmogorov equation of the stochastic process
\begin{equation}\label{langevin}
\begin{cases}
    & dV_{t} =  \sqrt{2} dW_{t}-V_tdt, \\
	&d X_{t} =  V_t dt,
\end{cases}
\end{equation}
where $\left(W_{t}\right)_{t \ge 0}$ denotes a $n-$dimensional Wiener process. We refer the reader to {\cite{APsurvey}, and the reference therein, for an exhaustive treatment of Fokker-Planck equations, and their applications.
}
\end{ex}

\subsection{Geometrical setting}
\label{preliminaries}
First of all, we describe the most suitable geometrical setting for the study of $\L$, which is not an Euclidean one. Indeed, 
as it was firstly observed by Lanconelli and Polidoro in \cite{LP}, operator $\L_0$ is invariant with respect to left 
translation in the Lie group $\mathds{K}=(\mathds{R}^{N+1},\circ)$, whose group law is defined as 
\begin{equation}
	\label{grouplaw}
	(x,t) \circ (\xi, \tau) = (\xi + E(\tau) x, t + \tau ), \hspace{5mm} (x,t),
	(\xi, \tau) \in \R^{N+1},
\end{equation}
where the exponential of the group is
\begin{equation}\label{exp}
	E(s) = \exp (-s B), \qquad s \in \R.
\end{equation} 
We observe that $\mathds{K}$ is a non-commutative group with zero element $(0, \ldots ,0,0)$ and inverse
\begin{equation*}
(x,t)^{-1} = (-E(-t)x,-t).
\end{equation*}
Additionally, if for a given $\zeta \in \R^{N+1}$ we denote by $\ell_{\z}$ the left traslation on $\mathds{K}=(\R^{N+1},\circ)$ defined as follows
\begin{equation*}
	\ell_{\z}: \R^{N+1} \rightarrow \R^{N+1}, \quad \ell_{\z} (z) = \z \circ z,
\end{equation*}
then it is possible to show $\L_0$ is left invariant with respect to the Lie product $\circ$, i.e.
\begin{equation*}
	\label{ell}
    \L_0 \circ \ell_{\z} = \ell_{\z} \circ \L_0 \qquad \text{or, equivalently,} 
    \qquad \L_0\left( u( \z \circ z) \right)  = \left( \L_0 u \right) \left( \z \circ z \right),
\end{equation*}
for every $u$ sufficiently smooth.

Now, let us focus on assumption \textbf{(H2)}. Indeed, the hypoellipticity of $\L_0$ is implied by H\"{o}rmander's rank condition, 
which was introduced for the first time in \cite{H} and reads as:
\begin{equation}\label{hormander}
{\rm rank\ Lie}\left(\partial_{x_1},\dots,\partial_{x_{m_0}},Y\right)(x,t) =N+1,\qquad \forall \, (x,t) \in \R^{N+1},
\end{equation}
where ${\rm  Lie}\left(\partial_{x_1},\dots,\partial_{x_{m_0}},Y\right)$ denotes the Lie algebra generated by the first order differential operators $\left(\partial_{x_1},\dots,\partial_{x_{m_0}},Y\right)$ computed at $(x,t)$.
On a side note, it is worth noting that requiring
\begin{eqnarray}\label{Cpositive}
	C(t) > 0, \quad \text{for every $t>0$},
\end{eqnarray}
it is equivalent to assume the hypoellipticity for $\L_0$ as in {\bf (H2)},
see \cite[Proposition A.1]{LP}, 
where $E(\cdot)$ is defined in \eqref{exp} and
the covariance matrix is defined as
\begin{equation*}
	C(t) = \int_0^t \hspace{1mm} E(s) \, A_0 \, E^T(s) \, ds.
\end{equation*}

As far as we are concerned with the second half of assumption \textbf{(H2)},
$\L_0$ is invariant with respect to the family of dilations $(\delta_r)_{r>0}$ if  
\begin{equation}
	\label{Ginv}
      	 \L_0 \left( u \circ \delta_r \right) = r^2 \delta_r \left( \L_0 u \right), \quad \text{for every} \quad r>0,
\end{equation}
and for every function $u$ sufficiently smooth. As pointed out in \cite[Proposition 2.2]{LP}, it is possible to read this dilation invariance property 
in the expression of the matrix $B$ in \eqref{B}. Specifically, $\L_0$ satisfies \eqref{Ginv} if and only if $B$ takes the form \eqref{B}.
In this case, it holds
\begin{equation}
	\label{gdil}
	\d_{r} = \text{diag} \left( r \I_{m_0}, r^3 \I_{m_1}, \ldots, r^{2\k+1} \I_{m_\k}, 
	r^2 \right), \qquad \qquad r > 0.
\end{equation}
Taking this definition into account, we introduce a homogeneous norm of degree $1$ with respect to 
$(\d_{r})_{r>0}$ and a corresponding invariant quasi-distance with respect to the group operation \eqref{grouplaw}.
\begin{defn}
	\label{hom-norm}
	Let $\a_1, \ldots, \a_N$ be positive integers such that
	\begin{equation}\label{alphaj}
		\diag \left( r^{\a_1}, \ldots, r^{\a_N}, r^2 \right) = \d_r.
	\end{equation}
	If $\Vert z \Vert = 0$, then we set $z=0$; if $z \in \R^{N+1} 
	\setminus \{ 0 \}$, then we define $\Vert z \Vert = r$, where $r$ is the 
	unique positive solution to the equation
	\begin{equation*}
		\frac{x_1^2}{r^{2 \a_1}} + \frac{x_2^2}{r^{2 \a_2}} + \ldots 
		+ \frac{x_N^2}{r^{2 \a_N}} + \frac{t^2}{r^4} = 1.
	\end{equation*}
	Accordingly, we define the quasi-distance $d$ by
	\begin{equation}\label{def-dist}
		d(z, w) = \Vert z^{-1} \circ w \Vert , \hspace{5mm} z, w \in 
		\R^{N+1}.
	\end{equation}
\end{defn}

We remark that the Lebesgue measure is invariant with respect to the translation group 
associated to $\L_0$, since $\det E(t) = e^{t \hspace{1mm} \text{\rm trace} \,B} = 1$. 
Moreover, by definition, the semi-norm $\Vert \cdot \Vert$ is homogeneous of degree $1$ with respect to $(\d_r )_{r>0}$.
Indeed, 
	\begin{equation*}
		\Vert \d_r (x,t) \Vert = r  \Vert  (x,t) \Vert
		\qquad \forall r >0 \hspace{2mm} \text{and} \hspace{2mm} (x,t) \in \R^{N+1}.
	\end{equation*}
	Since in $\R^{N+1}$ all the norms which are $1$-homogeneous with respect to 
	$(\d_r)_{r>0}$ are equivalent, the one introduced in Definition 
	\ref{hom-norm} is equivalent to other norms, such as the following one
	\begin{equation*}
		\Vert (x,t) \Vert_1 =|t|^{\frac{1}{2}}+|x|, \quad |x|=\sum_{j=1}^N |x_j|^{\frac{1}{\alpha_j}}
	\end{equation*}
	where the exponents $\alpha_j$, for $j=1,\ldots,N$ were introduced in \eqref{alphaj}. 
	Nevertheless, Definition \ref{hom-norm} is usually preferred since its level sets are smooth surfaces. For
	further information on this matter, we refer to \cite{Manfredini}.	
\begin{rem}
In the case of the kinetic Kolmogorov-Fokker-Planck equation \eqref{kfp} in the absence of friction, i.e.
\begin{align} \label{kfp-nofriction}
		\K_0 u(v,x,t):=\Delta_{v} u (v,x,t)  + v \cdot \nabla_{x} u(v,x,t) - \partial_t u (v,x,t) = f(v,x,t), 
	\end{align}
where $(v,x,t) \in \R^{2n+1}$, the Lie group has a quite natural intepretation. Indeed the composition law \eqref{grouplaw} agrees with the \emph{Galilean} change of variables 
\begin{equation}\label{galilean}
    (v,x,t) \circ (v_{0}, x_{0}, t_{0}) = (v_{0} + v, x_{0} + x + t v_{0}, t_{0} + t). 
\end{equation}
It is easy to see that $\K_0$ is invariant with respect to the above change of variables. Specifically, if $w(v,x,t) = u (v_{0} + v, x_{0} + x + t v_{0}, t_{0} + t)$ and $g(v,x,t) = f(v_{0} + v, x_{0} + x + t v_{0}, t_{0} + t)$, then 
\begin{equation}\label{inv-K}
	\K_0 u = f \quad \iff \quad \K_0 w = g \quad \text{for  every} \quad (v_{0}, x_{0}, t_{0}) \in \R^{2n+1}.
\end{equation}
Moreover, $\K_0$ is invariant with respect to the dilation \eqref{gdil}, which in this case takes the simpler form $\delta_r(v,x,t) := (r v, r^3 x, r^2 t)$. Let us remark that the dilation acts as the usual parabolic scaling with respect to variables $v$ and $t$. Moreover, the term $r^3$ in front of $x$ is due to the fact that the velocity $v$ is the derivative of the position $x$ with respect to time $t$.
\end{rem}	

Hence, considering the group law \ap$\circ$'' and the family of dilations $\left(\delta_r\right)_{r>0}$,
we are now in a position to introduce a suitable family of cylinders, starting from the unit past cylinders
\begin{eqnarray*}
	\Q_1 := B_1 \times B_1 \times \ldots \times B_1 \times (-1,0),
	\qquad \widetilde \Q_1 := B_1 \times B_1 \times \ldots \times B_1 \times (-1,0]
\end{eqnarray*}
defined through open balls 
\begin{eqnarray*}
	B_1 = \lbrace x^{(j)}\!\in\R^{m_j} : \vert x \vert \leq 1 \rbrace,
\end{eqnarray*}
where $j=0,\ldots,\kappa$ and $\vert \cdot \vert$ denotes the Euclidean norm in $\R^{m_j}$.
Then, for every $z_0 \in \mathds{R}^{N+1}$ and $r>0$, we set
\begin{eqnarray*}\label{rcylind}
	\Q_r(z_0):=z_0\circ\left(\delta_r\left(\Q_1\right)\right)=\lbrace z \in \mathds{R}^{N+1} \,:\, z=z_0\circ \delta_r(\zeta), \zeta \in \Q_1 \rbrace.
\end{eqnarray*}
We point out that this definition of slanted cylinders admits an equivalent ball representation, see 
\cite[equation (21)]{WZ3}, that it is sometimes preferred. More specifically, there exists a positive constant 
$\overline c$  such that
	\begin{align*}
		 B_{r_{1}}(x^{(0)}_{0}) &\times B_{r_{1}^{3}}(x^{(1)}_{0}) 
		\times \ldots \times
		B_{r_{1}^{2\k+1}}(x^{(\k)}_{0}) \times (t_0-r_{1}^{2}, t_0] \\
		&\subset
		\Q_{r}(z_{0}) 
		\subset B_{r_{2}}(x^{(0)}_{0}) \times B_{r_{2}^{3}}(x^{(1)}_{0}) 
		\times \ldots \times
		B_{r_{2}^{2\k+1}}(x^{(\k)}_{0}) \times (t_0-r_{2}^{2}, t_0],
	\end{align*}
	where $r_{1}= r/\overline c$ and $r_{2} = \overline c r$.
Moreover, since $\det \d_r = r^{Q+2}$, it is true that
\begin{equation*}
	\meas \left( \Q_r(z_0) \right) = r^{Q+2} \meas \left( \Q_1(z_0) \right), \qquad \forall
	\ r > 0, z_0 \in \R^{N+1},
\end{equation*}
where $Q$ is the homogeneous dimension defined in \eqref{hom-dim}. Finally, 
by \cite[Lemma 6]{CPP} there exists a positive constant $\widetilde{c} \in (0,1)$ such that
	\begin{equation}
		\label{cylindereq}
		z \circ \Q_{\widetilde{c}  (r-\r)} \subseteq \Q_r, \qquad \text{for every } 0 < \r < r \le 1
		\quad \text{and} \quad z \in \Q_\r.
	\end{equation}
We refer to \cite{APsurvey, BLU} and the references therein for more informations on this subject.

Finally, we conclude this subsection by recalling the useful notion of H\"older continuous function
in this non Euclidean setting.
\begin{defn}
    \label{holdercontinuous}
    Let $\a$ be a positive constant, $\a \le 1$, and let $\O$ be an open subset of $\R^{N+1}$. We 
    say that a function $f : \O \longrightarrow \R$ is H\"older continuous with exponent $\a$ in $\O$
    with respect to the group $\mathds{K}=(\mathds{R}^{N+1},\circ)$, defined in \eqref{grouplaw}, (in short: H\"older 
    continuous with exponent $\a$, $f \in C^\a_{K} (\O)$) if there exists a positive constant $C>0$ such that 
    \begin{equation*}
        | f(z) - f(\z) | \le C \; d(z,\zeta)^{\a} \qquad { \rm for \, every \, } z, \z \in \O,
    \end{equation*}
where $d$ is the distance defined in \eqref{def-dist}.

To every bounded function $f \in C^\a_{K} (\O)$ we associate the semi-norm
    	\begin{equation*}
       		[ f ]_{C^{\a} (\O)} =  
       		\hspace{1mm} 
       		 \sup \limits_{z, \z \in \O \atop  z \ne \z} \frac{|f(z) - f(\z)|}{d(z,\zeta)^{\a}}.
        \end{equation*}
    Moreover, we say a function $f$ is locally H\"older continuous, and we write $f \in C^{\a}_{K,\loc}(\O)$,
    if $f \in C^{\a}_{K}(\O')$ for every compact subset $\O'$ of $\O$.
\end{defn}

\subsection{Fundamental solution} \label{fun-sol-s}
Another useful tool in the study of \eqref{defL} with rough coefficients is the fundamental solution of its principal part operator $\L_0$. 
Indeed, under assumption \textbf{(H2)}, H\"{o}rmander explicitly constructed in \cite{H} the fundamental solution of $\L_0$, 
that is
\begin{equation*} 
	\Gamma (z,\zeta) = \Gamma (\zeta^{-1} \circ z, 0 ), \hspace{4mm} 
	\forall z, \zeta \in \R^{N+1}, \hspace{1mm} z \ne \zeta,
\end{equation*}
where
\begin{equation*} 
	\Gamma ( (x,t), (0,0)) = \begin{cases}
		\frac{(4 \pi)^{-\frac{N}{2}}}{\sqrt{\text{det} C(t)}} \exp \left( - 
		\frac{1}{4} \langle C^{-1} (t) x, x \rangle - t \, \trace (B) \right), \hspace{3mm} & 
		\text{if} \hspace{1mm} t > 0, \\
		0, & \text{if} \hspace{1mm} t < 0.
	\end{cases}
\end{equation*}
Since assumption \textbf{(H2)} implies that condition \eqref{Cpositive} holds true, the function $\Gamma$ is well-defined. Moreover, since $\L_0$ is dilation invariant with respect to $(\d_r )_{r>0}$, also $\G$ is a homogeneous function of degree $- Q$, namely
\begin{equation*}
	\Gamma \left( \d_{r}(z), 0 \right) = r^{-Q} \hspace{1mm} \Gamma
	\left( z, 0 \right), \hspace{5mm} \forall z \in \R^{N+1} \setminus
	\{ 0 \}, \hspace{1mm} r > 0.
\end{equation*}
This property implies a $L^p$ estimate for Newtonian potentials (see for instance \cite{AMP}).
Hence, by defining the $\G-$\textit{potential} of a function $f \in L^1(\R^{N+1})$ as
\begin{equation}
	\label{L0p}
	\G (f) (z) = \int_{\R^{N+1}} \G (z, \z ) f(\z) \dd\z, \qquad 
	z \in \R^{N+1},
\end{equation}
we are able to introduce a function $\G (D_{m_0} f): \R^{N+1} \longrightarrow \R^{m_0}$, that is 
well-defined for any $f \in L^p (\R^{N+1})$, at least in the distributional sense, see \cite{CPP}. Indeed
\begin{equation} 
	\label{pot0}
	\G (D_{m_0} f ) (z) := - \int_{\R^{N+1}} D^{(\z)}_{m_0} \G (z, \z) \,
	f(\z) \, \dd\z,
\end{equation}
where $D^{(\z)}_{m_0} \G (z, \z)$ is the gradient with respect to $\x_1, 
\ldots, \x_{m_0}$. Thus, by directly applying \cite[Proposition 3]{CPP}, with $\a = 1$ and $\a=2$ when considering the $\Gamma$-potential for $f$ and $D_{0}f$, respectively, 
it is possible to derive explicit potential estimates for \eqref{L0p} and \eqref{pot0}.  
\begin{corol}
	\label{corollary}
	Let $f \in L^p (\Q_r)$. There exists a positive constant $c = c(T,B)$ 
	such that
	\begin{align*} 
		\Vert \G (f) \Vert_{L^{p**}(\Q_r)} &\le c \Vert f 
		\Vert_{L^{p}(\Q_r)}, \qquad \quad
		\Vert \G (D_{m_0} f) \Vert_{L^{p*}(\Q_r)} \le c 
		\Vert f \Vert_{L^{p}(\Q_r)},
	\end{align*}
	where $\frac{1}{p*}= \frac{1}{p} - \frac{1}{Q+2}$ and $\frac{1}{p**}= 
	\frac{1}{p} - \frac{2}{Q+2}$.
\end{corol}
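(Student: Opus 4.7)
The plan is to recognize both bounds as instances of a Hardy--Littlewood--Sobolev type inequality on the homogeneous Lie group $\lL = (\R^{N+1},\circ)$, and then to invoke \cite[Proposition 3]{CPP} with the two appropriate choices of order.

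The first step is to make the homogeneity of the two kernels explicit. Since $\L_0$ is dilation invariant under $(\d_r)_{r>0}$, its fundamental solution satisfies $\G(\d_r z, 0) = r^{-Q} \G(z,0)$, as recorded right after \eqref{pot0}. A direct differentiation in $x_1,\ldots,x_{m_0}$, each of which scales by $r$ under $\d_r$, shows that every component of $D_{m_0}\G(\cdot,0)$ is homogeneous of degree $-(Q+1)$. Combined with the fact that the homogeneous dimension of $\R^{N+1}$ is $Q+2$, this identifies the convolution operators in \eqref{L0p} and \eqref{pot0} as Riesz-type potentials of orders $\alpha=2$ and $\alpha=1$, respectively; the Gaussian factor in the explicit expression of $\G$ guarantees the integrability at infinity required for such an identification.

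The second step is to apply \cite[Proposition 3]{CPP}, which asserts that an integral operator whose kernel is homogeneous of degree $-(Q+2-\alpha)$ with respect to $(\d_r)_{r>0}$ maps $L^p(\R^{N+1})$ continuously into $L^q(\R^{N+1})$ with $\tfrac{1}{q}=\tfrac{1}{p}-\tfrac{\alpha}{Q+2}$. Choosing $\alpha=2$ for \eqref{L0p} yields the first inequality, with exponent $p^{**}$, and $\alpha=1$ for \eqref{pot0} yields the second, with exponent $p^{*}$. The passage from the global $\R^{N+1}$-bound to the local bound on $\Q_r$ is then effected by extending $f\in L^p(\Q_r)$ by zero and restricting the output. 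The dependence $c=c(T,B)$ tracks the role of $B$ entering via $E(s)=\exp(-sB)$ and the role of $T$ bounding $|t|$ on the cylinders under consideration, both of which appear through the covariance matrix $C(t)$ in the exponential factor of $\G$.

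The only step with any real content, namely the Hardy--Littlewood--Sobolev inequality in the non-Euclidean group $\lL$, is outsourced to \cite[Proposition 3]{CPP}; consequently, the main task here is to verify the homogeneity of the two kernels and to match the exponents $p^{*}$ and $p^{**}$ with the orders $\alpha=1$ and $\alpha=2$. The remaining care is bookkeeping of the constant $c(T,B)$.
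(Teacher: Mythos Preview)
Your proposal is correct and follows exactly the paper's approach: the corollary is stated as an immediate consequence of \cite[Proposition~3]{CPP}, applied with $\alpha=2$ for the potential $\Gamma(f)$ (kernel homogeneous of degree $-Q$) and $\alpha=1$ for $\Gamma(D_{m_0}f)$ (kernel homogeneous of degree $-(Q+1)$). Note that the paper's prose preceding the corollary lists the values of $\alpha$ in the opposite order, but your assignment is the one consistent with the exponents $p^{**}$ and $p^{*}$ in the statement.
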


\subsection{Plan of the paper}
This work is organized as follows. Section \ref{preliminaries-f} is devoted to the study of the functional space $\W$.
In particular, after recalling all known results regarding the space $\W$, we prove a Sobolev embedding for functions belonging to it.
In Section \ref{weak-reg} we provide an overview on the 
De Giorgi-Nash-Moser weak regularity theory in this framework. 
Section \ref{applications} is devoted to the application of these results to the study of real life problems  	
arising both in Physics and Economics.
Finally, we conclude with Section \ref{nnkfp}, where we discuss recent trends of nonlinear nonlocal Kolmogorov type operators. 

\section{Functional setting}
\label{preliminaries-f}
From now on, we consider a set $\O = \O_{m_{0}} \times \O_{N - m_{0} + 1}$ of $\R^{N+1}$, where
$\O_{m_{0}}$ is a bounded {\color{blue}$C^1$ domain of $\R^{m_{0}}$} and 
$\O_{N-m_{0} + 1}$ is a bounded domain of $\R^{N - m_{0} + 1}$. 
Then, according to the scaling introduced in \eqref{gdil}, we split the coordinate $x\in\R^N$ as
\begin{equation}\label{split.coord.RN}
	x=\big(x^{(0)},x^{(1)},\ldots, x^{(\kappa)}\big), \qquad x^{(0)}\!\in\R^{m_0}, \quad x^{(j)}\!\in\R^{m_j}, \quad j\in\{1,\ldots,\kappa\},
\end{equation}
where every $m_{j}$ is a positive integer satisfying conditions exposed in \eqref{mj}.

Furthermore, we denote by $\D(\O)$ the set of $C^\infty$ functions compactly supported in $\O$ 
and by $\D'(\O)$ the set of distributions in $\O$. From now on, $H^{1}_{x^{(0)}}$ is the Sobolev space of functions $u \in  L^{2} (\O_{m_{0}})$ with 
distribution gradient $D_{m_0}u$ lying in $( L^{2} (\O_{m_{0}}) )^{m_{0}}$, i.e. 
\begin{equation} \label{h1-def}
	H^{1}_{x^{(0)}} (\Om) := \left\{ u \in L^{2} (\O_{m_{0}}) : \, D_{m_{0}} u \in ( L^{2} (\O_{m_{0}}) )^{m_{0}}
	\right\},
\end{equation}
paired with the norm
\begin{equation} \label{h1-norma}
	\| u \|^2_{H^{1}_{x^{\small (0)}} (\Om) } := \| u \|^2_{L^{2} (\O_{m_{0}}) } + \| D_{m_{0} } u \|^2_{L^{2} (\O_{m_{0}})}.
\end{equation}
Now, let $H^{1}_{c,x^{(0)}}$ denote the closure of $C^\infty_{c}(\O_{m_{0}})$ in the norm of $H^{1}_{x^{(0)}}$, and recall that {\color{blue}$C^\infty_{c}(\overline \O_{m_{0}})$ is dense in $H^{1}_{x^{(0)}}$ since $\O_{m_{0}}$ is a bounded $C^1$ domain by assumption.} Moreover, $H^{1}_{c,x^{(0)}}$ is a reflexive Hilbert space and thus we may consider its dual space 
$$
\left( H^{1}_{c,x^{(0)}} \right)^{*} = H^{-1}_{x^{(0)}} \quad \text{and} \quad 
\left( H^{-1}_{x^{(0)}} \right)^{*} = H^{1}_{c,x^{(0)}},
$$
where the adopted notation is the classical one. Hence, we denote by $H^{-1}_{x^{(0)}}$ the dual of 
$H^{1}_{c,x^{(0)}}$ acting on functions in $H^{1}_{c,x^{(0)}}$ through the duality pairing 
$\langle \cdot | \cdot \rangle := \langle \cdot | \cdot \rangle_{H^{1}_{x^{(0)}},H^{1}_{c,x^{(0)}}} $.
From now on, we consider the shorthand notation $L^{2}H^{-1}$ to denote $L^{2}\left( \O_{N-m_{0}+1}; H^{-1}_{c,x^{(0)}} \right)$.

Then in a standard manner, see \cite{AM, AR-harnack, LN}, we introduce the space of functions $\W$ as the closure of 
$C^{\infty} (\overline \O)$ in the norm 
\begin{equation}
	\label{normW}
	\| u \|^2_{\W} = \| u \|^2_{L^2 \left( \O_{N-m_{0} + 1};H^1_{x^{(0)}} \right)} + \| Y u \|^2_{L^2\left( \O_{N-m_{0} + 1};H^{-1}_{x^{(0)}}\right)},
\end{equation}
which can explicitly be computed as
\begin{align*}
	\| u \|^2_{\W} =  \int_{\O_{N-m_{0} + 1}} \| u(\cdot, y,t) \|_{H^1_{x^{(0)}}}^{2} \dd y \, \dd t  + 
				  \int_{\O_{N-m_{0} + 1}}  \| Y u (\cdot, y,t) \|^2_{H^{-1}_{x^{(0)}}} \dd y \, \dd t ,
\end{align*}
where $y = (x^{(1)},\ldots, x^{(\kappa)})$. In particular, it is possible to infer that $\W$ is a Banach space and we recall that the dual of
$L^{2}(\O_{N-m_{0}+1}; H^{1}_{c,x^{(0)}})$ satisfies 
\begin{align*}
	&\left( L^{2}(\O_{N-m_{0}+1}; H^{1}_{c,x^{(0)}}) \right)^{*} =  L^{2}(\O_{N-m_{0}+1}; H^{-1}_{c,x^{(0)}}) 
	\quad \text{and} \quad  \\
	&\qquad \qquad  \qquad \qquad \left( L^{2}(\O_{N-m_{0}+1}; H^{-1}_{c,x^{(0)}})   \right)^{*} = L^{2}(\O_{N-m_{0}+1}; H^{1}_{c,x^{(0)}}) .
\end{align*}

This functional setting was firstly proposed in \cite{AM} for the study of weak regularity theory for the Kolmogorov-Fokker-Planck equation, 
see Example \ref{kfp-ex}. Later on, it was considered 
in \cite{LN} for the study of well-posedness results for a Dirichlet problem in the kinetic setting. 
Finally, two of the authors extended this framework in \cite{AR-harnack} to the ultraparabolic setting considered in this work.

Notice that the major issue one has to tackle with when dealing with $\W$ is the duality pairing between $L^{2}H^{1}$ and $L^{2}H^{-1}$. Usually, when considering bounded domains, this problem is overcome  observing that for every open subset $A \subset \R^{n}$ and for every 
function $g \in H^{-1}(A )$, there exist two functions
$H_0$, $H_1 \in L^2(A)$ such that 
\begin{equation*}
	g = \div_{m_{0}} H_{1} + H_{0} \qquad \text{and} \qquad 
	\Vert H_0 \Vert_{L^2(A)}+\Vert H_1 \Vert_{L^2(A)} \leq 
	2\Vert g \Vert_{H^{-1}(A)},
\end{equation*}
see, for example,~\cite[Chapter 4]{SK}.

Moreover, in recent years the community focused on providing suitable functional inequalities for
functions belonging to $\W$. In particular, there was necessity to prove a suitable Poincaré and 
a suitable Sobolev embedding in this framework. As far as we are concerned with the first one, 
the proof of a weak Poincaré inequality was recently achieved in \cite{AR-harnack, GI}. Therefore, we will only recall 
its statement and a scheme of its proof. On the other hand, the Sobolev embedding is still not available in literature
and for this reason we provide the reader with its full proof.

\subsection{Poincaré inequality} 
An useful tool we have at our disposal when considering functions belonging to $\W$ is a weak Poincaré inequality 
proved for the first time in \cite{GI} for the kinetic case and later on extended in \cite{AR-harnack} to the setting considered in this work. 
The idea is to firstly derive a local Poincaré inequality in terms of an error function $h$ defined as the solution of a suitable Cauchy problem 
\begin{equation*}
	\left\{ 
	\begin{array}{ll} 
		\tilde{\K} h=u \tilde{\K} \psi,\quad &\textit{in $  \mathds{R}^{N} \times (-\r^2,0)$}\\
		h=0,\quad &\textit{in $  \mathds{R}^{N}\times \lbrace -\r^2 \rbrace$}
	\end{array} \right.
\end{equation*}
where $\rho > 0$, $\psi$ is a given cut-off function and $\tilde{\K} $ is an auxiliary operator defined as
\begin{equation*}
	\tilde{\K} u(x,t):=-\sum_{i=1}^{m_0}\partial^2_{x_i} u(x,t) - \sum_{i,j=1}^N b_{ij}x_j\partial_{x_i}u(x,t)+\partial_t u(x,t),
	\qquad (x,t) \in \R^{N+1}. 
\end{equation*}
We observe that the auxiliary operator $\tilde{\K}$ is chosen in accordance with the definition of $\W$, where only the partial gradient $D_{m_0}$ and the Lie derivative $Y$ appear.
On one hand, completing the proof by explicitly controlling the error $h$ through the $L^{\infty}$ norm of the function $u$ allows us to avoid studying functional properties of 
weak solutions to \eqref{defL} and to obtain a purely functional result. On the other hand, it is immediately clear that our inequality only holds for \textit{bounded} functions belonging to $\W$.  

In order to state this result, we first need to introduce the following sets
\begin{align}\label{Qzero}
	\Q_{zero}&=\lbrace (x,t) :  |x_j|\leq \eta^{\alpha_j}, j=1,\ldots,N, -1-\eta^2 < t \leq -1\rbrace,\\ \nonumber
	\Q_{ext}&=\lbrace (x,t) :  |x_j|\leq 2^{\alpha_j}R, j=1,\ldots,N, -1-\eta^2 < t \leq 0\rbrace,
\end{align}
where $R>1$, $\eta \in (0,1)$, exponents $\alpha_j$, for $j=1,\ldots,N$, are defined in \eqref{alphaj}; $\Q_{zero}$ and $\Q_{ext}$ are introduced via the ball representation. 
\begin{theorem}[Weak Poincaré inequality]\label{weak-poincare}
	Let $\eta \in (0,1)$; let $\Q_{zero}$ and $\Q_{ext}$ be defined as in \eqref{Qzero}. Then there exist $R>1$ and $\vartheta_0 \in (0,1)$ such that for any non-negative function $u \in \W$ such that $u \leq M$ in $\Q_1= B_1 \times B_1 \times \ldots \times B_1 \times (-1,0)$ for a positive constant $M$ and 
\begin{equation*}
	\left\vert \lbrace u= 0 \rbrace \cap \Q_{zero}\right\vert \geq \frac{1}{4} \left\vert \Q_{zero}\right\vert,
\end{equation*}	
we have
	\begin{equation*}
		\Vert (u-\vartheta_0 M)_+ \Vert_{L^2(\Q_1)}\leq C_P\left(\Vert D_{m_0} u \Vert_{L^2(\Q_{ext})}+\Vert Y u \Vert_{L^2H^{-1}(\Q_{ext})} \right),
	\end{equation*} 
	where $C>0$ is a constant only depending on $Q$.
\end{theorem}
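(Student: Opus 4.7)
The plan is to follow the strategy of \cite{GI}, adapted in \cite{AR-harnack} to the present ultraparabolic setting. The idea is to decompose $u\psi$, where $\psi$ is a smooth cut-off with $\psi\equiv 1$ on $\Q_{1}$ and $\supp\psi\subset\Q_{ext}$, into a ``functional'' part $w := u\psi - h$ and an error $h$ solving the Cauchy problem stated immediately before the theorem with $\rho^{2}=1+\eta^{2}$. Since $\tilde{\K}=-\Delta_{m_{0}}-Y$, the product rule gives
\begin{equation*}
    \tilde{\K} w = \psi\,\tilde{\K} u - 2 D_{m_{0}}\psi\cdot D_{m_{0}} u \quad \text{on }\mathds{R}^{N}\times(-1-\eta^{2},0),
\end{equation*}
with vanishing initial datum at $t=-1-\eta^{2}$ and spatial support in $\Q_{ext}$. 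Testing this equation against $w$, integrating by parts, and exploiting the antisymmetry of $Y$ together with the $L^{2}H^{1}$--$L^{2}H^{-1}$ duality, I would obtain a standard energy inequality
\begin{equation*}
    \|w\|_{L^{2}(\Q_{ext})}^{2} + \|D_{m_{0}}w\|_{L^{2}(\Q_{ext})}^{2} \leq C\left(\|D_{m_{0}}u\|_{L^{2}(\Q_{ext})}^{2} + \|Yu\|_{L^{2}H^{-1}(\Q_{ext})}^{2}\right),
\end{equation*}
with $C=C(\eta,R,\psi)$. This is the $w$--half of the desired bound.

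The error $h$ would be handled through the fundamental-solution representation
\begin{equation*}
    h(z) = \int_{\mathds{R}^{N+1}} \tilde{\G}(z,\zeta)\,u(\zeta)\,\tilde{\K}\psi(\zeta)\,\dd\zeta,
\end{equation*}
where $\tilde{\G}$ is the fundamental solution of $\tilde{\K}$ and $\tilde{\K}\psi$ is supported in the shell $\Q_{ext}\setminus\Q_{1}$. The crucial input is the measure hypothesis $|\{u=0\}\cap\Q_{zero}|\geq\tfrac{1}{4}|\Q_{zero}|$: by choosing $R$ sufficiently large in terms of the block structure \eqref{B} and the dilation scales \eqref{gdil}, the drift–diffusion characteristics of $\tilde{\K}$ traced backward from points of $\Q_{1}$ reach $\Q_{zero}$ within the time slab $(-1-\eta^{2},-1)$, so that $h$ is realised as a non-negative-kernel average of $u$ charging $\Q_{zero}$ with quantitative weight. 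Combined with the assumption $u\leq M$ on $\Q_{ext}$, this yields $h\leq (1-\vartheta_{0})M$ on $\Q_{1}$ for some $\vartheta_{0}\in(0,1)$ depending only on $Q$, $R$ and $\eta$.

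Combining these on $\Q_{1}$, where $u = w + h$ since $\psi\equiv 1$, gives the pointwise inequality $(u-\vartheta_{0}M)_{+}\leq |w|$, and the claim follows by squaring, integrating, and invoking the energy estimate for $w$. \emph{The main obstacle} is the quantitative characteristic-reaching argument in the non-Euclidean geometry of $\mathds{K}$: in the purely kinetic case $\kappa=1$ this reduces to a Gaussian computation in Galilean coordinates \eqref{galilean}, but for general $\kappa$ one has to balance contributions across the blocks $B_{j}$ in \eqref{B} and at the different scales $r^{\alpha_{j}}$ appearing in \eqref{gdil}, which requires delicate Gaussian-type lower bounds on $\tilde{\G}$ at each scale in order to produce the scale-free constants $\vartheta_{0}$ and $C_{P}$.
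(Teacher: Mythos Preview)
Your strategy—the decomposition $u\psi=w+h$ via the auxiliary Cauchy problem and the energy estimate for $w$—is precisely the scheme the paper outlines (following \cite{GI,AR-harnack}). However, the treatment of $h$ contains a genuine gap. With $\psi$ compactly supported in $\Q_{ext}$ as you prescribe, $\tilde{\K}\psi=-\Delta_{m_0}\psi-Y\psi$ changes sign (the $x^{(0)}$-Laplacian of any compactly supported bump does), so the kernel $\tilde{\Gamma}(z,\cdot)\,\tilde{\K}\psi$ is \emph{not} non-negative and $h$ is not a positive average of $u$; your pointwise bound on $h$ then has no foundation. The fix in \cite{GI,AR-harnack} is to take $\psi=\psi(t)$ depending on time only, increasing from $0$ at $t=-1-\eta^{2}$ to $1$ on $[-1,0]$: then $\tilde{\K}\psi=\partial_t\psi\ge0$, the kernel is non-negative and integrates (in $\zeta$) to $\psi(t)=1$ on $\Q_1$, and the averaging picture becomes correct. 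The spatial localisation you attribute to $\psi$ must instead come from the Gaussian-type decay of $\tilde{\Gamma}$ along the drift of $\tilde{\K}$; this is exactly what fixes the parameter $R$.

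A second point: you invoke ``$u\le M$ on $\Q_{ext}$'', but the stated hypothesis is $u\le M$ only on $\Q_1$, while the support of $\partial_t\psi$ lies in the earlier time slab $(-1-\eta^{2},-1)$, disjoint from $\Q_1$. A pointwise bound on $h$ therefore does not follow from the representation formula under the hypothesis as written; in the actual argument (and in every application of the lemma in \cite{GI,AR-harnack}) one works with functions bounded by $M$ on the whole of $\Q_{ext}$ by construction, and the paper's remark about controlling $h$ ``through the $L^{\infty}$ norm of $u$'' should be read in that light. Finally, a small slip: from $h\le(1-\vartheta_0)M$ the pointwise consequence is $(u-(1-\vartheta_0)M)_+\le w_+\le|w|$, not $(u-\vartheta_0 M)_+\le|w|$; this is only a relabelling of $\vartheta_0$.
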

\noindent
The notation we consider here needs to be intended in the sense of \eqref{normW}. 
In particular, we have that $L^2H^{-1}(\Q_{ext})$ is short for 
\begin{equation*}
	L^2(B_{2^3 R}\times \ldots\times B_{2^{2\kappa+1} R}\times(-1-\eta^2,0],H^{-1}_{x^{(0)}}(B_{2 R})),
\end{equation*}
 where we split $x=\big(x^{(0)},x^{(1)},\ldots, x^{(\kappa)}\big)$ according to \eqref{split.coord.RN}.

\subsection{Sobolev inequality}

Here, we give proof to a Sobolev inequality for functions belonging to $\W$. We refer the interested reader for further Sobolev-type embeddings for kinetic Kolmogorov equations to~\cite{PP22} and to~\cite{Pes22} for interpolations results in the non Euclidean geometrical setting of Kolmogorov equations.

Now, in order to prove our desired Sobolev inequality we firstly recall its classical formulation for functions belonging to 
the space $H^1_{x^{\small (0)}}(\Om)$, that we earlier introduced in
\eqref{h1-def} alongside with its norm \eqref{h1-norma}. 
In order to do this, we recall that when $2 < m_0$ we may introduce the Sobolev exponent
\begin{equation*}
	2^* = \frac{2m_0}{m_0 - 2}, \quad \text{such that } \, \frac{1}{2^*} = \frac12 - \frac{1}{m_0}, \, \, 2^* > 2. 
\end{equation*}

	\begin{theorem}[Corollary 9.14 of \cite{Brezis}] \label{sob}
		Let $\Om$ be a bounded open subset of $\mathds{R}^{m_0}$
		with $C^{1}$ boundary and $m_0 > 2$.
		If $u \in H^1_{x^{\small (0)}}(\Om)$, then $u \in L^{q}(\Om)$
		with $q \in [2, 2^*]$, and the following estimate holds
		\begin{align*}
			\| u \|_{L^{q}(\Om)} \le C \| D_{m_0}u \|_{L^2(\Om)},
		\end{align*}
		where $C$ is a constant only depending on $m_0$ and $\Oo$. 
	\end{theorem}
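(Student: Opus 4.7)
The plan is to reduce the estimate to the classical Gagliardo-Nirenberg-Sobolev inequality on the whole space $\R^{m_0}$, and then transfer it to the bounded domain $\Om$ via an extension operator available since $\partial \Om$ is of class $C^1$.

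The first step would be the inequality for $u \in C^\infty_c(\R^{m_0})$. For each coordinate direction $i$, the fundamental theorem of calculus yields $|u(x)| \leq \int_{-\infty}^{x_i} |\partial_{x_i} u|\, \dd s$. Taking the product over $i = 1, \dots, m_0$ and raising to the power $1/(m_0-1)$ gives
\begin{equation*}
|u(x)|^{\frac{m_0}{m_0-1}} \leq \prod_{i=1}^{m_0} \left( \int_\R |\partial_{x_i} u|\, \dd s \right)^{\frac{1}{m_0-1}}.
\end{equation*}
Integrating iteratively in $x_1, \dots, x_{m_0}$ and applying the generalized H\"older inequality with $m_0 - 1$ factors at each step yields the $L^1$-endpoint Sobolev inequality $\| u \|_{L^{1^*}(\R^{m_0})} \leq C\| D_{m_0} u \|_{L^1(\R^{m_0})}$ with $1^* = m_0/(m_0-1)$. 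To pass from $p = 1$ to $p = 2$, I would apply this inequality to $|u|^\gamma$ with the choice $\gamma = 2(m_0-1)/(m_0-2)$; then one verifies that $\gamma\, m_0/(m_0-1) = 2^*$ and $2(\gamma-1) = 2^*$, so that H\"older's inequality on the right hand side isolates $\| D_{m_0} u \|_{L^2}$, and after dividing out the factor $\| u \|_{L^{2^*}}^{\gamma - 1}$ the desired estimate
\begin{equation*}
\| u \|_{L^{2^*}(\R^{m_0})} \leq C \| D_{m_0} u \|_{L^2(\R^{m_0})}
\end{equation*}
follows for all $u \in C^\infty_c(\R^{m_0})$.

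Next I would transfer the estimate to $\Om$. Since $\Om$ is a bounded $C^1$ domain, there exists a bounded linear extension operator $E : H^1_{x^{(0)}}(\Om) \to H^1(\R^{m_0})$ with $Eu = u$ in $\Om$ and $\| Eu \|_{H^1(\R^{m_0})} \leq C \| u \|_{H^1_{x^{(0)}}(\Om)}$. Multiplying $Eu$ by a smooth cutoff equal to one on a neighborhood of $\overline{\Om}$ gives a compactly supported function in $H^1(\R^{m_0})$, which can be approximated in the $H^1$ norm by elements of $C^\infty_c(\R^{m_0})$; applying the first step to each approximant and passing to the limit yields $\| u \|_{L^{2^*}(\Om)} \leq C \| u \|_{H^1_{x^{(0)}}(\Om)}$. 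For the remaining exponents $q \in [2, 2^*]$ the estimate follows by H\"older interpolation $\| u \|_{L^q} \leq \| u \|_{L^2}^\theta \| u \|_{L^{2^*}}^{1-\theta}$ together with the inclusion $L^{2^*}(\Om) \hookrightarrow L^q(\Om)$ available on bounded domains. The right hand side in the statement is to be read as the full $H^1$ norm, which reduces to $\| D_{m_0} u \|_{L^2(\Om)}$ up to a multiplicative constant whenever $u$ has vanishing trace or vanishing mean, via Poincar\'e's inequality.

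The delicate step is the passage from $\R^{m_0}$ to $\Om$: both the density of $C^\infty(\overline{\Om})$ in $H^1_{x^{(0)}}(\Om)$ and the existence of a bounded linear extension operator rely critically on the $C^1$ regularity of $\partial \Om$, and are classically obtained by flattening the boundary through a partition of unity argument. By contrast, the core Gagliardo-Nirenberg step on $\R^{m_0}$ is conceptually clean thanks to the iterated H\"older computation. The main obstacle therefore lies in the boundary regularity argument underlying the extension step, rather than the analytic heart of the inequality itself.
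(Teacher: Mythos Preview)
The paper does not prove this statement at all: it is quoted verbatim as Corollary~9.14 of Brezis's textbook and used as a black box in the proof of the subsequent Theorem~\ref{sob-nostra}. Your sketch is precisely the standard textbook argument (Gagliardo--Nirenberg on $\R^{m_0}$ via the $L^1$ endpoint and the substitution $|u|^\gamma$, then transfer to the domain by a $C^1$ extension operator), so there is nothing to compare in terms of approach.

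You are right to flag that the inequality as written, with only $\|D_{m_0}u\|_{L^2(\Om)}$ on the right, cannot hold for arbitrary $u\in H^1_{x^{(0)}}(\Om)$: constants kill it. Brezis's Corollary~9.14 in fact has the full $H^1$ norm on the right, and that is also what the proof of Theorem~\ref{sob-nostra} actually needs (it is applied to the mean $(u)_{y,t}$, which has no reason to have zero trace or zero average). So your reading of the right-hand side as the full $H^1$ norm is the correct one, and the display in the paper should be regarded as a typographical slip rather than a genuine strengthening.
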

	
Then, by following the approach proposed in \cite{AP-boundedness}, 
we prove a new Sobolev embedding for functions belonging to $\W$. 
It is our belief that, as in the nonlinear nonlocal kinetic framework discussed in \cite{AP-boundedness} and later
on subject of Section \ref{nnkfp}, the following embedding may lead to an improvement in
the study of the De Giorgi-Nash-Moser weak regularity theory, see Section \ref{weak-reg}.
\begin{theorem}
\label{sob-nostra}
	Let~$\O= \O_{N-m_0+1} \times \O_{m_0}$
	be a bounded open subset of $\R^{N+1}$,
	where $\O_{m_{0}}$ is a bounded $C^1$ domain of $\R^{m_{0}}$ and 
	$\O_{N-m_{0} + 1}$ is a bounded Lipschitz domain of $\R^{N - m_{0} + 1}$.
	 Let $u \in \W$ and $m_0 > 2$. Then for every $q \in [2,2^*]$ the following inequality holds
	 \begin{align*}
		&\int_{\O_{m_0}} \left| \,  \dashint_{\O_{N-m_0+1}}  u (x^{(0)},y,t) \, \dd y \, \dd t \right|^q \dd x^{(0)} \\*
		&\qquad
		\le  C  \left( \int_{\O_{m_0}} \dashint_{\O_{N-m_0+1}}  | D_{m_0} u (x^{(0)}, y,t)|^2 
			\, \dd y \, \dd t \, \dd x^{(0)} \right)^{\frac{q}{2}}.
	\end{align*}
\end{theorem}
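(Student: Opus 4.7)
The plan is to reduce the statement to a direct application of the classical Sobolev embedding of Theorem~\ref{sob} to the $(y,t)$-average of $u$. Concretely, I would introduce
$$v(x^{(0)}) := \dashint_{\O_{N-m_0+1}} u(x^{(0)}, y, t) \, \dd y \, \dd t,$$
which I regard as a function of $x^{(0)} \in \O_{m_0}$. Since the claimed inequality involves the $\mathcal{W}$-norm only through its $L^2H^1_{x^{(0)}}$ component, and since $C^\infty(\overline{\O})$ is dense in $\mathcal{W}$ by definition of the norm \eqref{normW}, it suffices to prove the estimate for $u \in C^\infty(\overline{\O})$ and then pass to the limit at the end.

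For $u \in C^\infty(\overline{\O})$, differentiation under the integral sign commutes with the averaging, giving
$$D_{m_0} v(x^{(0)}) = \dashint_{\O_{N-m_0+1}} D_{m_0} u(x^{(0)}, y, t) \, \dd y \, \dd t,$$
and Jensen's inequality applied on $\O_{N-m_0+1}$ equipped with normalized Lebesgue measure yields the pointwise bound
$$|D_{m_0} v(x^{(0)})|^2 \le \dashint_{\O_{N-m_0+1}} |D_{m_0} u(x^{(0)}, y, t)|^2 \, \dd y \, \dd t.$$
Integrating over $\O_{m_0}$ thus controls $\|D_{m_0} v\|_{L^2(\O_{m_0})}^2$ by precisely the quantity appearing on the right-hand side of the target inequality.

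Since $m_0 > 2$, $\O_{m_0}$ is a bounded $C^1$ domain of $\R^{m_0}$, and $v \in C^1(\overline{\O_{m_0}}) \subset H^1_{x^{(0)}}(\O_{m_0})$, Theorem~\ref{sob} applies and gives
$$\|v\|_{L^q(\O_{m_0})} \le C \, \|D_{m_0} v\|_{L^2(\O_{m_0})}, \qquad q \in [2, 2^*].$$
Raising this inequality to the $q$-th power and substituting the previous Jensen bound for $|D_{m_0} v|^2$ yields the desired estimate for smooth $u$.

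The remaining step is the removal of the smoothness assumption by density. Choosing $u_n \in C^\infty(\overline{\O})$ with $u_n \to u$ in $\mathcal{W}$, both sides of the inequality pass to the limit: the right-hand side thanks to the $L^2$-convergence of $D_{m_0} u_n$, and the left-hand side because Cauchy--Schwarz, together with the boundedness of $\O_{N-m_0+1}$, makes the averaging operator $u \mapsto v$ continuous from $L^2(\O_{N-m_0+1}; H^1_{x^{(0)}})$ into $H^1_{x^{(0)}}(\O_{m_0})$, hence into $L^q(\O_{m_0})$. The main (and rather mild) obstacle is precisely this last continuity check; note also that the Lie derivative $Yu$ plays no role in the argument, which is consistent with the fact that the Sobolev exponent $2^*$ only captures regularity in the $x^{(0)}$-direction.
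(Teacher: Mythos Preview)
Your proposal is correct and follows essentially the same approach as the paper: define the $(y,t)$-average of $u$, use Jensen's inequality to control its $H^1_{x^{(0)}}$-norm by the right-hand side, and apply the classical Sobolev embedding (Theorem~\ref{sob}). The only difference is that you include an explicit density argument to justify commuting $D_{m_0}$ with the average, whereas the paper works directly with $u \in L^2(\O_{N-m_0+1};H^1_{x^{(0)}})$ and leaves this commutation implicit.
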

\begin{proof}
	Let~$u \in L^2(\O_{N-m_0+1};H^1_{x^{(0)}} (\O_{m_0}))$. We define the mean of~$u$ 
	with respect to the variables~$y$ and~$t$ as
	\begin{equation*}
		(u)_{y,t}(x^{(0)}):= \dashint_{\Onm} u(x^{(0)},y,t) \, \dd y \, \dd t.
	\end{equation*}
	Then,~$(u)_{y,t} \in H^1_{x^{(0)}} (\O_{m_0})$. Indeed, by integral's monotonicity property and Jensen's Inequality, we have
	\begin{align*}
		&\int_{\Om}  |(u)_{y,t} (x^{(0)}) |^2 \dd x^{(0)} \le 
		\int_{\O_{m_0}} \dashint_{\O_{N-m_0+1}}  |u(x^{(0)}, y,t) |^2 \, \dd y \, \dd t \, \dd x^{(0)} , \quad \text{and} \\
		&\int_{\Om}  | D_{m_0} (u)_{y,t} (x^{(0)}) |^2 \dd x^{(0)} \le
		\int_{\Om}  \dashint_{\O_{N-m_0+1}} | D_{m_0} u (x^{(0)},y,t) |^2 \, \dd y \, \dd t \, \dd x^{(0)} .
	\end{align*}
	Now, let $q \in [2,2^*]$. Then, by applying Theorem~\ref{sob} to~$(u)_{y,t}$ we get
	\begin{align*}
		\int_{\O_{m_0}} |(u)_{y,t} (x^{(0)})|^q \dd x^{(0)} 
		&\le C  \left( \int_{\O_{m_0}} | D_{m_0} (u)_{y,t} (x^{(0)}) |^2 \dd x^{(0)} \right)^{\frac{q}{2}} \\
		&\le  C  \left( \int_{\O_{m_0}} \dashint_{\O_{N-m_0+1}}  | D_{m_0} u (x^{(0)}, y,t)|^2 
			\, \dd y \, \dd t \, \dd x^{(0)} \right)^{\frac{q}{2}}.
	\end{align*}
\end{proof}

	\begin{rem}
	We observe that in the case~$m_{0} \le 2$ an analog result of Theorem~\ref{sob-nostra} can be proved. Indeed, when~$m_{0} =1$ the function~$(u)_{y,t} \in H^1_{x^{(0)}}(\Om)$ is absolutely continuous, whereas the case when~$m_{0}=2$ can be treated via the Rellich-Kondrachov embedding since~$(u)_{y,t} \in H^1_{x^{(0)}}(\Om) \subset L^q(\Om)$ for any~$q \in [2,\infty)$.
    \end{rem}

\section{De Giorgi-Nash-Moser weak regularity theory}
\label{weak-reg} 
The extension of the De Giorgi-Nash-Moser weak regularity theory to the class of ultraparabolic equations in divergence form of type \eqref{defL} had been an open problem for decades. 
A first breakthrough in this direction was obtained by Pascucci and Polidoro in \cite{PP}, where the authors proved the Moser's iterative scheme for 
\textit{strong weak solutions}, i.e. weak solutions $u \in L^2$ such that $D_{m_0} u, Y u \in L^2$. Then, later on, this result was extended to the nondilation invariant case 
in \cite{APR,CPP}. 

Based on these local boundedness results, the local H\"older continuity for strong weak solutions was later on addressed by Wang and Zhang in \cite{WZ4, WZ3} for the specific case of 
Kolmogorov-Fokker-Planck equation \eqref{kfp}. Subsequently, the procedure was extended to the ultraparabolic setting in the unpublished paper \cite{WZ-preprint}. 
The method considered in these works is based on the combination of Sobolev and Poincaré inequalities constructed for 
strong weak solutions, alongside with qualitative properties of a suitably chosen $G$ function. Then the local H\"older continuity is recovered by providing an estimate of the oscillations
following Kruzkhov's level set method \cite{Kruzhkov}.

In recent years, the interest of the community began to focus on the extension of these regularity results to weak solutions 
belonging to the space $\W$, that are defined as follows.
\begin{defn}\label{weak-sol2}
A function $u \in \W$ is a weak solution to \eqref{defL} with source term $f \in L^2(\O)$ if for every non-negative test function $\phi \in \D(\O)$, we have
\begin{align}\label{kolmo}
	   \int_{\O} - \langle A Du, \dd\phi \rangle - uY\phi   + \langle b , Du \rangle \phi + c u \phi= \int_{\O} f \phi.
	\end{align}
In the sequel, we will also consider weak sub-solutions to \eqref{defL}, namely functions $u \in \W$ that satisfy the following inequality
\begin{align}\label{kolmo-sub}
	   \int_{\O} - \langle A Du, \dd\phi \rangle - uY\phi + \langle b , Du \rangle \phi + c u \phi  \overset{(\leq)}{\geq} \int_{\O} f \phi,
	\end{align}
	for every non-negative test function $\phi \in \D(\O)$. A function $u$ is a super-solution to \eqref{defL} if
	it satisfies \eqref{kolmo-sub} with $(\leq)$.
\end{defn}

A first attempt in this direction is represented by the seminal paper \cite{GIMV}, where the authors propose a non 
constructive proof of a Harnack inequality for weak solutions to the kinetic Kolmogorov-Fokker-Planck equation \eqref{kfp}. 
This approach
is based on classical energy estimates and apriori fractional estimates proved in \cite{Bouchut}. Driven by the aim of simplifying and extending the proof proposed in \cite{WZ-preprint}, various authors recently 
suggested alternative proofs both for the Harnack inequality and the H\"older continuity in the kinetic setting, 
see \cite{GI, GM, IS-weak, Silvestre1, Silvestre2, Zhu}. 

It was only recently that the weak regularity theory was extended to the ultraparabolic case in \cite{AR-harnack}
by two of the authors, and their main results can be stated after the introduction of these two sets:
\begin{align*}
	\Q_+= \delta_\omega \left( \widetilde \Q_1\right) &= B_{\omega} \times B_{\omega^3} \times 
	\ldots \times B_{\omega^{2\k+1}} \times (- \omega^2 , 0] \quad \text{and} \\
	\widetilde \Q_- = (0, \ldots, 0, - 1 + 2 \rho^2) \, \circ \, &\delta_\rho \left( \Q_1\right) 
	=  B_{\rho} \times B_{\rho^3} \times 
	\ldots \times B_{\rho^{2\k+1}} \times (-1 + \rho^2, -1 +2 \rho^2).
\end{align*}
\begin{theorem}[Harnack inequality]
	\label{harnack-thm}
	Let $u$ be a non-negative weak solution to $\L u = f$ in 
	$\O\supset \widetilde \Q_1$ under the assumptions \textbf{(H1)}-\textbf{(H2)}-\textbf{(H3)}. 
	Then	 we have
	\begin{eqnarray*}
		\sup \limits_{\widetilde \Q_{-}} u \, \le \, C \left( \inf_{\Q_+} u + \Vert f \Vert_{L^q(\Q^{0})} \right),
	\end{eqnarray*}
	where $0 < \omega < 1$ is given by Theorem \ref{weak-harnack} and
	$0 < \rho < \frac{\omega}{\sqrt2}$. 
	Finally, the constants $C$, $\omega$, $\rho$ only depend on the homogeneous dimension $Q$ defined in 
	\eqref{hom-dim}, $q$ and on the ellipticity constants $\lambda$ and $\Lambda$ 
\end{theorem}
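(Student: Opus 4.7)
The plan is to derive the Harnack inequality in the now-classical way by combining two one-sided estimates: a local boundedness estimate (Moser-type $L^\infty$ bound) for non-negative weak subsolutions, and a weak Harnack inequality (already invoked in the statement as Theorem~\ref{weak-harnack}) for non-negative supersolutions. Writing these as
\begin{equation*}
\sup_{\widetilde \Q_-} u \;\le\; C\Bigl(\Vert u\Vert_{L^p(\Q^{*})} + \Vert f\Vert_{L^q(\Q^0)}\Bigr),
\qquad
\Vert u\Vert_{L^p(\Q^{*})} \;\le\; C\Bigl(\inf_{\Q_+} u + \Vert f\Vert_{L^q(\Q^0)}\Bigr),
\end{equation*}
for a suitable common intermediate cylinder $\Q^*$ and some exponent $p>0$, one concatenates them to conclude. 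The parameters $\omega$ and $\rho$ in the statement are precisely chosen so that $\Q^*$ fits simultaneously into the geometry provided by the local boundedness iteration and into the domain of applicability of the weak Harnack.

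\textbf{Step 1: Local boundedness of subsolutions.} Since $u$ is in particular a subsolution, I would run a Moser iteration on the truncated powers $u_+^\beta$, testing the weak formulation \eqref{kolmo-sub} against $\phi=\zeta^2 u_+^{2\beta-1}$ for a suitable cut-off $\zeta$ adapted to the slanted cylinders $\Q_r$. The drift term $Yu$ is handled by integration by parts against $\phi$, producing a boundary-in-time contribution controlled by the $\zeta$; the lower-order terms $b\cdot D u$ and $cu$ are absorbed using \textbf{(H3)} and Young's inequality; the source $f$ enters through its $L^q$ norm with $q>(Q+2)/2$. Applying the new Sobolev embedding of Theorem~\ref{sob-nostra} (or the existing ones in~\cite{PP,APR,CPP} for strong weak solutions) to the gain provided by the diffusion part then yields a reverse Hölder-type self-improving inequality, and standard iteration on a sequence of shrinking slanted cylinders gives
\begin{equation*}
\sup_{\widetilde \Q_-} u \;\le\; C\bigl(\Vert u\Vert_{L^p(\Q^*)} + \Vert f\Vert_{L^q(\Q^0)}\bigr),
\end{equation*}
for any $p>0$. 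The left-invariance of $\L_0$ under $\circ$ and the homogeneity of $\Vert\cdot\Vert$ under $\delta_r$ make the iteration work uniformly at scale $\rho$.

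\textbf{Step 2: Weak Harnack for supersolutions.} This is essentially Theorem~\ref{weak-harnack} applied to $u$ viewed as a non-negative supersolution. Its proof (which one would have to recall but not re-derive) passes through a De Giorgi-type measure-to-pointwise argument based on the weak Poincaré inequality of Theorem~\ref{weak-poincare} combined with the boundedness from Step~1 (needed because the Poincaré inequality only holds for bounded functions). The output is that for a suitable small $p>0$ and an appropriately positioned intermediate cylinder $\Q^*$,
\begin{equation*}
\Vert u\Vert_{L^p(\Q^*)} \;\le\; C\bigl(\inf_{\Q_+} u + \Vert f\Vert_{L^q(\Q^0)}\bigr).
\end{equation*}

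\textbf{Step 3: Concatenation and choice of parameters.} Combining the two bounds gives the theorem, provided $\omega$ (from Theorem~\ref{weak-harnack}) and the free parameter $\rho<\omega/\sqrt 2$ are chosen so that (i) the cylinder $\widetilde\Q_-$ on which one wants the sup lies in the region where Step 1 applies, and (ii) the intermediate cylinder $\Q^*$ produced by both steps is the same, up to a left translation in $\mathds{K}$. I expect \emph{this matching of cylinders in the non-commutative group structure} to be the main obstacle: unlike the heat equation, $\widetilde\Q_-$ is not simply a time-translate of $\Q_+$, and one must use \eqref{cylindereq} together with the left-invariance $\L_0\circ \ell_\z=\ell_\z\circ\L_0$ to carry out the required chaining. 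A secondary technical nuisance is that the Poincaré inequality is stated only for bounded $u\in\W$, so its use inside the iteration scheme is conditional on the $L^\infty$ bound from Step~1 being available at each scale; the usual remedy, which I would adopt here, is to first truncate $u$ at level $M:=\sup_{\widetilde\Q_-} u$, prove the inequality for $\min(u,M)$, and then let $M$ be determined \emph{a posteriori} by the Moser bound on a slightly enlarged cylinder.
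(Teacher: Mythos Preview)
Your proposal is correct and follows the same route the paper describes: the Harnack inequality is obtained by concatenating the $L^2$--$L^\infty$ boundedness estimate for subsolutions (Theorem~\ref{boundedness}) with the weak Harnack inequality for supersolutions (Theorem~\ref{weak-harnack}), the latter resting on the weak Poincar\'e inequality (Theorem~\ref{weak-poincare}). The paper does not present a self-contained proof here---it is a survey that refers to \cite{AR-harnack}---but its outline of the argument matches your Steps~1--3, including the remark that the geometry of the slanted cylinders and the group law~$\circ$ must be handled carefully when positioning $\widetilde\Q_-$, $\Q_+$ and the intermediate cylinder.

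One small discrepancy worth flagging: the boundedness step (Theorem~\ref{boundedness}) in the paper is proved via the fundamental-solution representation of \cite{PP,CPP} (Subsection~\ref{fun-sol-s}), not via the Sobolev embedding of Theorem~\ref{sob-nostra}; the paper mentions the latter only as a possible future improvement. Also, Theorem~\ref{boundedness} as stated gives the sup--$L^\beta$ bound for subsolutions only with exponent $p\ge 1$, so to match the small exponent $p$ produced by Theorem~\ref{weak-harnack} you need the standard interpolation/covering lemma (of Giaquinta--Giusti type) to lower the exponent---this is routine but should be made explicit in Step~3.
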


\begin{theorem}[H\"older regularity]
	\label{local-holder}
	There exists $\a \in (0,1)$ only depending on dimension $Q$, $\l$, $\Lambda$ such that all weak solutions 
	$u$ to \eqref{defL} under assumption \textbf{(H1)}-\textbf{(H2)}-\textbf{(H3)} in $\O \supset \Q_{1}$ satisfy
	\begin{equation*}
		[ u ]_{C^{\a} (Q_{\frac12})} \, \le C \left( \| u \|_{L^{2}(\Q_{1})} + \| f \|_{L^{q}(\Q_{1})} \right),
	\end{equation*}
	where the constant $C$ only depends on the homogeneous dimension $Q$ defined in \eqref{hom-dim}, $q$
	and the ellipticity constants $\l$ and $\Lambda$.
\end{theorem}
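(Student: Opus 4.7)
The plan is to derive the H\"older estimate from the Harnack inequality (Theorem \ref{harnack-thm}) by the classical oscillation-decay plus iteration scheme (\`a la Krylov--Safonov/Moser), performed on the slanted cylinders of $\mathds{K}$. As a preliminary I invoke the local boundedness bound recalled at the beginning of Section \ref{weak-reg}, giving $\|u\|_{L^\infty(\Q_{3/4})} \leq C\bigl(\|u\|_{L^2(\Q_1)} + \|f\|_{L^q(\Q_1)}\bigr)$, so that $u$ may be assumed bounded. Then I fix $z_0 \in \Q_{1/2}$ and $r \in (0,1/4]$ with $\Q_r(z_0) \subset \Q_1$, and set $M_r := \sup_{\Q_r(z_0)} u$, $m_r := \inf_{\Q_r(z_0)} u$. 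Since $Y$ and $\div(A D\,\cdot)$ annihilate constants, the non-negative functions $v_1 := u - m_r$ and $v_2 := M_r - u$ satisfy $\L v_1 = f - c m_r$ and $\L v_2 = -f + c M_r$, whose sources are bounded in $L^q$ by $F_r := \|f\|_{L^q(\Q_r(z_0))} + \|u\|_{L^\infty(\Q_1)}\|c\|_{L^q(\Q_r(z_0))}$. Rescaling via $\tilde u(\tilde z) := u(z_0 \circ \delta_r(\tilde z))$, the dilation-invariance of $\L_0$ from \textbf{(H2)} yields a new equation of the same structural type with identical ellipticity constants, and a direct change of variables gives $\|\tilde f\|_{L^q(\widetilde \Q_1)} = r^\theta \|f\|_{L^q(\Q_r(z_0))}$ (and similarly for $\tilde c$, while $\tilde b$ gains a factor $r$), with $\theta := 2 - (Q+2)/q > 0$ thanks to \textbf{(H3)}. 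Applying Theorem \ref{harnack-thm} separately to the rescaled $\tilde v_1, \tilde v_2$ and summing the two, using $\tilde v_1 + \tilde v_2 \equiv M_r - m_r$, one gets after rearrangement
\begin{equation*}
\osc_{\widetilde \Q_-} \tilde u \, + \, C \osc_{\Q_+} \tilde u \; \leq \; (C-1)(M_r - m_r) \, + \, C\, r^{\theta} F_r,
\end{equation*}
so, discarding the non-negative $\osc_{\widetilde \Q_-}$ and setting $\gamma := (C-1)/C \in (0,1)$, the oscillation-decay estimate
\begin{equation*}
\osc_{\Q'} u \; \leq \; \gamma\, \osc_{\Q_r(z_0)} u \, + \, C\, r^{\theta} F_r
\end{equation*}
holds on a subcylinder $\Q' \subset \Q_r(z_0)$ of radius $\omega r$, with $\omega$ the constant appearing in Theorem \ref{harnack-thm}.

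Iterating the previous bound on a nested chain $\Q_1 \supset \Q^{(1)} \supset \Q^{(2)} \supset \ldots$ of cylinders of radii $\omega^k$ (each inclusion guaranteed by \eqref{cylindereq}) and summing the geometric contribution of the source terms (the series converges because $\theta > 0$ and $\omega \in (0,1)$), one obtains $\osc_{\Q^{(k)}} u \leq C \mu^k \bigl(\|u\|_{L^\infty(\Q_1)} + F_1\bigr)$ with $\mu := \max(\gamma, \omega^{\theta}) \in (0,1)$. For any $z, \z \in \Q_{1/2}$, choosing the integer $k$ with $\omega^{k+1} \leq d(z,\z) < \omega^k$ gives $|u(z) - u(\z)| \leq \osc_{\Q^{(k)}} u \leq C\, d(z,\z)^{\alpha}\bigl(\|u\|_{L^2(\Q_1)} + \|f\|_{L^q(\Q_1)}\bigr)$ with $\alpha := -\log\mu / \log(1/\omega) \in (0,1)$, which is precisely the required H\"older bound after absorbing $\|u\|_{L^\infty(\Q_1)}$ via the preliminary boundedness estimate.

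\textbf{The hardest part} I expect is the cylinder bookkeeping during the iteration: each Harnack step produces the smaller cylinder $\Q_+$ whose base-point sits at the top-centre of the previous cylinder, so producing a nested chain that covers an arbitrary pair of target points in $\Q_{1/2}$ requires carefully combining the group left-translation on $\mathds{K}$ with the inclusion \eqref{cylindereq}, together with a short covering argument tying $d(z,\z)$ to the discrete scale $\omega^k$. A secondary but genuinely delicate point is the $\|u\|_{L^\infty}$ factor inside $F_r$, which is precisely why the local boundedness result must be invoked upstream, and why the iteration closes only after the preliminary $L^\infty$ bound is incorporated into the constants.
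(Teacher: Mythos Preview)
Your proposal is correct. Note, though, that the paper itself does not prove Theorem~\ref{local-holder}: it states the result and attributes the argument to \cite{AR-harnack}, describing the method there as the Guerand--Imbert scheme built on the weak Poincar\'e inequality (Theorem~\ref{weak-poincare}), the $L^2$--$L^\infty$ estimate (Theorem~\ref{boundedness}) and the \emph{weak} Harnack inequality (Theorem~\ref{weak-harnack}). In that route the oscillation decay is extracted directly from the weak Harnack inequality applied to $u-m_r$ and $M_r-u$, and the full Harnack inequality (Theorem~\ref{harnack-thm}) is obtained as a separate corollary of weak Harnack plus boundedness, not used as an input for H\"older continuity. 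You instead take the full Harnack inequality as a black box and run the classical Krylov--Safonov oscillation argument; this is perfectly valid and arguably more transparent once Theorem~\ref{harnack-thm} is available, whereas the paper's ordering has the advantage that H\"older regularity already follows from the one-sided estimate. Your handling of the constant shift (the modified source $f - c\,m_r$ controlled via $|m_r|\le\|u\|_{L^\infty}$ and the preliminary boundedness bound), the scaling exponent $\theta = 2-(Q+2)/q>0$ from \textbf{(H3)}, and the oscillation arithmetic leading to $\gamma=(C-1)/C$ are all correct. The cylinder-bookkeeping issue you flag is real but standard: since the $\Q_r(z_0)$ are past cylinders, one centres the iteration at whichever of $z,\zeta$ has the later time coordinate and invokes \eqref{cylindereq} together with the equivalence of the homogeneous norm and the cylinder geometry to capture the other point; this is routine but does require a line of justification.
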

Note that the estimates above can be stated and scaled 
in any arbitrary cylinder $\Q_r(z_0)$. 
Furthermore, these results are comparable with the ones obtained in \cite{WZ-preprint}, 
but framework and methodology are different.
Indeed, in \cite{AR-harnack} the authors apply the technique proposed by Guerand and Imbert in \cite{GI}, already 
previously considered by Imbert and Silvestre for the Boltzmann equation in \cite{IS-weak}, 
based on the combination of a weak Poincaré inequality (Theorem \ref{weak-poincare}) 
for functions belonging to the space $\W$, with a $L^2-L^\infty$ estimate for weak sub-solutions (Theorem \ref{boundedness}) and a weak 
Harnack inequality for weak super-solutions (Theorem \ref{weak-harnack}). 
For the sake of completeness, these two tools will be briefly discussed in the following.

\subsection{Local boundedness estimates}
\label{moser}
The proof of this result is obtained via the extension of the Moser iterative scheme 
introduced in \cite{M4} for the parabolic setting and it is based on the iterative combination of a Caccioppoli and a Sobolev inequality. 
When dealing with the ultraprabolic setting of our interest, the degeneracy of the diffusion part only allows us to estimate the partial gradient $D_{m_{0}} u$ of the solution through a Caccioppoli type inequality, also known as energy estimate. Moreover, in accordance with our definition of weak solution, $u$ does not lie in a classical Sobolev space. 
Nevertheless, as firstly observed in \cite{PP}, it is true that
\begin{equation*}
	\L_0 u = \left( \L_0 - \L \right) u + f = \div_{m_{0}} \left( \left( \I_{m_{0}} - A \right) D_{m_{0}}u \right)
	+f .
\end{equation*}
Hence, as pointed out at \cite[p. 396]{PP}, it seems quite natural to consider a representation formula for solutions to \eqref{defL} in terms of the fundamental solution of the principal part operator $\L_0$ to prove a Sobolev embedding for solutions to \eqref{defL}. This is very convenient because we have at our disposal an explicit expression of the fundamental solution of $\L_0$, alongside with potential estimates for it,
see Subsection \ref{fun-sol-s}.

In literature, we find various extensions of the Moser's iterative scheme to Kolmogorov operators of the type $\L$, see for instance \cite{CPP, PP, WZ-preprint}. The most recent one is proved in \cite{AR-harnack} for the functional setting $\W$ and it reads as follows. 
\begin{theorem}\label{boundedness}
	 Let $z_0 \in \O$ and $0 < \frac{r}{2} \le \r 
	< r \leq 1$, be such that $\overline{\Q_r(z_0)}\subseteq \O$. 
	Let $u$ be a non-negative weak solution to $\L u = f$ in $\O$ under 
	assumptions \textbf{(H1)}-\textbf{(H2)}-\textbf{(H3)}.
	Then for every $p \ge 1$ there exists two positive constants $C = C \left( p,\l,\Lambda,Q,
		\parallel b \parallel_{L^q(\Q_r(z_0))},\parallel c \parallel_{L^q(\Q_r(z_0))} \right)$, such that 
	\begin{equation*}
		\sup_{\Q_{\r}(z_0)} u_l^p \, \le \, \frac{ C }{ (r -\r)^{\frac{Q+2}{\beta}} } \| u_l^p \|_{L^{\beta} (\Q_{r}(z_0)) },
	\end{equation*}
	where $\beta = \frac{q}{q-1}$, $q$ introduced in \textbf{(H3)} and $u_l := u + \| f \|_{L^q(\Q_r)}$.
	 The same statement holds true if $u$ is a non-negative weak sub-solution to \eqref{defL} for $p \ge 1$;
	if $u$ is a non-negative weak super-solution to \eqref{defL} for $0 < p < \frac12$. In particular, by choosing $p=1$, for every sub-solution to 
\eqref{defL} it holds
	\begin{equation*}
		\sup_{\Q_{\r}(z_0)} u \, \le \, \frac{ C }{ (r -\r)^{\frac{Q+2}{\beta}} } \left( \| u \|_{L^{\beta} (\Q_{r}(z_0)) } + 
		 \| f \|_{L^q(\Q_r)} \right) ,
	\end{equation*}
\end{theorem}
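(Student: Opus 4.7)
The plan is to carry out a Moser iteration adapted to the ultraparabolic setting of $\W$. The strategy rests on three ingredients: a Caccioppoli-type energy estimate for powers $u_l^p$, a Sobolev-type embedding with gain obtained via the fundamental solution representation of $\L_0$ discussed in Subsection \ref{fun-sol-s}, and a geometric iteration on a nested family of cylinders.

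First I would derive the Caccioppoli estimate. Fix a cutoff $\eta \in C^\infty_c(\Q_r(z_0))$ with $\eta \equiv 1$ on $\Q_\rho(z_0)$, $|D_{m_0}\eta| + |Y\eta|^{1/2} \lesssim 1/(r-\rho)$. Testing \eqref{kolmo-sub} against $\phi = u_l^{2p-1}\eta^2$ (after a standard truncation/regularisation to ensure admissibility as an element of $\W$), using ellipticity \textbf{(H1)} on the diffusion and integrating $Y$ by parts, while absorbing the lower-order contributions by Hölder's inequality with the $L^q$-norms of $b,c$ and the normalisation $\|f\|_{L^q}/u_l \le 1$, I would obtain
\begin{equation*}
\int_{\Q_r}|D_{m_0}(u_l^p\eta)|^2 + \big\| Y(u_l^p\eta^2) \big\|_{L^2H^{-1}(\Q_r)}^2 \,\le\, \frac{C\,p^{2}}{(r-\rho)^{2}} \int_{\Q_r} u_l^{2p}.
\end{equation*}
The sign of $2p-1$ is what enforces $p\ge 1$ for sub-solutions and $0<p<1/2$ for super-solutions, since in the latter case the sign of the inequality must be preserved when multiplying by a decreasing power.

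Next I would establish the Sobolev-type gain. The auxiliary function $v:=u_l^p\eta$ lies in $\W$ and, using the observation recalled at \cite[p.~396]{PP},
\begin{equation*}
\L_0 v = \div_{m_0}\bigl((\I_{m_0}-A)D_{m_0}v\bigr) + g,
\end{equation*}
where $g$ collects the source term, the lower-order contributions and the commutator terms produced by $\eta$. Writing $v$ as the $\Gamma$-potential of $\L_0 v$ and invoking the potential estimates of Corollary \ref{corollary} with $p=2$ (the $L^{2^{*}}$ bound for the divergence piece and the $L^{2^{**}}$ bound for $g$), one obtains a reverse-type embedding
\begin{equation*}
\|u_l^p\eta\|_{L^{2\kappa}(\Q_r)} \,\le\, C\bigl(\|D_{m_0}(u_l^p\eta)\|_{L^2(\Q_r)} + \|Y(u_l^p\eta)\|_{L^2H^{-1}(\Q_r)} + \text{l.o.t.}\bigr),
\end{equation*}
with a gain exponent $\kappa>1$ depending only on $Q$.

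Combining the two steps yields the reverse-Hölder inequality
\begin{equation*}
\|u_l\|_{L^{2p\kappa}(\Q_\rho)} \,\le\, \Bigl(\tfrac{Cp^{2}}{(r-\rho)^{2}}\Bigr)^{\!1/(2p)} \|u_l\|_{L^{2p}(\Q_r)}.
\end{equation*}
I would then iterate along the sequence $r_k=\rho+(r-\rho)2^{-k}$ with exponents $p_k=\kappa^k p_0$, $p_0=\beta/2$; summing the resulting geometric series, taking the limit $k\to\infty$ and using a standard interpolation trick (e.g.~the Young-type absorption of \cite[Lemma 6.1]{CPP}) to pass from $L^{2p_0}$ to $L^\beta$ gives the claim with the explicit exponent $(Q+2)/\beta$ on $(r-\rho)$.

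The main obstacle I expect is the rigorous handling of the Lie derivative $Yu$. Since elements of $\W$ are only in $L^{2}H^{-1}$ with respect to $Y$, the chain rule on $u^p$ is not available directly: the Caccioppoli step must be performed through a Steklov-averaging or mollification argument compatible with the group structure \eqref{grouplaw}, and the Sobolev step must be routed through the $\Gamma$-potential precisely because a direct embedding for $\W$-functions is unavailable. A second delicate point is tracking the dependence of constants on $\|b\|_{L^q}$ and $\|c\|_{L^q}$ through every Hölder step of the iteration, which is what forces the threshold $q>(Q+2)/2$ in \textbf{(H3)}.
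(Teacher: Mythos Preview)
Your proposal is correct and follows essentially the same route the paper indicates: the paper does not give a self-contained proof of Theorem~\ref{boundedness} but sketches precisely the three ingredients you list --- a Caccioppoli estimate (whose structure forces the ranges $p\ge 1$ for sub-solutions and $0<p<\tfrac12$ for super-solutions, cf.\ \cite[Remark~1.3]{PP}), a Sobolev-type gain obtained by writing $\L_0 v=\div_{m_0}((\I_{m_0}-A)D_{m_0}v)+g$ and applying the $\Gamma$-potential bounds of Corollary~\ref{corollary}, and the Moser iteration on nested cylinders --- referring to \cite{AR-harnack} for the details in the $\W$ setting. Your remarks on the need for a mollification/Steklov-type argument to justify the chain rule under $Y$ and on the role of the integrability threshold in \textbf{(H3)} are exactly the technical points that arise in that reference.
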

Firstly, we observe the above statement holds true also for weak sub and super solutions, but for a different range of $p$. 
This depends on the chosen method for the proof of the Caccioppoli type inequality, see also 
\cite[Remark 1.3]{PP}, and it is a classical feature of local boundedness results of this type, see for instance \cite{WZ-preprint}.

Moreover, this result holds true also under a less restrictive assumption on the lower order coefficients, i.e.
$c, f \in L^q_{loc}(\O)$ and $b \in \left( L^q_{loc} (\O) \right)^{m_0}$
	for some $q >\frac{3}{4}\left( Q+2\right)$ with $\div \, b \ge 0$ in $\O$.
The additional requirement on the sign of the divergence is the price to pay in order to lower the integrability requirement on $b$. Indeed, the non-standard structure of the space $\W$ is responsible for several underlying difficulties while proving a local boundedness result. Among these, we find the impossibility to lower the integrability requirements on the term $b$ up to $\frac{Q+2}{2}$, the 
hypoelliptic counterpart of the parabolic homogeneous dimension $N/2$.

It is our belief that further improvements in the integrability requirements for lower order coefficients
may be obtained by taking advantage of Theorem \ref{sob} in the method of the proof of the $L^2-L^\infty$ estimate for weak sub-solutions.

\subsection{Weak Harnack inequality}
The method of the proof of this result is an extension of the
classical one proposed in \cite{Kruzhkov} 
for the elliptic and parabolic setting, and later on followed by Guerand and Imbert in 
\cite{GI} for the Kolmogorov-Fokker-Planck equation. It is based on the combination of the fact that super-solutions 
to \eqref{defL} expand positivity along times with a suitable covering argument. Note that this method is very convenient for the study of the weak regularity theory, because it only relies on the functional structure of the space $\W$ and on the non-Euclidean geometrical setting. 
To the best of our knowledge, 
the following weak Harnack inequality is the only available result of this type for solutions to \eqref{defL} in the framework $\W$. Moreover, we underline that our statement holds true for solution or super-solutions depending on the sign of $c$. 
This is mainly due to the method of proof followed in \cite{AR-harnack}, and the extension to this result to super-solutions without any sign assumption on $c$ is still an open problem.
\begin{theorem}[Weak Harnack inequality]
	\label{weak-harnack}
	Let $R_0>0$.
	Let $\Q^0=B_{R_0}\times B_{R_0}\times \ldots \times B_{R_0} \times(-1,0] $ and let $u$ be a non-negative weak 
	solution to $\L u = f$ in $\O\supset \Q^0$ under 
	assumptions \textbf{(H1)}-\textbf{(H3)}. Then we have
\begin{eqnarray*}
\left(\int_{\Q_-}u^p\right)^{\frac{1}{p}}\leq C\left(\inf_{\Q_+} u + \Vert f \Vert_{L^q(\Q^0)}\right),
\end{eqnarray*}
where $\Q_+=B_{\omega}\times B_{\omega^3}\times\ldots \times B_{\omega^{2\kappa+1}}\times (-\omega^2,0]$ and $\Q_-=B_{\omega}\times B_{\omega^3}\times\ldots \times B_{\omega^{2\kappa+1}}\times (-1,-1+\omega^2]$. Moreover, the constants $C$, $p$, $\omega$ and $R_0$ only depend on the homogeneous dimension $Q$ defined in \eqref{hom-dim}, $q$ and on the ellipticity constants $\lambda$ and $\Lambda$. 
Additionally, if the term $c$ is of positive sign, the statement holds true also for non-negative super-solutions to \eqref{defL}.
\end{theorem}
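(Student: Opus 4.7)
The plan is to follow the Kruzhkov--De Giorgi strategy as implemented in the kinetic case by Guerand--Imbert \cite{GI}, adapted to the ultraparabolic framework. The two main ingredients are the weak Poincar\'e inequality of Theorem \ref{weak-poincare} and a Krylov--Safonov-type covering argument compatible with the dilations $\delta_r$ and the group law $\circ$. As in the proof of Theorem \ref{boundedness}, the source term is handled by passing from $u$ to $u_f := u + \|f\|_{L^q(\Q^0)}$, so in the core of the argument I may assume $f \equiv 0$ and work with a non-negative super-solution.

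The cornerstone is an \emph{expansion of positivity} lemma: there exist $\delta_0, \eta \in (0,1)$, depending only on $Q$, $q$, $\lambda$, $\Lambda$, such that every non-negative weak super-solution $u$ to $\L u = 0$ in $\O \supset \Q^0$ satisfying
\[
|\{u \ge 1\} \cap \Q_{zero}| \ge \delta_0 \, |\Q_{zero}|
\]
verifies $u \ge \eta$ on $\Q_+$. To prove this I would introduce an auxiliary function $w$, for instance a truncation of $(K - \log(u+\eps))_+$ with suitable $K, \eps > 0$, which is a bounded non-negative sub-solution --- modulo lower-order terms absorbed thanks to the integrability threshold $q > (Q+2)/2$ of \textbf{(H3)} and the Sobolev embedding of Theorem \ref{sob-nostra} --- and which vanishes on $\{u \ge 1\}$. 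Denoting $M := \|w\|_{L^{\infty}}$, the weak Poincar\'e inequality applied to $w$ then gives
\[
\|(w - \vartheta_0 M)_+\|_{L^2(\Q_1)} \le C_P\big(\|D_{m_0} w\|_{L^2(\Q_{ext})} + \|Y w\|_{L^2 H^{-1}(\Q_{ext})}\big),
\]
whose right-hand side is controlled by a Caccioppoli-type energy estimate obtained by testing the sub-solution inequality for $w$ against a suitable cut-off. Iterating along a geometric sequence of thresholds and closing with the local boundedness of Theorem \ref{boundedness} applied to a small positive power of $w$ upgrades this $L^2$ decay into the pointwise bound $u \ge \eta$ on $\Q_+$.

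Next, I would convert the expansion of positivity into a distributional bound via a stacking-of-cylinders argument. Setting $m := \inf_{\Q_+} u$, left-translations by $\circ$ and scalings by $\delta_r$ allow the lemma above to be applied on any slanted past cylinder contained in $\Q^0$: if $\{u \ge h\}$ occupies a fraction $\ge \delta_0$ of such a cylinder, then $u \ge \eta h$ on the associated forward cylinder. A Krylov--Safonov-style covering, executed in the non-Euclidean geometry of $(\R^{N+1},\circ)$, then yields
\[
\big|\{u \ge \eta^{-k}\, m\} \cap \Q_-\big| \le (1-\sigma)^k |\Q_-|, \qquad k \in \N,
\]
for some $\sigma \in (0,1)$: otherwise, expansion of positivity would produce a forward cylinder on which $u > m$, contradicting the definition of $m$. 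The conclusion $(\int_{\Q_-} u^p)^{1/p} \le C m$ follows from the layer-cake representation by choosing $p > 0$ small enough that $\sum_k (1-\sigma)^k \eta^{-kp}$ converges. The variant with $c \ge 0$ and super-solutions in place of solutions is handled identically, since the sign of $c$ enters only through the sub-solution property of the auxiliary $w$.

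The main obstacle is the expansion-of-positivity lemma: the space $\W$ controls only the partial gradient $D_{m_0}$ and interprets $Y u$ as an element of $L^2 H^{-1}$, so the Caccioppoli estimate for $w$ must be produced carefully via the duality pairing between $L^2 H^{1}$ and $L^2 H^{-1}$; in addition, absorbing the lower-order terms $\langle b, Du\rangle \phi$ and $c u \phi$ against test functions in $\W$ requires the combination of \textbf{(H3)} with the Sobolev-type embedding of Theorem \ref{sob-nostra}. The covering step is more routine once the unit-scale expansion of positivity is at hand, but it still has to respect the anisotropic dilations $\delta_r$ and the slanted shape of $z\circ\delta_r(\Q_1)$.
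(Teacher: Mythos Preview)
Your outline matches the strategy described in the paper: the weak Harnack inequality is obtained by the Kruzhkov--De Giorgi method as adapted by Guerand--Imbert \cite{GI}, combining the weak Poincar\'e inequality of Theorem~\ref{weak-poincare}, the $L^2$--$L^\infty$ estimate of Theorem~\ref{boundedness}, an expansion-of-positivity lemma, and a covering argument compatible with $\circ$ and $(\delta_r)_{r>0}$; the paper itself does not reproduce the full proof but refers to \cite{AR-harnack}. One small discrepancy: you invoke Theorem~\ref{sob-nostra} to absorb the lower-order terms, but that embedding is a \emph{new} contribution of the present survey and is not part of the argument in \cite{AR-harnack}; there the required Sobolev-type gain is obtained through the representation via the fundamental solution of $\L_0$ and the potential estimates of Corollary~\ref{corollary}, so your sketch should route the treatment of $b$, $c$, $f$ through those tools instead.
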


\subsection{Applications of the Harnack inequality}
It is widely known that invariant Harnack inequalities are one of the most powerful tools in regularity theory. In this subsection, we 
briefly discuss some of the most important applications of Theorem \ref{harnack-thm}, when considering Kolmogorov operators with rough coefficients.

First of all, invariant Harnack inequalities can be used to construct Harnack chains. A Harnack chain connecting any two given 
starting point $z_0$ and ending point $z_k$ of our domain is a set $\{ z_0, z_1, \ldots, z_k \}$ of points of our domain such that there exist $k$ positive
constants $C_1, \ldots, C_k$ for which 
\begin{equation*}
	u(z_j) \le C_j u(z_{j-1}), \quad j=1, \ldots, k
\end{equation*}
for every non-negative solution to $\L u = f$ in $\O$. Hence, a Harnack chain is a set of points through which the quantitative information provided by a Harnack
inequality is able to travel. This tool had widely been employed over the years, especially in combination with techniques from control theory and optimization, for the 
study of qualitative properties of classical solutions to $\L u = f$, where the regularity of the coefficients was assumed to be equal, or better than H\"older continuous,
see for instance \cite{LP}. 

Ever since Theorem \ref{harnack-thm} had been proved, various qualitative results regarding bounds for the weak fundamental solution \cite{AR-funsol, LPP}
where established adapting those techniques to the rough coefficients case. Moreover, we recall the work \cite{AEP}, where the authors prove a geometric characterization 
for the set where the Harnack inequality holds true and strong maximum principle in the Kolmogorov-Fokker-Planck case. Note that, since the proof of this two latter results 
is based on Harnack chains, control theory and an invariant Harnack inequality, their proof straightforwardly applies also to the more general case \eqref{defL} of our interest.

\section{Applications to Physics \& Economics}
\label{applications}
In this section, we provide the reader with a motivation for the study of the class of operators \eqref{defL}, by illustrating some applications of Kolmogorov equations (and associated regularity results) to 
real life problems arising in various research fields, such as Economics and Physics. In particular, we first address applications to option pricing, with a specific focus on American and Asian options. We then present new results regarding a relativistic Kolmogorov-Fokker-Planck equation. Finally, we analyze the link between the linear Kolmogorov-Fokker-Planck equation \eqref{kfp} and the Boltzmann equation.

\subsection{American and Asian options}
In mathematical finance, equations of the form \eqref{defL} appear in various models for the pricing  
of 
financial instruments, such as Asian and American options (cf., for instance, \cite{Barucci, PascucciBook}), as well as in the theory of stochastic utility \cite{Antonelli} and stochastic volatility models \cite{Hobson}. Here, we focus on applications of \eqref{defL} to Asian and American options, and we present an overview of the most recent results in this direction. For a more comprehensive analysis of applications of Kolmogorov operators $\L$ to finance and stochastic theory we refer to the monograph \cite{PascucciBook} by Pascucci. 

Asian options are a family of \emph{path-dependent derivatives}, whose payoff depends on the average of the underlying stock 
price over a certain time interval. In the Black \& Scholes framework, the price of the underlying Stock $S_t$ and of the bond $B_t$ are described by the processes
\begin{equation*}
 S_t = S_0 e^{\mu t + \sigma W_t}, \qquad B_t = B_0e^{rt}, \qquad 0 \le t \le T, 
\end{equation*}
where $\mu, r, T$, and $\sigma$ are given constants. In this setting, the price $(Z_t)_{0 \le t \le T}$ of a continuous path-dependent option is a function $Z_{t} = Z(S_{t},A_{t},t)$ that depends on the \emph{price of the stock} $S_t$, on the \emph{time to maturity} $t$ and on an \emph{average} $A_t$ of the stock price
\begin{equation*} \label{At}
   A_{t} \, = \, \int \limits_{0}^{t} f(S_{\t}) \, \dd\t, \qquad t \in [0, T],
\end{equation*}
and it is computed by solving the following Cauchy problem 
\begin{equation} \label{PDE1}
	\begin{cases}
		\tfrac12 \s^{2} S^{2} \tfrac{\p^{2} Z}{\p S^{2}} + f(S) \tfrac{\p Z}{\p A} + r \left( S \tfrac{\p Z }{\p S} - Z \right) +
		 \frac{\p Z}{\p t} = 0 & (S, A, t) \in \R^{+} \times \R^{+} \times (0, T), \\
		Z(S,A,T) = \phi (S, A)   &(S, A) \in \R^{+} \times \R^{+}.
	\end{cases}
\end{equation}
We remark that the initial data $\phi$ in \eqref{PDE1} corresponds to the pay-off of the option. Moreover, depending on the choice of the function $f(S)$, we find a different Kolmogorov type equation with locally H\"{o}lder continuous coefficients. In \cite{AMP} the first author, Muzzioli and Polidoro, prove the existence and uniqueness of the fundamental solution associated to Geometric Average Asian Options, i.e. when $f(S)=\log(S)$, and Arithmetic Average Asian Options, i.e. when $f(S)=S$ in this framework.
 
On the other hand, an American option with pay-off $\psi$ is a contract which grants the holder the right to receive the payment of the sum $\psi(X_t)$ at a chosen time $t \in [0,T]$, where $X=\left(X_t^x \right)$ is a $N$-dimensional diffusion process which solves the stochastic differential equation
\begin{equation*}
d X_t^x=B X_t^x dt + \sigma(t,X_t^x)d W_t,
\end{equation*}
with $X_{t_0}^{t_0,x}=x$
for $(x,t_0)\in \R^{N}\times [0,T]$ and where, as usual, $(W_t)_{t \geq 0}$ denotes a $n$-dimensional Wiener process, $1 \leq n \leq N$.

In particular, equation \eqref{defL} is relevant in connection to the problem of determining the arbitrage-free price of American options.
Indeed, there are significant classes of American options, whose corresponding diffusion process $X$ is associated to Kolmogorov-type operators which are not uniformly parabolic and are of the kind \eqref{defL}. Two such examples are given by American style options (c.f. \cite{Barucci}) and by the American options priced in the stochastic volatility introduced in the article \cite{Hobson}. Moreover, in virtue of the classical arbitrage theory (see, for instance {\cite{PascucciBook}}), the arbitrage free price at time $t$ of the American option when assuming the risk-free interest rate is zero is given by the following optimal stopping problem
\begin{equation*}\label{optimal}
u(x,t)=\sup_{\tau \in [t,T]}E \left[ \psi \left( X_\tau^{t,x} \right) \right],
\end{equation*}
where the supremum is taken over all stopping times $\tau \in [t,T]$ of $X$. In \cite{Pascucci}, it is proved that the function $u$ in \eqref{optimal} is a solution to the obstacle problem
\begin{align}
		\label{obstaclePP}
		\begin{cases}
			\max \lbrace  \L u -f, \psi -u\rbrace \qquad &(x,t) \in \R^{N} \times [0,T] \\
			u(x,t) = g  \qquad &(x,t) \in \R^{N} \times \lbrace 0 \rbrace,
		\end{cases}
	\end{align}
where $\L$ is the operator \eqref{defL} in trace form, the obstacle $\psi$ corresponds to the pay-off of the option and it is a Lipschitz continuous function in $\Ob$ satisfying a weak convexity condition with respect to the variables $x_1, \ldots, x_n$ (see \cite[Assumption H4.]{obstacle-ex}).

In virtue of its importance in finance, the mathematical study of the obstacle problem \eqref{obstaclePP} was already initiated in the papers \cite{obstacle-ex, obstacle-2, obstacle-3}. More precisely, the main result of \cite{obstacle-ex} is the existence of a strong solution to problem \eqref{obstaclePP} in certain bounded cylindrical domains and in the strips $\R^{N} \times [0,T]$ through the adaptation of a classical penalization technique. On the other hand, the main purpose of papers \cite{obstacle-2, obstacle-3} is to prove some new regularity results for solutions to \eqref{obstaclePP}. In particular, \cite{obstacle-2} concerns the optimal interior regularity for solutions to the problem \eqref{obstaclePP}, while \cite{obstacle-3} contains new results regarding the regularity near the initial state for solutions to the Cauchy-Dirichlet problem and to \eqref{obstaclePP}. 

However, all the results contained in \cite{obstacle-ex, obstacle-2, obstacle-3} only hold true for strong solutions and continuous obstacles satisfying the aforementioned convexity condition. Only very recently, the first and the third author initiated in \cite{AR-obstacle} the study
of obstacle problems associated to Kolmogorov operators in a more general and natural setting, i.e. by considering weak solutions to the obstacle problem related to
 \begin{equation*}\label{fokkerplanck}
\K u (v,x,t):=\nabla_v \cdot \left( A(v,x,t) \nabla_v u (v,x,t) \right)   + v \cdot \nabla_{x} u(v,x,t) - \partial_t u (v,x,t),
\end{equation*} 
in the functional space $\W$ introduced in Section \ref{preliminaries-f}. 
Specifically, in a standard manner (see 
\cite[Chapter 6]{SK}), in \cite{AR-obstacle} it is assumed that obstacle $\psi$ and boundary data $g$ inherit the same regularity of the function 
$u$, namely $\psi \in \W(\O_v \times \O_{xt})$ and $g \in \W(\O_v \times \O_{xt})$, where $\O := \O_v \times \O_{xt}$ is a subset of $\R^{2n+1}$ satisfying the following assumption:
\begin{itemize}
	\item[(D)] $\O_v \subset \R^n$ is a bounded Lipschitz domain 
	and $\O_{xt} \subset \R^{n+1}$ is a bounded domain with 
	$C^{1,1}$-boundary, i.e. $C^{1,1}$-smooth with respect to the transport operator $Y$ as well as $t$.
	\end{itemize}
Thanks to this assumption, it is possible to introduce the outward-pointing unit normal $N$ to $\O_{xt}$ and to classically define the 
	Kolmogorov boundary of the set $\Omega$ as
	\begin{equation*}
		\p_K ( \O_v  \times \O_{xt} ) := 
		\left( \p \O_{v} \times \O_{xt} \right) \cup 
		\left\{ (v,x,t) \in \overline \O_v \times \p \O_{xt} \, | \, \, 
		(v, -1) \cdot N_{xt} > 0 \right\},
	\end{equation*}
which serves in the context of the operator $\L$ as the natural hypoelliptic counterpart of the parabolic boundary considered in the context of Cauchy-Dirichlet 
problems for uniformly parabolic equations. 
In comparison with \cite{obstacle-ex, obstacle-2, obstacle-3, Pascucci}, in \cite{AR-obstacle} the authors weaken the regularity assumptions on the right-hand 
side by considering  $f \in L^2(\O_{xt}, H^{-1}(\O_v))$. Furthermore, the following more general obstacle problem than \eqref{obstaclePP} is considered
	\begin{align}
		\label{obstacle}
		\begin{cases}
			\K u(v,x,t) = f(v,x,t) \qquad &(v,x,t) \in \O, \\
			u(v,x,t) \ge \psi(v,x,t)  \qquad &(v,x,t) \in \O, \\
			u(v,x,t) = g  \qquad &(v,x,t) \in \p_K \O,
		\end{cases}
	\end{align}
	where the boundary condition needs to be considered as attained in the sense of traces, the obstacle condition holds in $\W(\O_v \times \O_{xt})$
	and 
	\begin{equation*}
		\psi \le g \, \, \, \text{on } \p_K (\O_{v} \times \O_{xt})
		\quad \text{in } \, \, \W(\O_v \times \O_{xt}).
	\end{equation*}
Finally, the condition $\K u(v,x,t) = f(v,x,t) $ for $(v,x,t) \in \O$ appearing in \eqref{obstacle} needs to be interpreted as stating that
\begin{align}\label{weakformu}
			0 = 
			 \iiint \limits_{\O_{v} \times \O_{xt}} A(v,x,t) \nabla_v u \cdot \nabla_v \phi 
				\, \dd v \, \dd x \, \dd t +
			     \iint \limits_{\O_{xt}} \langle f (\cdot, x,t) - Y u (\cdot, x,t) | \phi(\cdot, x,t) \rangle 
			     \, \dd x \, \dd t     
		\end{align}
		for every $\phi \in L^2(\O_{xt}, H^1_c(\O_v))$ and where $\langle \cdot | \cdot \rangle$ is the standard duality pairing
		in $H^{-1}(\O_v)$.		
Then, the main result of \cite{AR-obstacle} is the following.
\begin{theorem}
		\label{main-thm}
		Let us assume that the diffusion matrix $A$ in \eqref{fokkerplanck} satisfies the ellipticity condition in {\bf (H1)} for $m_0=n$. 		
		Let $f \in L^2(\O_{xt}, H^{-1}(\O_v))$ and
		$g, \psi \in \W(\O_v \times \O_{xt})$, where $\O$ is a subset of $\R^{2n+1}$ satisfying assumption (D).
		Then there exists a unique weak solution $u \in \W(\O_{v} \times \O_{xt})$
		in the sense of equation \eqref{weakformu}
		to the obstacle problem \eqref{obstacle}. 
		Moreover, there exists a constant $C$, which only depends on $d$ and on $\O_{v} \times \O_{xt}$, such that
		\begin{equation*}
		\| u \|_{\W( \O_{xt} \times \O_{v})} \leq C 
		\left( \| g \|_{\W( \O_{xt} \times \O_{v})}+\| f \|_{L^2(\O_{xt}, H^{-1}(\O_v))}\right).
		\end{equation*}				
	\end{theorem}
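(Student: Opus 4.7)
The plan is to prove Theorem \ref{main-thm} by the classical Bensoussan--Lions penalization strategy, adapted to the hypoelliptic space $\W$ and to the Kolmogorov boundary structure encoded in assumption (D). First I would set $w := u - g$ to reduce to homogeneous boundary data, transforming the problem into finding $w$ in the closed convex set
$$
\mathcal{K} := \{\, \zeta \in \W(\O_v \times \O_{xt}) \, : \, \zeta \geq \psi - g \text{ in } \O, \ \zeta = 0 \text{ on } \p_K\O \,\},
$$
which is non-empty by the compatibility $\psi \leq g$ on $\p_K\O$ assumed in the statement. Writing $\tilde f := f - \K g \in L^2(\O_{xt};H^{-1}(\O_v))$ and $\tilde\psi := \psi - g$, the obstacle problem is equivalent to the variational inequality: find $w \in \mathcal{K}$ such that
$$
\iiint_\O A\nabla_v w \cdot \nabla_v (\phi - w) \, \dd v\, \dd x\, \dd t - \iint_{\O_{xt}} \langle Yw, \phi - w\rangle\, \dd x\, \dd t + \iint_{\O_{xt}}\langle \tilde f, \phi - w\rangle\, \dd x\, \dd t \geq 0
$$
for every $\phi \in \mathcal{K}$.

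Next I would produce approximate solutions $w_\e \in \W$ with $w_\e = 0$ on $\p_K\O$ to the penalized linear equation
$$
\K w_\e = \tilde f - \frac{1}{\e}(w_\e - \tilde\psi)^-, \qquad \e > 0,
$$
whose solvability is obtained via a Galerkin scheme combined with Lions' existence theorem for non-coercive evolution problems. The transport part $Y$ is controlled through the Green-type identity, valid for any $\zeta \in \W$ with zero Kolmogorov trace,
$$
\iint_{\O_{xt}}\langle Y\zeta, \zeta\rangle \, \dd x\, \dd t = \frac{1}{2}\int_{\p\O_{xt}}\int_{\O_v} \zeta^2 \, (v, -1)\cdot N_{xt}\, \dd v\, \dd\sigma \leq 0,
$$
since on $\p\O_{xt}\setminus\p_K\O$ the normal component $(v,-1)\cdot N_{xt}\leq 0$ by the very definition of $\p_K\O$. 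Testing the penalized equation against $w_\e$ and exploiting \textbf{(H1)} yields a uniform $L^2(\O_{xt}; H^1(\O_v))$-bound on $w_\e$; testing against $(w_\e - \tilde\psi)^-$ (admissible because $\tilde\psi \leq 0$ on $\p_K\O$) produces the penalty bound $\e^{-1/2}\|(w_\e - \tilde\psi)^-\|_{L^2} \leq C$. The equation itself then gives a uniform $L^2(\O_{xt};H^{-1}(\O_v))$-bound on $Y w_\e$, closing the uniform $\W$-estimate.

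By weak compactness of $\W$, along a subsequence $w_\e \rightharpoonup w$ in $\W$; the penalty bound forces $w \geq \tilde\psi$ almost everywhere, and the zero Kolmogorov trace is preserved as a closed-subspace condition. For any $\phi \in \mathcal{K}$, the weak form of the penalized equation tested against $\phi - w_\e$ reads
$$
\iiint_\O A\nabla_v w_\e \cdot \nabla_v(\phi - w_\e)\, \dd v\, \dd x\, \dd t - \iint_{\O_{xt}}\langle Yw_\e, \phi - w_\e\rangle\, \dd x\, \dd t + \iint_{\O_{xt}}\langle \tilde f, \phi - w_\e\rangle\, \dd x\, \dd t \geq 0,
$$
since $(w_\e - \tilde\psi)^-(\phi - w_\e) \geq 0$ whenever $\phi \geq \tilde\psi$. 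Passing to the limit by weak lower semicontinuity of the quadratic Dirichlet form gives the variational inequality for $w$. For uniqueness, if $w_1, w_2 \in \mathcal{K}$ both solve the variational inequality, I would use $w_2$ as test in the inequality for $w_1$ and vice-versa, then sum, obtaining
$$
\iiint_\O A\nabla_v(w_1 - w_2)\cdot\nabla_v(w_1 - w_2)\, \dd v\, \dd x\, \dd t - \iint_{\O_{xt}}\langle Y(w_1-w_2), w_1-w_2\rangle\, \dd x\, \dd t \leq 0.
$$
The Green identity renders the $Y$-integral non-positive, so coercivity from \textbf{(H1)} forces $\nabla_v(w_1-w_2)=0$, and the zero Kolmogorov trace then yields $w_1 = w_2$ via a Poincar\'e-type inequality. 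The desired $\W$-norm estimate finally follows by choosing $\phi = 0 \in \mathcal{K}$ in the variational inequality for $w$ and translating back through $u = w + g$.

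The main obstacle I anticipate is rigorously establishing the Green-type identity on $\p_K\O$ for $\W$-functions with prescribed Kolmogorov trace: this requires a trace theorem for $\W$ on domains satisfying (D), where the natural trace space is a weighted $L^2$ space on $\p_K\O$ with weight $|(v,-1)\cdot N_{xt}|$ dictated by the characteristic field of the transport operator $Y$. Such a theory is available in the kinetic literature but is technically delicate. Once the trace machinery is in place, the Galerkin solvability of the penalized equation, the passage to the weak limit, and the monotonicity argument for uniqueness all proceed in standard fashion.
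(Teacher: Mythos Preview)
The paper does not actually prove this theorem: it is quoted as the main result of the cited preprint \cite{AR-obstacle}, and no argument is supplied in the present survey. What the paper does say, immediately following the statement, is that in \cite{AR-obstacle} the authors recast the obstacle problem as the search for a null minimizer of the uniformly convex functional \eqref{functional-intro}, in the spirit of the variational method of \cite{AM} for the unconstrained Dirichlet problem. That is a genuinely different route from your Bensoussan--Lions penalization scheme.

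Your approach is classical and, subject to the trace machinery you rightly single out as the main obstacle, should go through. The variational null-minimizer approach has the advantage that uniform convexity delivers existence and uniqueness simultaneously, and one can largely avoid a delicate Green-identity analysis on the characteristic part of the boundary; indeed, the trace theory for $\W$ on domains satisfying (D) is precisely what \cite{AM,LN} had to develop, and the paper emphasizes that this functional setting is nonstandard. Conversely, your penalization route makes the role of the sign condition defining $\p_K\O$ completely explicit in the energy estimates and is closer in spirit to the strong-solution treatment in \cite{obstacle-ex}, which the paper mentions used ``a classical penalization technique''. One point to watch in your sketch: the Green-type identity you invoke for arbitrary $\zeta\in\W$ with vanishing Kolmogorov trace is exactly where the difficulty is concentrated---making that boundary term rigorous for functions that are merely in $\W$ (as opposed to smooth) requires the weighted-$L^2$ trace theorem you allude to, and the paper's own remark that ``it is still an open problem whether it is possible to employ classical tools from Calculus of Variations'' on the functional \eqref{functional-intro} signals that none of this is routine in the hypoelliptic setting.
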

We eventually point out that with \cite{AR-obstacle} the authors aim at initiating the study of the obstacle problem \eqref{obstacle} in the framework of Calculus of Variations, 
by rewriting the problem of finding a solution to \eqref{obstacle} as that of finding a null minimizer of the functional
\begin{multline}
		\label{functional-intro}
		 \inf \Big\{  \,\, \iiint \limits_{\O_v \times \O_{xt}} \frac12 
		\left( A \left( \nabla_v u - \mathfrak{J} \right) \right) \cdot
					\left( \nabla_v u - \mathfrak{J} \right) \, \dd v \, \dd x \, \dd t \,: \\
		  \mathfrak{J} \in \left( L^2(\O_{xt};L^2(\O_v)) \right)^n \, \, s. \, t. \, \, \nabla_v \cdot \left( A \mathfrak{J} \right) = f - Yu \Big\},
	\end{multline}
It is clear that the infimum in \eqref{functional-intro} is non-negative	and that, given a solution $u$ to \eqref{obstacle}, if we choose $\mathfrak{J}=\nabla_v u$, then \eqref{functional-intro} vanishes at $u$. Moreover, it is easy to show that the functional in \eqref{functional-intro} is uniformly convex and attains its minimum at zero. Finally, we observe that the functional justifies the definition of functional kinetic space given in Section \ref{preliminaries-f}. However, it is still an open problem whether it is possible to employ classical tools from Calculus of Variations to study the variational problem associated to functional \eqref{functional-intro}.

\subsection{Relativistic Fokker-Planck equation}
As pointed out in Example \ref{kfp-ex}, the class of Kolmogorov operators \eqref{defL} arises in many Physical applications. For this reason, we consider equation \eqref{kfp-nofriction} in the framework of special relativity, namely
%
\begin{equation} \label{oprel}
    \L u (p,x,t) =  {\sqrt{|p|^{2} + 1}}\, \nabla_p \cdot \left( D \, \nabla_p u\right) 
    - p \cdot \nabla_x u - \sqrt{|p|^{2} + 1} \, \p_t u = 0,
\end{equation}
where $(p,x,t) \in \R^{2n+1}$ and $D$ is the \textit{relativistic diffusion matrix} given by 
\begin{equation*}
 D=\frac{1}{\sqrt{|p|^{2} + 1}}\left( \mathds{I}_n+ p\otimes p \right).
\end{equation*}
Here and in the following, $\mathds{I}_n$ denotes the $n \times n$ identity matrix and 
$$
p\otimes p = \left( p_i p_j \right)_{i,j=1, \dots, n}.
$$ 
In this context, a solution $u=u(p,x,t)$ to \eqref{oprel} denotes the density of particles in the phase space with momentum $p$ and position $x$, at time $t$.
Equation \eqref{oprel} is a generalization which agrees with special relativity of the frictionless kinetic Fokker-Planck equation \eqref{kfp-nofriction}, as 
the \emph{relativistic velocity}
\begin{equation*}
v =\frac{p}{\sqrt{|p|^2+1}},
\end{equation*} 
clearly satisfies
\begin{equation*}
	\left\vert \frac{p}{\sqrt{|p|^{2} + 1}}\right\vert < 1 \quad \text{for every} \ p \in \R^n,
\end{equation*}
in accordance with the relativity principles\footnote{Here, we adopt a natural unit system with $c=1$, where $c$ is the speed of light.}.

Operator $\L$ in \eqref{oprel} serves as a suitable relativistic version of $\K_0$ in \eqref{kfp-nofriction} as it preserves some relevant properties which hold in the non-relativistic setting (we refer the reader to \cite{AC} for a more rigorous derivation of equation \eqref{oprel}). In particular, operator $\L$ satisfies the relativistic analogue of property \eqref{inv-K}, i.e. it is invariant under Lorentz transformations. Taking advantage of this property, in \cite{relativistico} Polidoro and two of the authors costructed the invariance group of $\L$ by defining the composition law as follows
\begin{align}\label{group-law-lorentz}
	(p_0,x_0,t_0) \circ_{\mathcal{L}} (p,x,t)= \Big( &p\sqrt{|p_0|^2+1}+p_0 \sqrt{|p|^2+1}, \\ \nonumber
				&x_0+x\sqrt{|p_0|^2+1}+p_0t, t_0+t\sqrt{|p_0|^2+1}+p_0 \cdot x  \Big ).
\end{align}
We remark that for small velocities $\sqrt{1+|p_0|^2} \approx 1$ and therefore \eqref{group-law-lorentz} becomes precisely the non-relativistic composition law \eqref{galilean} for variables $p$ and $x$. The introduction of the composition law \eqref{group-law-lorentz} and consequently of an appropriate non-Euclidean structure on the space $\R^{2n +1}$ significantly simplifies the study of the regularity of operator $\L$.

As with the non-relativistic operator in \eqref{kfp-nofriction}, $\L$ is a strongly degenerate differential operator, since only second order derivatives with respect to the momentum variable $p \in \R^n$ appear. However, the first order part of $\L$ induces a strong regularizing property, namely $\L$ is hypoelliptic.  
Indeed (see \cite[Appendix A]{relativistico}) we can write $\L$ as a \textit{sum of squares plus a drift term}
\begin{equation*} 
	\L := \sum_{j=1}^n X_j^{2} + X_{n+1},
\end{equation*}
with
\begin{equation*} 
 X_j = \sum_{k=1}^{n} \left( \delta_{jk} +\tfrac{p_j p_k}{1 + \sqrt{|p|^{2} + 1}}\right) \tfrac{\p  }{ \p p_k}, 
 \quad j=1, \dots, n, \quad  \text{and} \quad X_{n+1} = \sum_{k=1}^{n} c_k(p) X_k - Y, 
\end{equation*}
where $c_1, \dots c_n$ are smooth functions and 
\begin{equation*} 
		Y =  p \cdot \nabla_{x} + {\sqrt{|p|^{2} + 1}} \, \tfrac{\p}{\p t}.
\end{equation*}
Moreover, as shown in \cite[Appendix A]{relativistico}, $\L$ satisfies H\"ormander's rank condition \eqref{hormander} at every point $(p,x,t) \in \R^{2n+1}$.

It is then natural to consider the relativistic analogous of \eqref{langevin}, namely
\begin{equation}\label{RSDE}
\begin{cases}
    & P_{s} = p_{0} + \sqrt{2} \int \limits_0^s \sqrt{P_\tau^2+1}\, dW_{\tau}, \\
	& X_{s} = x_{0} + \int \limits_{0}^{s} P_{\tau} \dd\tau,\\
	& T_{s} = t_{0} + \int \limits_{0}^{s} \sqrt{P_\tau^2+1}\, \dd\tau,
\end{cases}
\end{equation}
where the third component is the time, which is not an absolute quantity in the relativistic setting. It is clear that \eqref{oprel} is the relativistic deterministic equation describing the density of the stochastic process \eqref{RSDE}. The main result of \cite{relativistico} provides us with a lower bound for such a density function and can be stated as follows for $(p,x,t)\in \R^3$.
\begin{theorem} \label{boundsL}
Let $\Gamma$ be the fundamental solution of $\L$ in \eqref{oprel}. Then for every $T > 0$ there exist three positive constants $\theta, c_T, C$ with $\theta < 1$, such that 
\begin{equation*}
	\Gamma (p_1,x_1,t_1;p_0,x_0,t_0) \ge \frac{c_T}{(t_1 - t_0)^{2}} \exp \left\{ - C \, \Psi \left(p_1, x_1, y_1; p_0, x_0, \theta^2t_0 + (1 - \theta^2)t_1 \right)  \right\}
\end{equation*}
for every $(p_0,x_0,t_0), (p_1,x_1,t_1) \in \R^3$ such that $0 < t_1 - t_0 < T$. The constants $\theta$ and $C$ only depend on $\L$, while $c_T$ also depends on $T$. Moreover, $\Psi$ is the \emph{value function} of a suitable optimal control problem (see \cite[Section 3]{relativistico}).
\end{theorem}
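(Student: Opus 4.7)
The plan is to follow the by-now classical control-theoretic route to Gaussian lower bounds for hypoelliptic kernels, adapted to the Lorentz-invariant setting of $\L$. The starting observation is that $\L = \sum_{j=1}^n X_j^2 + X_{n+1}$ satisfies H\"ormander's rank condition at every point, so that the fundamental solution $\Gamma$ exists, is smooth off the diagonal, and is strictly positive wherever the sub-Riemannian control system associated to $(X_1,\ldots,X_n,X_{n+1})$ is controllable from $(p_0,x_0,t_0)$ to $(p_1,x_1,t_1)$. The Lorentz group law \eqref{group-law-lorentz} is left-invariant for $\L$, hence I will first reduce to starting from the origin by replacing $(p_1,x_1,t_1)$ with $(p_0,x_0,t_0)^{-1}\circ_{\mathcal L}(p_1,x_1,t_1)$; this is the relativistic analogue of the translation reduction used in \cite{LPP, AR-funsol}.

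Next, I would prove a \emph{one-step} pointwise lower bound: there exist a small cylinder $\Q$ (in the Lorentz-invariant geometry) and a constant $c_0>0$ such that $\Gamma(z;\zeta)\ge c_0$ whenever $z,\zeta$ lie in suitable sub-cylinders with $\zeta$ below $z$ in time. This is obtained by applying the invariant weak Harnack inequality of Theorem \ref{weak-harnack} to $\Gamma(\cdot\,;\zeta)$, viewed as a non-negative solution of $\L u=0$ away from $\zeta$, combined with the mass-preservation property $\int\Gamma(\cdot\,;\zeta)=1$ to guarantee that the $L^p$-mean appearing in the weak Harnack inequality is bounded below on a controlled cylinder. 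The Lorentz dilations and translations then make this local estimate scale- and base-point-independent.

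The heart of the argument is the \emph{Harnack chain} connecting $(p_0,x_0,\theta^2t_0+(1-\theta^2)t_1)$ to $(p_1,x_1,t_1)$ along a quasi-optimal trajectory of the control problem whose value function is $\Psi$. Given an admissible control $\varphi=(\varphi_1,\ldots,\varphi_n)$ steering the sub-Riemannian ODE $\dot z(s)=\sum_{j=1}^n\varphi_j(s)X_j(z(s))+X_{n+1}(z(s))$ from the shifted starting point to the endpoint on the time interval $[t_0+\theta^2(t_1-t_0)/?,t_1]$ (with the buffer $(1-\theta^2)(t_1-t_0)$ allowing us to reach the correct final time), I discretize the trajectory into $k$ steps and apply the local lower bound at each step. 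The parameter $\theta<1$ is chosen so that the intermediate cylinders remain in the region where the one-step estimate holds. This yields $\Gamma\ge c_0^k\cdot (\text{volume factor})$, where $k$ is proportional to the energy of the control. Optimizing over admissible controls and passing to the infimum recovers the value function $\Psi$ in the exponent, producing $\exp(-C\Psi)$; the prefactor $(t_1-t_0)^{-2}$ comes from the homogeneity of the scaling $\delta_r(p,x,t)=(rp,r^2x,r^2t)$ in the relativistic setting (formally matching the non-relativistic Kolmogorov kernel, cf.\ \cite{LPP}).

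The main obstacle I anticipate is twofold. First, the Lorentz dilations are not group homomorphisms of $(\R^{2n+1},\circ_{\mathcal L})$ in the strict sense used for Kolmogorov operators with polynomial drift, so the interplay between the scaling $\delta_r$ and the group law \eqref{group-law-lorentz} must be handled carefully when translating and rescaling the local Harnack cylinder along the chain. Second, and more delicately, identifying the \emph{exact} exponent with the value function $\Psi$ of the control problem rather than just with an equivalent quasi-distance requires a matching upper bound on the minimal number of Harnack steps via energy estimates for $\varphi$; this is the step where the precise choice of $\theta$ and of the time shift $(1-\theta^2)t_1$ in the statement becomes crucial, as it ensures enough free time to realize a near-optimal control without boundary effects at $t=t_1$.
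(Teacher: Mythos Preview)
The present paper is a survey and does \emph{not} prove Theorem~\ref{boundsL}; the result is simply quoted from \cite{relativistico}, where the full argument is carried out. There is therefore no proof in this paper to compare against.

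That said, your overall architecture---a one-step local lower bound obtained from an invariant Harnack inequality, then Harnack chains built along admissible trajectories of a control system, with the value function $\Psi$ emerging as (essentially) the minimal chain length after optimising over controls---is precisely the strategy of \cite{relativistico}, which in turn follows \cite{LPP, AR-funsol} and earlier work of Polidoro. At the level of the plan you are on target.

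There are, however, two genuine gaps in the proposal as written. First, you invoke Theorem~\ref{weak-harnack} for the one-step bound, but that result is proved for operators of the form~\eqref{defL}, i.e.\ with \emph{linear} drift $\langle Bx,\nabla_x\rangle-\partial_t$ and a diffusion matrix independent of the degenerate variables. The relativistic operator~\eqref{oprel} has drift $p\cdot\nabla_x+\sqrt{|p|^2+1}\,\partial_t$ and a $p$-dependent diffusion matrix $D$; it is not covered by Theorem~\ref{weak-harnack}, and indeed the very title of \cite{relativistico} indicates that a separate Harnack inequality, invariant under the Lorentz product~\eqref{group-law-lorentz}, must first be established for $\L$ itself. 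You cannot borrow the local input from Section~\ref{weak-reg}. Second, and this you partly acknowledge, there is \emph{no} global family of dilations leaving $\L$ invariant: the factors $\sqrt{|p|^2+1}$ obstruct exact homogeneity. Consequently your derivation of the prefactor $(t_1-t_0)^{-2}$ ``from homogeneity of the scaling $\delta_r(p,x,t)=(rp,r^2x,r^2t)$'' and your claim that the one-step estimate is scale-independent are not justified as stated. In \cite{relativistico} these points are handled through the intrinsic Lorentz geometry and the asymptotics of the control cost $\Psi$, not via a global dilation; your proposal would need to be reworked at both places to go through.
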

We point out that Theorem \ref{boundsL} only constitutes a first step towards developing a systematic and more comprehensive study of $\L$ in its appropriate framework of PDEs theory. Indeed, the purpose of \cite{relativistico} is to propose an approach that may lead to
various developments in Stochastic and Kinetic Theory, where the final aim is to extend the classical theory considered in \cite{APsurvey} to the relativistic case.

\subsection{Boltzmann equation}\label{Boltzmann}
Among the most classical applications of Kolmogorov operators to Physical modeling we find the Boltzmann operator, for which \eqref{kfp} 
represents a linearization. In ~$\R^{2n+1}$, the \ap{\it Boltzmann equation}'' reads as
\begin{equation} \label{boltzmann}
	\partial_t u(v,x,t) +v \cdot\nabla_x u(v,x,t) = \Lc(u), \quad \textrm{for}~(v,x,t) \in B_1\times B_1 \times (-1,0],
\end{equation}
where the function~$u\equiv u(v,x,t)$ is defined for any~$(v,x,t) \in \R^n \times B_1 \times (-1,0]$ and the nonlocal {\it collisional} operator~$\Lc$ is given by
\begin{equation*}
\Lc(u) := \iint_{\R^n \times  \mathds{S}^{n-1} } \big( u(v_{*}^{'}) u(v^{'}) - u(v_{*}) u(v) \big) 
	K\left( |v-v_{*}|, \cos \theta \right) \, {\rm d}v_{*} {\rm d} \sigma,
\end{equation*}
where~$v_{*}^{'}$ and~$v^{'}$ are computed using~$v$,~$v_{*}$ and~$\sigma$ via the following formulae
\begin{align*}
	v'=\frac{v+v_*}{2} + \frac{ |v-v_*| }{2}\sigma \quad \text{and} \quad 
	v'_*=\frac{v+v_*}{2} - \frac{ |v-v_*| }{2}\sigma,
\end{align*}
and where~$\theta$ is the angle measuring the deviation between~$v$ and~$v'$, whose cosinus is defined as
\begin{align*}
	\cos \theta := \frac{v+v_{*}}{|v-v_{*|}} \sigma.
\end{align*}
Equation~\eqref{boltzmann} describes the dynamics of a dilute gas. Indeed, the transport term~$\big(\partial_t + v \cdot \nabla_x\big)$ on the left-hand side of~\eqref{boltzmann}
describes the fact that particles travel in straight lines when no external force is applied, while
the diffusion term~$\Lc$ expresses the fluctuations in velocity arising from particles interactions. In this setting, the solution~$u$ represents the density of particles with position~$x$ and velocity~$v$ at time~$t$, characterizing the state of the gas in a statistical way. Furthermore, the couples~$v$,~$v_*$, and~$v^{'}$,~$v^{'}_*$ represent pre and post collisional velocities of two distinct particles, respectively. The rate by which such particles switch from initial velocities~$v$,~$v_*$ to~$v^{'}$,~$v^{'}_*$ after the collision is given by the kernel~$K$, which is usually know as {\it collision kernel}. We refer the interested reader to~\cite{IS-weak,IS} and the refernces therein for some regularity and related properties of solutions to~\eqref{boltzmann} when the collision kernel does coincide with
\begin{equation*}\label{collision_krn}
		K(r, \cos \theta) = r^\alpha b(\cos \theta), \qquad \textrm{with}~b(\cos \theta) \approx | \sin (\theta/2) |^{-(n-1)-2s},
\end{equation*}
where~$\alpha > -n$ and~$s \in (0,1)$.

\section{Nonlinear nonlocal Kolmogorov-Fokker-Planck equations}
\label{nnkfp}
In recent years  an increasing interest has been focused on the study of fractional powers of nonlinear operators, 
not only because of their appearance in many concrete models in Physic, Biology and Finance, but also because of their challenging mathematical intrinsic 
nature, mainly due to the contemporary presence of a nonlinear and a nonlocal behaviour (see for example~\cite{BV15} and the references therein).

Naturally, presenting a comprehensive treatment of the related literature is beyond the scopes of the present overview, but we still mention~\cite{DKP14,DKP16}, where amongst other results the authors prove the De Giorgi-Nash-Moser theory for (elliptic) nonlinear fractional operators modeled on the fractional $p$-Laplacian; see also the survey article~\cite{Pal18} and the references therein.
Moreover, we recall that some notable results were established even in the parabolic setting, e.~\!g.~\cite{DZZ21} for the De Giorgi-Nash-Moser theory in the superlinear case when~$p\ge2$, and the recent papers~\cite{APT22,Liao22} for  H\"older regularity for any value of the exponent~$p \in (1,\infty)$.

Hence, for the community it had been quite natural to begin considering ultraparabolic equations whose diffusion part is modeled on the fractional $p$-Laplacian, with the aim of extending the study of the regularity theory to this case. The reference model is now a nonlinear version of the Boltzmann equation~\eqref{boltzmann} defined as
\begin{equation}\label{nonloc_fokker_planck}
	\partial_t u(v,x,t) +v \cdot \nabla_x u(v,x,t)+ \mathcal{L}_{K}(u) = f(v,x,t,u),  \qquad \mbox{in}~\R^{2n+1}.
\end{equation}

In the previous display the inhomogeneity~$f: \rnn \times \mathds{R} \to \mathds{R}$ is a Carath\'eodory function satisfying the following growth condition
$$
|f(v,x,t,u)| \le c_{\rm o} |u|^{\gamma-1} + h(v,x,t),  \qquad \textrm{for a.~\!e.}~(v,x,t,u) \in \rnn\times \R,
$$
for some a positive constant~$c_{\rm o}$,~$\gamma \in (1,p]$ and a given function~$ h \in L^{\infty}_{\rm loc}(\rnn)$.

The leading operator~$\Lc$, representing the \ap diffusion " in the velocity variable, is an integro-differential operator of differentiability order~$s \in (0,1)$ and integrability exponent~$p\in (1,\infty)$, whose explicit expression is given by
\begin{equation}\label{nonloc_diff}
	\Lc (u)(v,x,t) := \lim_{\eps \to 0^+}\int_{\R^n \smallsetminus B_\eps(v)} \Ac u (v,w,x,t)K(v,w) \,\dd w,  
\end{equation}
where for the sake of readability 
$$
\Ac u (v,w,x,t) := |u(v,x,t)-u(w,x,t)|^{p-2}(u(v,x,t)-u(w,x,t)).
$$
The operator~$\Lc$ is driven by its nonsymmetric measurable kernel~$K : \mathds{R}^n \times \mathds{R}^n \rightarrow [0,+\infty)$, which satisfies, for some~$0<\lambda \le \Lambda$, the following bounds
\begin{equation}\label{main_krn}
	  \lambda|v-w|^{-n-sp} \le K(v,w) \le \Lambda |v-w|^{-n-sp}, \qquad \textrm{for a.~\!e.}~v,w \in \mathds{R}^n.
\end{equation}
We remark that operator~\eqref{nonloc_diff} is compatible with the Boltzmann diffusion term in~\eqref{boltzmann} in the linear case, when~$p=2$, and while acting on nonnegative functions.
Indeed, conditions~\eqref{main_krn} are compatible  with the Boltzmann collision kernel~\eqref{collision_krn} if the following macroscopical physical quantities 
 \begin{eqnarray*}
 M(x,t) &:=& \int_{\R^n} u(v,x,t) \, \dd v \qquad \qquad \textrm{(mass density)},\\
 E(x,t) &:=& \int_{\R^n} u(v,x,t)|v|^2 \, \dd v \qquad \, \, \textrm{(energy density)},\\
 H(x,t) &:=& \int_{\R^n} u \ln u(v,x,t) \, \dd v \qquad \, \textrm{(entropy density)},
 \end{eqnarray*}
are bounded; see for instance the statement of~\cite[Assumption~1.1]{IS-weak}. Thus, equation~\eqref{nonloc_fokker_planck} can be seen as a  generalization of~\eqref{boltzmann} and, for this reason, there is interest in studying qualitative {and regularity} properties {of its related weak} solutions.

	In the linear case when~$p=2$ many remarkable results are available, e.~\!g. H\"older regularity~\cite{Sto19},~$L^p$-estimates~\cite{CZ18,HMP19}, hypoelliptic regularity~\cite{Li14}, existence of weak solutions ~\cite{WD17} and existence, uniqueness and regularity of solutions in the viscosity sense~\cite{Imb05}. 
		
	Furthermore, we recall the recent paper~\cite{Loh22} where the author proves H\"older continuity together with some weak Harnack-type inequalities for nonnegative and apriori bounded weak solutions to fractional Kolmogorov-Fokker-Planck equations.

	However, in the more general setting when a $p$-growth is involved, the theory is completely lacking and the results contained in Theorem~\ref{loc_bdd_sol} below, proved by two of the authors in~\cite{AP-boundedness}, would be the first, serving also as first attempt in proving that weak solutions to~\eqref{nonloc_fokker_planck} enjoy some expected local properties.
	
	Moreover, Theorem~\ref{loc_bdd_sol} is new even in the linear case when~$p=2$ where boundedness is usually taken as assumption in regularity theory; see for instance the aforementioned~\cite[Theorem~1.2]{Loh22}.

 Throughout this section, we will recall some helpful properties about the underlying geometrical and fractional functional setting suitable for the study of \eqref{nonloc_fokker_planck}. Then, we recall the statement of the boundedness estimates for weak solutions to \eqref{nonloc_fokker_planck} {for any value of the integrability exponent}~$p \in (1, + \infty)$ providing the reader with a comparison with existing results in literature and with related open problems and further developments.

\subsection{Geometric and functional setting}
As in the previous sections, we denote with~$z=(v,x,t)$ points of~$\R^{2n+1}$ and  
we define a family of anisotropic dilations~$(\delta_r)_{r>0}$ on~$\R^{2n+1}$ in the following way
     $$
     	\delta_r =\textrm{diag}(r\mathds{I}_n, r^{1+sp}\mathds{I}_n,r^{sp}t), \quad \forall r >0.
    $$ 
Firstly, we observe that when $s$ tends to $1$ we recover the family of dilations introduced in \eqref{gdil} for the linear Kolmogorov-Fokker-Planck operator. Moreover, analogously as in the linear case, equation~\eqref{nonloc_fokker_planck} is homogeneous of degree~$2$ with respect to the 
dilation group~$(\delta_r)_{r>0}$ just introduced. In addition, we endow~$\R^{2n+1}$ with the product law
     \begin{equation}\label{group_law}
     	 z_0 \circ z = (v_0+v,x_0+x+tv_0,t_0+t), \qquad \forall z_0=(v_0,x_0,t_0) \in \R^{2n+1},
     \end{equation}
that is the same Galilean change of variables \eqref{galilean} considered in the linear Kolmogorov-Fokker-Planck case. This is due to 
     the fact that, geometrically speaking, the family of translations is strongly connected to the shape of the transport operator, which in this case agrees with 
     the one considered in \eqref{kfp}, i.e. given by $\p_t + v \cdot \nabla_x$.
     In this way,~$\mathds{K}:=(\R^{2n+1}, \circ)$ is a Lie group with identity element~$e:=(0,0,0)$ and inverse given by
     $$
     z^{-1}:=(-v,-x+tv,-t), \qquad \forall z=(v,x,t) \in \R^{2n+1},
     $$
     and, for sufficiently regular functions, equation~\eqref{nonloc_fokker_planck} is invariant with respect to the Lie product~\ap $\circ$''.
     
    Then, as it is done in the local framework, we introduce
     two families of fractional kinetic cylinders; one defined  starting from the aforementioned dilations and product law
     \begin{equation}
     	\label{Q-classico}
     Q_r(z_0) := \big \{ z:~| v- v_0 | < r,~| x - x_0 - (t - t_0) v_0| < r^{1 + sp},~t_0 - r^{sp} < t < t_0 \big \},
     \end{equation}
    and the other defined via a ball representation formula
     \begin{equation}\label{frac_kin_cylinder}
     	\Q_r(z_0) := B_r(v_0) \times U_r(x_0, t_0) := B_r(v_0) \times B_{r^{1+sp}}(x_0) \times  (t_0-r^{sp},t_0).
     \end{equation}
    
     All the results below are stated for cylinders defined as in~\eqref{frac_kin_cylinder} via ball representation. These cylinders turn out to be equivalent to the ones defined in~\eqref{Q-classico}.  Indeed, the following Lemma holds true; see~\cite[Lemma~2.2]{AP-boundedness}.
     \begin{lemma}
     	For every~$z_0 \in \R^{2n+1}$ and every~$r>0$, there exists a positive constant~$\vartheta$ such that 
     	\begin{equation*}
     		Q_{\frac{r}{\vartheta}} (z_0) 
     		\subset \Q_{r}(z_0) 
     		\subset Q_{r \vartheta} (z_0) .
     	\end{equation*}
     \end{lemma}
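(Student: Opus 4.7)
The two cylinders agree in the $v$-slot and the $t$-slot up to a harmless rescaling by any $\vartheta\ge 1$, so both inclusions reduce to a comparison of the conditions imposed on the middle (position) coordinate. Explicitly, $Q_r(z_0)$ enforces $|x-x_0-(t-t_0)v_0|<r^{1+sp}$ whereas $\Q_r(z_0)$ enforces $|x-x_0|<r^{1+sp}$; the discrepancy between these two quantities is precisely the shear $(t-t_0)v_0$, whose modulus is bounded by $r^{sp}|v_0|$ on both cylinders. Hence the whole proof rests on two triangle-inequality estimates.

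For the forward inclusion $\Q_r(z_0)\subset Q_{r\vartheta}(z_0)$, I pick $z=(v,x,t)\in\Q_r(z_0)$ and estimate
\begin{equation*}
|x-x_0-(t-t_0)v_0|\le|x-x_0|+|t-t_0|\,|v_0|<r^{1+sp}+r^{sp}|v_0|=r^{1+sp}\bigl(1+|v_0|/r\bigr),
\end{equation*}
so that any $\vartheta\ge 1$ satisfying $\vartheta^{1+sp}\ge 1+|v_0|/r$ suffices (the $v$- and $t$-conditions are immediate for such $\vartheta$). For the reverse inclusion $Q_{r/\vartheta}(z_0)\subset\Q_r(z_0)$, I take $z\in Q_{r/\vartheta}(z_0)$ and apply the triangle inequality in the other direction:
\begin{equation*}
|x-x_0|\le|x-x_0-(t-t_0)v_0|+|t-t_0|\,|v_0|<(r/\vartheta)^{1+sp}+(r/\vartheta)^{sp}|v_0|.
\end{equation*}
Since both summands tend to $0$ as $\vartheta\to\infty$, one may choose $\vartheta$ so large that the right-hand side is smaller than $r^{1+sp}$. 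The final $\vartheta=\vartheta(z_0,r)$ is then the maximum of the two admissible values.

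Conceptually, the statement becomes transparent once one observes that $Q_r(z_0)=z_0\circ\delta_r(Q_1(0))$ is the slanted cylinder arising from the Galilean group law \eqref{group_law}, while $\Q_r(z_0)$ is the Euclidean translate $z_0+\delta_r(\Q_1(0))$ of the same unit cube; the two representations coincide at the origin (where $Q_1(0)=\Q_1(0)$) and elsewhere differ solely by the shear term $(t-t_0)v_0$ that the composition $\circ$ introduces. There is thus no genuine obstacle to the proof: the only delicate point to be mindful of is that the constant $\vartheta$ inevitably depends on $z_0$ through $|v_0|$ and on the scale $r$, so it is not universal unless one restricts to a bounded range of velocities and scales.
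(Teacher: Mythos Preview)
Your argument is correct. The two inclusions indeed reduce to controlling the shear term $(t-t_0)v_0$ via the triangle inequality, exactly as you carry out, and the quantifier structure of the statement (``for every $z_0$ and every $r>0$ there exists $\vartheta$'') explicitly allows the dependence $\vartheta=\vartheta(|v_0|,r)$ that you identify at the end.

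As for comparison with the paper: the paper does not supply its own proof of this lemma but simply refers the reader to \cite[Lemma~2.2]{AP-boundedness}. Your elementary triangle-inequality argument is the natural route and is in the same spirit as the analogous equivalence recorded earlier in the paper for the local case (the ball representation of slanted cylinders discussed around \eqref{cylindereq}, attributed to \cite{WZ3}). Your closing remark that $Q_1(0)=\Q_1(0)$ and that the two families differ only by the Galilean shear is a clean conceptual summary of why the equivalence holds.
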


     We now introduce the fractional functional setting.  For any~$s \in (0,1)$ and~$p \in (1,\infty)$, we define the fractional Sobolev space as
     $$
     W^{s,p}(\R^n):= \Big\{ g \in L^p(\R^n): \iint_{\R^n \times\R^n} \frac{|g(v)-g(w)|^p}{|v-w|^{n+sp}}\, \dd v \, \dd w <\infty\Big\},
     $$
     and we endow it with the norm
     $$
     \|g\|_{W^{s,p}(\R^n)}:= \|g\|_{L^p(\R^n)} + \left(\iint_{\R^n \times\R^n} \frac{|g(v)-g(w)|^p}{|v-w|^{n+sp}}\, \dd v \, \dd w\right)^\frac{1}{p},
     $$
     turning it into a Banach space. In a similar way one can define the fractional Sobolev space~$W^{s,p}(\Ov)$ on an open and bounded set~$\Ov \subset \R^n$. Finally, we denote with~$W^{s,p}_0(\Ov)$ the closure with respect to~$\|\cdot\|_{W^{s,p}(\Ov)}$ of~$C^\infty_c(\Ov)$.

     The difficulty in studying operators such as~$\Lc$ lies in their own definition. Indeed, we have to deal both with their nonlinear structure as well as with their nonlocal behaviour. Thus, as natural in the fractional framework, a tail-type contribution needs to be taken in consideration in order to carefully control the long-range interactions that naturally arise in the study of fractional problems.
     In the Kolmogorov-Fokker-Planck setting, the kinetic nonlocal tail mainly considers long-range contributions associated with the 
     velocity variable appearing in the kernel.
     
     \begin{defn} \label{tail-def}
     	Let~$z_0\in \O:= \Ov \times \Ox \times (t_1,t_2) \subset \R^{2n+1}$ and~$r>0$ 
     	be such that~$B_r(v_0)\times U_{2r} (x_0, t_0) \subset  \Ov\times \Ox \times (t_1,t_2)$.
     	Let~$u$ be a measurable function on~$\R^n \times \Ox \times (t_1,t_2)$. Then for any~$r>0$ we define the {\rm kinetic nonlocal tail} of~$u$ 
     	with respect to~$z_0$, and~$r$ as
     	\begin{equation*}
     		\Tl(u;z_0,r):= \Biggl(\,r^{sp} \dashint_{U_{2r}(t_0,x_0)} \int_{\R^n \smallsetminus B_r(v_0)}\frac{|u(v,x,t)|^{p-1}\, \dd v \,\dd x \,\dd t}{|v-v_0|^{n+sp}} \Biggr)^{\frac{1}{p-1}}.
     	\end{equation*}
     	Furthermore, we define the {\rm kinetic nonlocal supremum tail} of~$u$ with respect to~$z_0$ and~$r$ as
     	\begin{equation}\label{tail}
     		\Tl_{\infty}(u;z_0,r):= \Biggl(r^{sp}\sup_{(x,t) \in U_{2r}(x_0,t_0)} \, \int_{\R^n \smallsetminus B_r(v_0)}\frac{|u(v,x,t)|^{p-1}\, \dd v}{|v-v_0|^{n+sp}}\Biggr)^\frac{1}{p-1}.
     	\end{equation}
     \end{defn} 
     The nonlocal tail of a function was firstly introduced in the aforementioned papers~\cite{DKP14,DKP16} in the elliptic setting and subsequently used to derive fine properties of weak solutions to nonlocal operator; see~\cite{Pal18} and the references therein. Various extensions in the parabolic setting can be found in~\cite{APT22,DZZ21,Kim19,Liao22,Strom19,Strom19-2}. Moreover, an analogous quantity has been defined in the sub-Riemannian framework of the $n$-dimensional Heisenberg group~$\mathds{H}^n$ in~\cite{MPPP21,PP21} and, despite its young age, used to prove some regularity result for solution to fractional (possibly nonlinear) equations; see~\cite{GLV23,Pic22}.

With this bit of notation {it is possible to} introduce the definition of weak solution to~\eqref{nonloc_fokker_planck}, {that is a 
function belonging to the space	}
$$
				\W:= \big\{ u \in L^p_{ loc}  (\R^{n+1} ;W^{s,p}(\R^n)) \,
				:(\partial_tu +v\cdot \nabla_x u) \in L^{p'}_{ loc}( \R^{n+1}; ( W^{s,p}(\R^n) )^* ) \big \}
$$
and satisfying \eqref{nonloc_fokker_planck} in the following sense.
	\begin{defn}
		\label{weak-sol-int}
		Let~$\O := \O_v \times \O_x \times (t_1,t_2) \subset \R^{2n+1}$.
		A function~$u \in \W$ is a weak subsolution (resp. supersolution) to~\eqref{nonloc_fokker_planck} in~$\Omega$ if 
		\begin{equation}\label{bdd_sup_tail}
		\Tl_\infty(u_+;z_0,r)<\infty, \qquad {\rm (} \mbox{resp.}~\Tl_\infty(u_-;z_0,r)<\infty{\rm)},
		\end{equation}
		for any~$z_0$ and~$r>0$ such that~$U_{2r} (x_0, t_0) \subset  \Ox \times (t_1,t_2)$ and~$B_r(v_0) \subset \O_v$, and
		\begin{align*} 
			&\int _{\O_x \times (t_1,t_2)} \iint _{\R^n 
				\times \R^n} \Ac  u(v,w,x,t) \left( \phi(v,x,t) 
			- \phi (w,x,t) \right) \, \dd w \, \dd v \, \dd x \, \dd t \nonumber\\
			&\quad  \ge~\text{{\rm(}$\le$, resp.{\rm)}}
			\int  _{\R^n \times \O_x \times (t_1,t_2)}  
			\left(  u\,\partial_t\phi+u\,v\cdot\nabla_x\phi + f( u) \phi   \right)(v,x,t) \, \dd v \, \dd x \, \dd t,
		\end{align*}
		for any nonnegative~$\phi \in L^p(\O_x \times (t_1, t_2); W^{s,p}_0(\O_v))$ such that~$(\partial_t\phi+v\cdot\nabla_x\phi\big) \in L^{p'}( \O_x \times (t_1, t_2); ( W^{s,p}$$(\R^n) $$)^* $$)$ and~$\supp(\phi) \Subset \Omega$. 
		
		We say~$u$ is a weak solution to~\eqref{nonloc_fokker_planck} in~$\Omega$ if 
		\begin{equation}\label{bdd_sup_tail}
			\Tl_\infty(u;z_0,r)<\infty,
		\end{equation}
		for any~$z_0$ and~$r>0$ such that~$U_{2r} (x_0, t_0) \subset  \Ox \times (t_1,t_2)$ and~$B_r(v_0) \subset \O_v$, and
        \begin{align*} 
        	&\int _{\O_x \times (t_1,t_2)} \iint _{\R^n 
        		\times \R^n} \Ac  u(v,w,x,t) \left( \phi(v,x,t) 
        	- \phi (w,x,t) \right) \, \dd w \, \dd v \, \dd x \, \dd t \nonumber\\
        	&\quad  =
        	\int  _{\R^n \times \O_x \times (t_1,t_2)}  
        	\left(  u\,\partial_t\phi+u\,v\cdot\nabla_x\phi + f( u) \phi   \right)(v,x,t) \, \dd v \, \dd x \, \dd t,
        \end{align*}
		for any~$\phi \in L^p(\O_x \times (t_1, t_2); W^{s,p}_0(\O_v))$ such that~$(\partial_t\phi+v\cdot\nabla_x\phi\big)  \in L^{p'}( \O_x \times (t_1, t_2); ( W^{s,p}(\R^n) )^*)$ and~$\supp(\phi) \Subset \Omega$. 
	\end{defn}
	
	Note that, also in this framework, the transport derivative~$(\partial_t+v\cdot\nabla_x\big) $ appearing in Definition~\ref{weak-sol-int} above  needs to be intended in the duality sense, 
	as suggested by our functional setting. Thus, our notation stands for the more formal one: 
	\begin{eqnarray*}
		\int  _{\R^n \times \O_x \times (t_1,t_2)}  u (\partial_t\phi+v\cdot\nabla_x\phi\big)   \, \dd v\,\dd x\,\dd t 
		- \int  _{\O_x \times (t_1,t_2)} \langle \big(\partial_t u+v\cdot\nabla_x u\big) \mid \phi  \rangle \,\dd x\,\dd t ,
	\end{eqnarray*}
	where~$\langle \cdot \,|\, \cdot \rangle$ denotes the standard duality pairing between~$W^{s,p}_0(\R^n)$ and $( W^{s,p}$$(\R^n) $$)^*$.

\subsection{Boundedness estimates}
The results are divided in two case depending on the range of the integrability exponent. 
    
    We start with the superlinear case when~$p\ge2$. The first main result regards the boundedness from above for weak subsolutions, from which we will later on derive the desired $L^\infty$-bound for weak solutions.
	
	\begin{theorem}\label{loc bdd}
		Let~$p \in [2,\infty)$,~$s \in (0,1)$ and let~$u$ be a weak subsolution to~\eqref{nonloc_fokker_planck} in~$\O$ according to Definition~{\rm\ref{weak-sol-int}} and~$\Q_r(z_0) \Subset  \O$. Then, for any~$\delta \in (0,1]$, it holds
		\begin{eqnarray}\label{bdd}
			\sup_{\Q_{\frac{r}{2}}(z_0)}u  &\le&  C\delta^{-\frac{n(p-1)}{sp^2}} \, \max \Big\{\Big(\, \dashint_{\Q_r(z_0)}u_+^p \,\dd v\,\dd x\,dt\Big)^\frac{1}{p},1\Big\} \\
			&&  + \delta \, \Tl_\infty \big( u_+;z_0,\frac{r}{2}\big) +    C\,\|h\|_{L^\infty(\Q_r(z_0))}^\frac{1}{p-1},\nonumber
		\end{eqnarray}
		where~$\Tl_\infty(\cdot)$ in~\eqref{tail} and~$C= C(n,p,s,c_{\rm o}, \gamma, \lambda, \Lambda)>0$.
	\end{theorem}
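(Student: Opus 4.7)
My plan is to establish this $L^\infty$ bound via a De Giorgi--Moser iteration on a sequence of shrinking cylinders, iterating truncations of $u$ from above. Throughout, I will exploit the geometric and functional setting presented in the excerpt: the left-invariance under the Galilean product~\eqref{group_law} and the rescaling~$\delta_r$ allow me to reduce to the unit cylinder~$\Q_1$ up to a covering argument, the fractional Sobolev embedding~$W^{s,p}(B_1) \hookrightarrow L^{p^\ast_s}(B_1)$ with~$p^\ast_s = np/(n-sp)$ (or any~$q>p$ when~$sp \ge n$) provides the gain in integrability along the velocity variable, and Theorem~\ref{sob-nostra} allows me to transfer this gain to averages over the degenerate variables~$(x,t)$.

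First, for every level~$k \ge 0$ I would test the weak subsolution inequality in Definition~\ref{weak-sol-int} with~$\phi = (u-k)_+ \eta^p$, where~$\eta = \eta(v,x,t)$ is a smooth cutoff adapted to a pair of nested kinetic cylinders~$\Q_\rho \subset \Q_r \subset \Q_r(z_0)$. The nonlocal term is split in the standard DKP fashion into a \emph{local part} producing, via the algebraic inequality~$|a-b|^{p-2}(a-b)(a \eta_a^p - b \eta_b^p) \gtrsim |a\eta_a - b\eta_b|^p - C|a\vee b|^p |\eta_a - \eta_b|^p$, an~$W^{s,p}$-seminorm on~$(u-k)_+ \eta$ in velocity, plus a \emph{tail part} that I estimate by~$\Tl_\infty((u-k)_+; z_0, r/2)^{p-1}$ times the mass of~$(u-k)_+\eta^p$. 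The transport term~$\int u(\partial_t\phi + v\cdot\nabla_x\phi)$ is handled by noting that~$(\partial_t+v\cdot\nabla_x)$ is a derivation with respect to the Galilean group, so writing~$(u-k)_+ \partial_Y[(u-k)_+ \eta^p] = \tfrac{1}{2}\partial_Y[(u-k)_+^2 \eta^p] + \tfrac12 (u-k)_+^2 \partial_Y(\eta^p)$ and integrating the divergence-free part produces only a lower-order cutoff contribution. Combining, I obtain a Caccioppoli inequality of the form
\begin{align*}
&\sup_{t} \int (u-k)_+^2 \eta^p + \iint_{\Q_\rho \times \Q_\rho} \frac{|(u-k)_+\eta(v) - (u-k)_+\eta(w)|^p}{|v-w|^{n+sp}}\\
&\quad \le \frac{C}{(r-\rho)^{p(1+sp)}} \int_{\Q_r}(u-k)_+^p + C \Tl_\infty(u_+;z_0,r/2)^{p-1}\int_{\Q_r}(u-k)_+ + C\|h\|_\infty^{p'}|A_k|,
\end{align*}
where~$A_k = \{u>k\} \cap \Q_r$ and I have used the structural growth of~$f$ to absorb~$|u|^{\gamma-1}$ into~$(u-k)_+^p + 1$.

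Next, I combine the Caccioppoli inequality with the fractional Sobolev embedding in~$v$ and Theorem~\ref{sob-nostra} (applied to the averages in~$(x,t)$) to obtain a reverse-H\"older type estimate: for some exponent~$\kappa > 1$,
$$
\Bigl(\dashint_{\Q_\rho}(u-k)_+^{p\kappa}\Bigr)^{1/(p\kappa)} \le \frac{C}{(r-\rho)^\theta}\Bigl(\dashint_{\Q_r}(u-k)_+^p\Bigr)^{1/p} + C\Tl_\infty(u_+;z_0,r/2)^{\frac{1}{p-1}} |A_k|^{\alpha} + C\|h\|_\infty^{1/(p-1)}.
$$
Setting~$r_j = \tfrac r2 + \tfrac r{2^{j+1}}$, $k_j = k_\infty(1-2^{-j})$ with~$k_\infty$ to be chosen, and~$Y_j = \dashint_{\Q_{r_j}}(u - k_j)_+^p$, the standard De Giorgi--Moser geometric-convergence lemma (Young's inequality on~$Y_{j+1} \le C b^j Y_j^{1+\beta}$) yields~$Y_j \to 0$ provided~$k_\infty$ dominates the right-hand side. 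Choosing~$k_\infty = C\delta^{-n(p-1)/(sp^2)}\max\{(\dashint_{\Q_r}u_+^p)^{1/p},1\} + \delta\,\Tl_\infty(u_+;z_0,r/2) + C\|h\|_\infty^{1/(p-1)}$ — where the factor~$\delta$ arises by balancing the tail contribution through Young's inequality with exponents~$p,p'$ and an appropriate choice of the split parameter — gives exactly the claimed estimate.

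The main obstacle will be the Caccioppoli step: correctly splitting the nonlocal term while keeping the tail error in the supremum-tail form~\eqref{tail} rather than the weaker averaged tail, and simultaneously controlling the transport contribution only in terms of cutoff derivatives. A secondary difficulty is the absorption of the nonlinear source~$f(u)$ with~$\gamma \le p$ — which requires the~$\max\{\cdot,1\}$ on the right-hand side and forces the additive~$\|h\|_\infty^{1/(p-1)}$ term — and the exponent~$-n(p-1)/(sp^2)$ on~$\delta$, which must be tracked carefully through the iteration exponents~$\kappa$ and~$\beta$ determined by the fractional Sobolev embedding.
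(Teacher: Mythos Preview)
The paper does not give a self-contained proof of this theorem; it is quoted from~\cite{AP-boundedness} with only a high-level description: a fractional Caccioppoli-type inequality plus an iterative scheme ``which takes in consideration the transport operator~$(\partial_t+v\cdot\nabla_x)$ alongside the diffusion term~$\Lc$''. The paper also remarks that Theorem~\ref{sob-nostra} is modelled on the approach of~\cite{AP-boundedness}, i.e.\ the integrability gain in velocity is obtained by applying a Sobolev embedding to the~$(x,t)$-average of the truncated solution. Your outline --- test with~$(u-k)_+\eta^p$, split the nonlocal form \`a la~\cite{DKP14,DKP16}, control the long-range piece via~$\Tl_\infty$, and run a De~Giorgi level-set iteration --- matches this sketch and is the correct overall route.

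Two points deserve care. First, Theorem~\ref{sob-nostra} as stated is for the \emph{local} setting: it involves the classical gradient~$D_{m_0}u$, the Hilbert exponent~$2$, and the space~$H^1_{x^{(0)}}$. You cannot invoke it verbatim in the framework of Section~\ref{nnkfp}. What you need is its fractional~$W^{s,p}$ analogue: set~$\bar w(v):=\dashint_{U_r}(u-k)_+\eta\,\dd x\,\dd t$, bound~$[\bar w]_{W^{s,p}(B_r)}$ by Jensen/Minkowski from the Caccioppoli estimate, and then apply the fractional Sobolev embedding~$W^{s,p}\hookrightarrow L^{p^\ast_s}$ in~$v$. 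The argument is the same two-line computation as in the proof of Theorem~\ref{sob-nostra}, and this is what~\cite{AP-boundedness} actually uses.

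Second, the reverse-H\"older inequality you wrote with a full~$L^{p\kappa}(\Q_\rho)$ norm on the left is stronger than what the kinetic Caccioppoli delivers directly. The fractional Sobolev gain acts only in the~$v$-variable, and while the~$\sup_t\int_{v,x}(u-k)_+^2\eta^p$ term is indeed recoverable (since~$\partial_t+v\cdot\nabla_x$ is divergence-free in~$(x,t)$ and a time cutoff produces the boundary term at~$t=\tau$), it gives no integrability gain in the degenerate~$x$-direction. Closing the De~Giorgi recursion therefore requires reformulating the level-set decay through the averaged quantity~$\bar w$ rather than through a pointwise~$L^{p\kappa}$ bound on~$\Q_\rho$; this is exactly the ``transport taken into consideration'' step alluded to in the paper, and it is the place where your sketch is vaguest.
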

	
	Moreover, as explained in~\cite[Remark~5.2]{AP-boundedness}, we may adapt the proof of Theorem~\ref{loc bdd} to obtain a lower bound analogous to~\eqref{bdd} for weak supersolutions. Besides, combining this lower bound with~\eqref{bdd}, we get the desired local boundedness whenever~$u$ is a weak solution.
	
	\begin{theorem}\label{loc_bdd_sol}
		Let~$p \in [2,\infty)$,~$s \in (0,1)$ and let~$u$ be a weak 
		solution to~\eqref{nonloc_fokker_planck} in~$\O$ according to Definition~{\rm\ref{weak-sol-int}}. Then,~$u \in L^\infty_{\loc}(\Omega)$.
	\end{theorem}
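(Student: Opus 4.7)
The plan is to exploit the fact that a weak solution to~\eqref{nonloc_fokker_planck} is simultaneously a weak subsolution and a weak supersolution, and therefore obtain both an upper and a lower bound by applying Theorem~\ref{loc bdd} twice. Since $|u| = u_+ + u_-$ pointwise, controlling both parts on every cylinder $\Q_{r/2}(z_0)$ compactly contained in $\Omega$ is sufficient to conclude $u \in L^\infty_{\loc}(\Omega)$.

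First, I would apply Theorem~\ref{loc bdd} directly to $u$ viewed as a weak subsolution. This yields the upper estimate
\begin{equation*}
    \sup_{\Q_{r/2}(z_0)} u \, \le \, C\delta^{-\frac{n(p-1)}{sp^2}} \max\Big\{\Big(\,\dashint_{\Q_r(z_0)} u_+^p \, \dd v\, \dd x\, \dd t \Big)^{\frac{1}{p}}, 1\Big\} + \delta\,\Tl_\infty(u_+; z_0, r/2) + C\,\|h\|_{L^\infty(\Q_r(z_0))}^{\frac{1}{p-1}}.
\end{equation*}
Each of the three quantities on the right-hand side is finite: $u \in \W$ implies $u \in L^p_{\loc}$, the supremum tail $\Tl_\infty(u_+; z_0, r/2)$ is controlled via \eqref{bdd_sup_tail} in Definition~\ref{weak-sol-int} together with the trivial bound $u_+ \le |u|$, and $h \in L^\infty_{\loc}(\R^{2n+1})$ by the structural assumption on the inhomogeneity $f$.

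To obtain a lower bound on $u$, I would pass to $\tilde u := -u$ and verify that it satisfies an equation of the form~\eqref{nonloc_fokker_planck} as well. The key observation is the odd symmetry of the nonlocal operator: since $\Ac u(v,w,x,t) = |u(v,x,t)-u(w,x,t)|^{p-2}(u(v,x,t)-u(w,x,t))$ is odd with respect to the replacement $u \mapsto -u$, we have $\Lc(-u) = -\Lc(u)$. Consequently, $\tilde u$ satisfies
\begin{equation*}
    \partial_t \tilde u + v \cdot \nabla_x \tilde u + \Lc(\tilde u) = -f(v,x,t,-\tilde u) =: \tilde f(v,x,t,\tilde u),
\end{equation*}
with $\tilde f$ obeying the same Carath\'eodory growth condition as $f$ (same constants $c_{\rm o}$, $\gamma$, and $h$). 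Hence $\tilde u$ is a weak subsolution of an equation of the same form, and Theorem~\ref{loc bdd} applied to $\tilde u$ produces $\sup_{\Q_{r/2}(z_0)} (-u)_+ = \sup_{\Q_{r/2}(z_0)} u_- < \infty$, with finiteness of the corresponding tail again ensured by \eqref{bdd_sup_tail} and $u_- \le |u|$. Equivalently, one can directly invoke the supersolution version of the estimate sketched in \cite[Remark~5.2]{AP-boundedness}.

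Combining the two estimates gives $\sup_{\Q_{r/2}(z_0)} |u| \le \sup_{\Q_{r/2}(z_0)} u_+ + \sup_{\Q_{r/2}(z_0)} u_- < \infty$ for every $\Q_r(z_0) \Subset \Omega$. Since any compact subset of $\Omega$ can be covered by finitely many such fractional kinetic cylinders (by property~\eqref{cylindereq} and the underlying homogeneous structure on $\KK$), local boundedness follows. The main subtlety I expect is not analytical but structural, namely the need to check that $\tilde u = -u$ genuinely satisfies the weak formulation of Definition~\ref{weak-sol-int} with the modified source; this hinges on the odd symmetry of $\Ac$ which is a specific feature of the $|t|^{p-2}t$ nonlinearity and would fail for a more general anisotropic kernel.
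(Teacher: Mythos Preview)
Your proposal is correct and follows essentially the same approach as the paper: the survey does not give a detailed proof but simply notes that Theorem~\ref{loc bdd} yields the upper bound, that \cite[Remark~5.2]{AP-boundedness} supplies the analogous lower bound for supersolutions, and that combining the two gives local boundedness. Your argument via the odd symmetry of~$\Ac$ applied to~$\tilde u=-u$ is exactly the mechanism behind that remark, so you have spelled out what the paper only sketches; the covering step at the end is routine (though note that \eqref{cylindereq} as written refers to the local dilation structure, while here you need its fractional analogue for the cylinders~\eqref{frac_kin_cylinder}, which holds for the same reasons).
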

	
	The{\it singular case} when~$p \in (1,2)$ is more technical and in order to deal with the singularity of the nonlinear term in~\eqref{nonloc_diff} appearing in the kernel~$\Ac$ a further hypothesis also used in the parabolic framework, see e.~\!g.~\cite{DZZ21,HT22}, is needed to prove the desired upper estimate.
	
	\vspace{2mm}
	\begin{itemize}
		\item[\bf(H$_S$)] There exists a sequence~$\{u_\ell\}_{\ell \in \N}$ of bounded weak subsolutions to~\eqref{nonloc_fokker_planck} such that, for any~$z_0$ and~$r>0$ for which~$\Q_r(z_0) \Subset \O$, it holds
		$$
			\Tl_{\infty}\big((u_\ell)_+; z_0, \frac{r}{2}\big) \leq C, \qquad \textrm{for any}~\ell >0,
     	$$
		and
		$$
			u_\ell \to u, \qquad \textrm{in}~L^{2/p}(\Q_r(z_0)).
    	$$
		\end{itemize} 
	
	\begin{theorem}\label{loc bdd2}
		Let~$p \in (1,2)$,~$s \in (0,1)$ and let~$u$ be a weak 
		subsolution to~\eqref{nonloc_fokker_planck} in~$\O$ according to Definition~{\rm\ref{weak-sol-int}},~$\Q_r(z_0) \Subset  \O$
		and let {\bf (H$_S$)} holds. Then, for any~$\delta \in (0,1]$, it holds
		\begin{eqnarray}\label{eq_bdd2}
			\sup_{\Q_{\frac{r}{2}}(z_0)} u  &\le &  C \, \delta^{-\frac{n(p-1)}{sp}} \, \max \Big\{\, \dashint_{\Q_r(z_0)} u_+^{2/p} \, \dd v\,\dd x\,\dd t\, , \, 1\Big\}\\
			&& +\delta\Tl_\infty \big( u_+;z_0,\frac{r}{2} \big) +  	C \, \|h\|_{L^\infty(\Q_r(z_0))}^\frac{1}{p-1} , \nonumber
		\end{eqnarray}
		where~$\Tl_\infty(\cdot)$ in~\eqref{tail} and~$C= C(n,p,s,c_{\rm o}, \gamma, \lambda, \Lambda)>0$.
	\end{theorem}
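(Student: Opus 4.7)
The plan is to combine a De Giorgi--type iteration on bounded subsolutions with a limiting argument based on hypothesis \textbf{(H$_S$)}. Since the kernel~$\Ac$ is singular whenever~$u(v)=u(w)$ when~$p<2$, running a Caccioppoli estimate directly on~$u$ is delicate, because the very quantities one would like to integrate by parts against may fail to be well--defined. For this reason I would first work on the approximating sequence~$\{u_\ell\}$ provided by~\textbf{(H$_S$)}, each element of which is a \emph{bounded} subsolution, and only at the very end pass to the limit~$\ell\to\infty$.

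The first main step is to derive a Caccioppoli-type inequality for~$u_\ell$. Fix~$k>0$ and test~$u_\ell$ against~$\phi=(u_\ell-k)_+\,\eta^p$, where~$\eta$ is a smooth kinetic cut-off adapted to a pair of cylinders~$\Q_\r(z_0)\subset\Q_{\r'}(z_0)$ (as in the superlinear case). In the singular range~$p\in(1,2)$, the standard algebraic manipulations on the nonlocal form~$\Ac$ (via the inequality~$|a-b|^{p-2}(a-b)^2\ge c\min(|a|^{p-2},|b|^{p-2})(a-b)^2$, which uses crucially that~$u_\ell$ is bounded) produce, after Young's inequality, a right-hand side containing an~$L^{2/p}$ norm of~$(u_\ell-k)_+$ rather than an~$L^p$ norm; this is the structural reason for the exponent~$2/p$ in~\eqref{eq_bdd2}. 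The transport contribution from~$v\cdot\nabla_x$ and~$\partial_t$ is handled as in the proof of Theorem~\ref{loc bdd} via the duality pairing built into the space~$\W$, while the long--range integral is split at distance~$r/2$ from~$v_0$ and the outer piece is absorbed into~$\delta\,\Tl_\infty((u_\ell)_+;z_0,r/2)$ by means of Definition~\ref{tail-def}. The growth term~$f(u_\ell)$ is controlled by~$\|h\|_{L^\infty(\Q_r)}^{1/(p-1)}$ and a contribution of the form~$|u_\ell|^{\gamma-1}$ with~$\gamma\le p$, which gets reabsorbed into the main quantity by Young's inequality.

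The second step is the iteration. I would pick geometric sequences~$\r_j=\tfrac{r}{2}(1+2^{-j})$ of shrinking cylinders and levels~$k_j\nearrow k_\infty$, and couple the Caccioppoli inequality above, slicewise in the~$(x,t)$ variables, with a fractional kinetic Sobolev embedding in the spirit of Theorem~\ref{sob-nostra} (or its fractional analog) to upgrade the~$L^{2/p}$ norm of~$(u_\ell-k_j)_+$ to a higher integrability exponent. This yields a nonlinear recursion of the form~$A_{j+1}\le Cb^{\,j}A_j^{1+\beta}$ for
\[
A_j:=\dashint_{\Q_{\r_j}(z_0)}(u_\ell-k_j)_+^{2/p}\,\dd v\,\dd x\,\dd t,
\]
with~$\beta>0$ depending only on~$n,s,p$. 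Choosing~$k_\infty$ equal to a suitable multiple of the right-hand side of~\eqref{eq_bdd2}, De Giorgi's classical lemma guarantees~$A_j\to 0$, hence~$\sup_{\Q_{r/2}(z_0)}u_\ell\le k_\infty$, with a constant that is \emph{uniform in}~$\ell$ thanks to the uniform tail bound in~\textbf{(H$_S$)}. The free parameter~$\delta\in(0,1]$ and the power~$\delta^{-n(p-1)/(sp)}$ arise naturally from balancing the local and tail contributions in the level-set estimate.

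The last step is to pass to the limit~$\ell\to\infty$. The convergence~$u_\ell\to u$ in~$L^{2/p}(\Q_r(z_0))$ from~\textbf{(H$_S$)} takes care of the local term on the right-hand side, while the uniform control~$\Tl_\infty((u_\ell)_+;z_0,r/2)\le C$ combined with Fatou's lemma along an a.\,e.\ convergent subsequence produces the tail bound for~$u_+$. I expect the main obstacle to be exactly this limiting procedure: one has to make sure that every constant appearing in the iteration is genuinely independent of~$\ell$ (in particular that the reabsorption arguments in the Caccioppoli estimate do not hide an implicit dependence on~$\|u_\ell\|_{L^\infty}$), and that the pointwise supremum on~$\Q_{r/2}(z_0)$ is lower semicontinuous with respect to the~$L^{2/p}$ convergence, which is what allows~\eqref{eq_bdd2} to descend from~$u_\ell$ to~$u$.
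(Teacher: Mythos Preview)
Your proposal is correct and follows essentially the same route the paper sketches (the full proof being deferred to \cite{AP-boundedness}): a fractional Caccioppoli inequality on the bounded approximants~$u_\ell$, a De Giorgi--type iteration producing the bound~\eqref{eq_bdd2} uniformly in~$\ell$ thanks to the tail control in~\textbf{(H$_S$)}, and finally the passage to the limit~$\ell\to\infty$ via the~$L^{2/p}$ convergence. The paper explicitly records that~\textbf{(H$_S$)} enters precisely at this last limiting step, which is exactly where you place the main difficulty.
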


	 Because of the arbitrary range of the exponent~$p\in (1,\infty)$, we can not establish the desired estimate~\eqref{bdd} by simply applying most of the methods used in the linear framework (e.~\!g. velocity averaging and Fourier transform~\cite{Bouchut} or Dirichlet-to-Neumann's map method~\cite{GT21}). For this reason, starting from a proper fractional Caccioppoli-type inequality, we exploit an iterative scheme which takes in consideration the transport operator~$(\partial_t+v\cdot\nabla_x)$ alongside {the diffusion term}~$\Lc$. Further efforts are also needed in order to deal with the inhomogeneity~$f$ appearing on the righthand side of~\eqref{nonloc_fokker_planck}.
	
	We also notice that in the singular case, when~$p\in(1,2)$, the technical assumption~{\bf (H$_S$)} come into play when proving that the upper bound~\eqref{eq_bdd2} is satisfied uniformly by the approximating sequence~$\{u_\ell\}_{\ell \in \N}$ and subsequently passing to the limit as~$\ell \to \infty$. Dropping hypothesis {\bf (H$_S$)} is still an open problem.
	
	Lastly, the parameter~$\delta$ appearing in both estimate~\eqref{bdd} and~\eqref{eq_bdd2} allows a precise interpolation between the local and the  nonlocal contribution given by~$\Tl_\infty$, which can be controlled in a proper way by its weaker formulation~$\Tl(\cdot)$; see~\cite[Proposition~3.1]{AP-boundedness}.

    \subsection{Further developments}
    Despite the increasing interest in nonlocal problems several questions still remains open. Below, we list just a few possible developments related to the results presented in this last section of the overview.

     As natural in regularity theory, a subsequent further development of the estimates obtained in~\cite{AP-boundedness} would be proving some Harnack-type inequalities and a related H\"older continuity results for weak solutions. In the case of the Boltzmann equations these results are available in the relevant papers~\cite{IS-weak,IS}. As for~\eqref{nonloc_fokker_planck}, we refer to the aforementioned~\cite{Loh22} for the linear case when~$p=2$. However, in~\cite{Loh22} the author restricted her study to entirely nonnegative solutions and, as shown in~\cite[Proposition~3.1]{AP-boundedness}, under this hypothesis the tail contribution vanishes. Moreover, as proven by Kassmann in his breakthrough papers~\cite{Kas07,Kas11}, when considering the classical fractional Laplacian,  positivity cannot be dropped nor relaxed without considering on the righthand side of the Harnack inequality a nonlocal tail contribution. After the  results obtained in the elliptic framework~\cite{DKP14} and in the parabolic one~\cite{Kim19,KW23,Strom19-2} it is natural to wonder if some Harnack-type inequalities still hold for~\eqref{nonloc_fokker_planck} relaxing the sign assumption up to considering a kinetic tail contribution.

     Moreover, the quantitative approach used in~\cite{AP-boundedness} not only allows us to carefully deal with the nonlinearity given by the kernel~$\Ac$ and the nonlocality of the kernel~$K$, but it is feasible to treat even more general nonlocal equations. In this direction, one can consider more general nonlocal diffusions with non-standard growth conditions as done in~\cite{CMW22,CMW22-2}.

	Furthermore, one could investigate the regularity properties for solutions to a strictly related class of problems; that is, by adding in~\eqref{nonloc_fokker_planck} a  quasilinear operator modeled on the classical $p$-Laplacian.  Mixed-type operators are a really recent fields of investigation but, regularity and related properties of weak solutions to~\eqref{nonloc_fokker_planck} when the diffusion part in velocity~$\Lc$ is a mixed type operator are almost unknown. Clearly,  several results as e.~\!g., in~\cite{BDV21,BDV22,BMS23,BLS23,DFM22,GK22,OT23-2,SZ23}, would be expected do still hold for mixed-type {\it kinetic} operators.

	 Also one could add in~\eqref{nonloc_fokker_planck} a second integral-differential operator, $\mathcal{L}_{K;\alpha}$ of differentiability exponent~$t>s$ and summability growth~$q>1$, controlled by the zero set of a modulating coefficient~$\alpha\equiv\alpha(x,t)$; that is, the so-called nonlocal double phase problem, in the same spirit of the elliptic case treated in~\cite{BOS22,DFP19,HT22}.


\begin{thebibliography}{100}
	\bibliographystyle{plain}
	
	
	   	\bibitem{APT22} {\sc K. Adimurthi, H. Prasad, V. Tewary}: Local H\"older regularity for nonlocal parabolic $p$-Laplace equation. \href{https://arxiv.org/abs/2205.09695}{\tt arXiv2205.09695}~(2022).
	   	\vs
	   

	
\bibitem{AM}
{\sc D.~Albritton, S.~Armstrong, J.C.~Mourrat, M.~Novack}:
{{V}ariational methods for the kinetic {F}okker-{P}lanck equation.} \href{https://arxiv.org/abs/1902.04037}{\tt arXiv:1902.04037v2}~(2021).
	\vs

\bibitem{AC}
{\sc J. A. Alc\'antara, S.~Calogero}:
\newblock {On a relativistic fokker-planck equation in kinetic theory.}
\textit{Kinetic and Related Models}~{\bf4}, 401--426~(2011).
		\vs


\bibitem{AEP}
{\sc F.~Anceschi, M.~Eleuteri, S.~ Polidoro}: {A geometric statement of the Harnack inequality for a degenerate Kolmogorov equation with rough coefficients}. {\it Commun. Contemp. Math.}~{\bf21}, No. 7, Article ID 1850057, 17 (2019).
\vs

\bibitem{AMP}
{\sc F.~Anceschi, S.~Muzzioli, S.~ Polidoro}:
{Existence of a fundamental solution of partial differential equations associated to Asian options.}
\textit{Nonlinear Anal., Real World Appl.}~{\bf62}, 1--29~(2021).
	\vs
	
\bibitem{AP-boundedness}
{\sc F.~Anceschi, M. Piccinini}: {Boundedness estimates for nonlinear nonlocal kinetic 
Kolmogorov-FokkerPlanck equations.} \href{https://arxiv.org/abs/2301.06334}{\tt arXiv:2301.06334}~(2023).
	\vs

\bibitem{APsurvey}
{\sc F.~Anceschi, S. Polidoro}: {A survey on the classical theory for {K}olmogorov equation.}
{\it Matematiche 75}, No. 1, 221--258 (2020).
	\vs
	
\bibitem{APR}
{\sc F.~Anceschi, S.~ Polidoro, M.~A. Ragusa }:
{Moser’s estimates for degenerate Kolmogorov equations with non-negative divergence lower order coefficients.}
{\it Nonlinear Anal.}, {\bf189}, Article ID 111568, 19 (2019).
	\vs
	
\bibitem{relativistico} 
{\sc F. Anceschi, S. Polidoro, A. Rebucci}:
\newblock {Harnack inequality and asymptotic lower bounds for the relativistic Fokker-Planck operator.} \href{https://arxiv.org/abs/2302.11889}{\tt arXiv:2211.05736}~(2022). 
	\vs
	
\bibitem{AR-harnack}
{\sc F.~Anceschi, A.~Rebucci}:
{A note on the weak regularity theory for degenerate Kolmogorov equations.}
{\it J. Differential Equations}~{\bf341}, 538--88 (2022).
	\vs
	
\bibitem{AR-funsol}
{\sc F.~Anceschi, A.~ Rebucci}: \textit{On the fundamental solution for degenerate Kolmogorov equations with rough coefficients.} J Elliptic Parabol Equ \href{https://doi.org/10.1007/s41808-022-00191-8}{\tt doi:10.1007/s41808-022-00191-8} (2022).
	
\bibitem{AR-obstacle}
{\sc F.~Anceschi, A.~ Rebucci}: {On the obstacle problem associated to the Kolmogorov-Fokker-Planck operator with rough coefficients.} \href{https://arxiv.org/abs/2302.11889}{\tt arXiv:2302.11889}~(2023).
	\vs

\bibitem{Antonelli}
{\sc F.~Antonelli, A.~Pascucci} :
{On the viscosity solutions of a stochastic differential utility problem.}
{\it J. Differential Equations}~{\bf186}, 69--87 (2002).
	\vs
	
\bibitem{Barucci} {\sc E. Barucci, S. Polidoro, V. Vespri} {Some results on partial differential equations and Asian options.} {\it Math. Models Methods Appl. Sci.} {\bf 11}, No. 03, 475--497 (2001).
	\vs
	
	  \bibitem{BDV21} {\sc S.~Biagi, S.~Dipierro, E.~Valdinoci, E.~Vecchi}: Semilinear elliptic equations involving mixed local and nonlocal operators. {\it Proc. Roy. Soc. Edinburgh Sect. A} {\bf 151}, no. 5, 1611--1641 (2021)
	\vs
	
  \bibitem{BDV22}	{\sc S.~Biagi, S.~Dipierro, E.~Valdinoci, E.~Vecchi}: Mixed local and nonlocal elliptic operators: regularity and maximum principles. {\it Comm. Partial Differential Equations}~{\bf47}, no. 3, 585--629 (2022). 
 
 \vspace{0.1mm}
  		
	 \bibitem{BMS23} {\sc A. Biswas, M. Modasiya, A. Sen}: Boundary regularity of mixed local-nonlocal operators and its application. {\it Ann. Mat. Pura Appl.} (4) {\bf 202} (2023), no. 2, 679--710. 
	 \vs
	
\bibitem{BLU}
{\sc A.~Bonfiglioli, E.~Lanconelli, F.~Uguzzoni}
{\it Stratified Lie Groups and Potential Theory for Their Sub-Laplacians.}
Springer-Verlag Berlin Heidelberg (2007)
	\vs

\bibitem{Bouchut}
{\sc F. Bouchut}
{Hypoelliptic regularity in kinetic equations.}
{\it J. Math. Pures Appl.} (9) {\bf81}, No. 11, 1135--1159 (2002).
	\vs
	
	\bibitem{Brezis} {\sc H. Brezis}: 
	{\it Functional analysis, Sobolev spaces and Partial Differential Equations}.
	Universitext. Springer, New York, (2011).
	\vs
	
	\bibitem{BV15} {\sc C.~Bucur, E.~Valdinoci}: Nonlocal diffusion and applications. {\it Lecture Notes of the Unione Matematica Italiana}, 20, Springer~(2015)
	\vs
	
		\bibitem{BOS22} {\sc S.-S. Byun, J. Ok, K. Song}: H\"older regularity for weak solutions to nonlocal double phase problems.
	{\it J. Math. Pures  Appl.}~{\bf 168} (2022), 110--142.
	 \vs
	
		\bibitem{BLS23} {\sc S.-S. Byun, H.-S. Lee, K. Song}: Regularity results for mixed local and nonlocal double phase functionals.~\href{https://arxiv.org/abs/2301.06234v1}{\tt arXiv:2301.06234}~(2023)
		\vs
	
 \bibitem{CMW22} {\sc J. Chaker, K. Minhyun, M. Weidner} : {Regularity for nonlocal problems with non-standard growth}. {\it Calc. Var. Partial Differential Equations} {\bf61}, No. 6, Paper No. 227 (2022).
 	\vs
 	
  \bibitem{CMW22-2}	{\sc J. Chaker, K. Minhyun, M. Weidner}: Harnack inequality for nonlocal problems with non-standard growth. {\it Math. Ann.} \href{https://doi.org/10.1007/s00208-022-02405-9}{\tt https://doi.org/10.1007/s00208-022-02405-9}~(2022).
   \vs
   
\bibitem{CZ18} {\sc Z.-Q. Chen, X. Zhang}: {$L^p$-maximal hypoelliptic regularity of nonlocal kinetic Fokker-Planck operators}. {\it J. Math. Pures Appl.} 116, No. 6, 52--87 (2018).
	\vs

\bibitem{CPP}
{\sc C.~Cinti, A.~Pascucci, S.~Polidoro}:
{Pointwise estimates for a class of non-homogeneous Kolmogorov equations.}
{\it Math. Ann.} {\bf 340}, No. 2, 237--264 (2008).
	\vs


\bibitem{DKP14} {\sc A. Di Castro, T. Kuusi, G. Palatucci}: {Nonlocal Harnack inequalities.} {\it J. Funct. Anal.}~{\bf 267}~, No. 6, 1807--1836 (2014).
	\vs

\bibitem{DKP16} {\sc A. Di Castro, T. Kuusi, G. Palatucci}: {Local behavior of fractional $p$-minimizers}. {\it Ann. Inst. H. Poincar\'e Anal. Non Lin\'eaire.}~{\bf33}, 1279--1299 (2016).
	\vs

\bibitem{obstacle-ex} {\sc M. Di Francesco, A. Pascucci, S. Polidoro}:
	{The obstacle problem for a class of hypoelliptic ultraparabolic equations.} {\it Proc. R. Soc. Lond. Ser. A Math. Phys. Eng. Sci.} {\bf464}, 155--176 (2008).
	\vs

\bibitem{DZZ21} {\sc M. Ding, C. Zhang, S. Zhou}: {Local boundedness and H\"older continuity for the parabolic fractional $p$-Laplace equations}. {\it Calc. Var. Partial Differential Equations} {\bf60}, No.~1, 1--21 (2021).
	\vspace{0.2mm}
	
	\bibitem{DFM22} {\sc C. De Filippis, G. Mingione}:  Gradient regularity in mixed local and nonlocal problems. {\it Math. Ann.} (2022).
        \vs
        
        	\bibitem{DFP19} {\sc C. De Filippis, G. Palatucci}:  H\"older regularity for nonlocal double phase equations. {\it J. Differential Equations}~{\bf 267} (2019), no.~1, 547--586.
        	\vs

\bibitem{obstacle-2} {\sc M. Frentz, K. Nystr\"{o}m, A. Pascucci, S. Polidoro} :{Optimal regularity in the obstacle problem for 
	Kolmogorov operators related to American Asian options.} {\it Math. Ann.} {\bf347}, No. 4, 805--838 (2010).
	\vs
	
	\bibitem{GK22} {\sc P. Garain, J. Kinnunen}: On the regularity theory for mixed local and nonlocal quasilinear elliptic equations. {\it Trans. Amer. Math. Soc.} {\bf 375}~(2022), no. 8, 5393--5423.
	
	
	      \bibitem{GLV23}{\sc N. Garofalo, A. Loiudice, D. Vassylev}:  Optimal decay for solutions of nonlocal semilinear equations with critical exponent in homogeneous group. \href{https://arxiv.org/abs/2210.16893}{\tt arXiv:2210.16893}~(2022)
	\vs

   \bibitem{GT21} {\sc N. Garofalo, G. Tralli}: {A class of nonlocal hypoelliptic operators and their extensions.} {\it Indiana Univ. Math. J.}~{70}, No. 5, 1717--1744 (2021).
	\vs


\bibitem{GIMV}
{\sc F.~Golse, C.~Imbert, C.~Mouhot and A.~\!F.~Vasseur}: {Harnack inequality for kinetic Fokker-Planck equations with rough coefficients and application to the Landau equation.} {\it Ann. Sc. Norm. Super. Pisa, Cl. Sci. (5)} {\bf 19}, No. 1, 253--295 (2019).
	\vs

\bibitem{GI}
{\sc J.~Guerand, C.~Imbert}:  {{L}og-transform and the weak {H}arnack inequality for kinetic {F}okker-{P}lanck equations.} {\it J. Inst. Math. Jussieu}, 1--26 (2022).
	\vs

\bibitem{GM}
{\sc J.~Guerand, C.~Mouhot}:
{Quantitative {D}e {G}iorgi methods in kinetic theory.} {\it J. Éc. Polytech., Math.}~{\bf9}, 1159--1181 (2022).
	\vs

        
\bibitem{Hobson} {\sc D.G. Hobson, L.C.G Rogers}: {Complete models with stochastic volatility.} {\it Math. Finance} {\bf8}, 27--48 (1998).
	\vs


\bibitem{H}
{\sc L.~H{\"o}rmander}:
{Hypoelliptic second order differential equations.}
{\it Acta Math.} {\bf119}, 147--171 (1967).
	\vs
	
\bibitem{HMP19} {\sc L. Huang, S. Menozzi, E.  Priola}: {$L^p$ estimates for degenerate non-local Kolmogorov operators.} {\it J. Math. Pures Appl.}~{\bf9}, No. 121, 162--215 (2019).
	\vs
	

%

	\bibitem{Imb05} {\sc C. Imbert}: {A non-local regularization of first order Hamilton-Jacobi equations}. {\it J. Differential Equations} {\bf211}, No. 1, 218--246 (2005).
    	\vs

\bibitem{IS-weak}
{\sc C.~Imbert, L~Silvestre}:
 {The weak Harnack inequality for the Boltzmann equation without cut-off.}
{\it J. Eur. Math. Soc. (JEMS)} {\bf22}, No. 2, 507--592 (2020).
	\vs
	
\bibitem{IS} {\sc C. Imbert, L. Silvestre}: {Global regularity estimates for the Boltzmann equation without cut-off.} {\it J. Amer. Math. Soc.} No. 3,  625--703 (2022).
	\vs
 
  \bibitem{Kas07} {\sc M. Kassmann}: The classical Harnack inequality fails for nonlocal operators. \href{https://citeseerx.ist.psu.edu/viewdoc/download?doi=10.1.1.454.223\&rep=rep1\&type=pdf}{\tt https://citeseerx.ist.psu.edu/viewdoc/down\break load?doi=10.1.1.454.223}~(2007)
    \vs
  
  	\bibitem{Kas11} {\sc M. Kassmann}: Harnack inequalities and H\"older regularity estimates for nonlocal operator revisited. \href{https://sfb701.math.uni-bielefeld.de/preprints/sfb11015.pdf}{\tt https://sfb701.math.uni-bielefeld.de/pre\break prints/sfb11015.pdf}~(2011)
  \vs
 
     \bibitem{KW23} {\sc M. Kassmann, M. Weidner}: The parabolic Harnack inequality for nonlocal equations. \href{https://arxiv.org/abs/2303.05975}{\tt arXiv:2303.05975}~(2023)
     \vs
    
    \bibitem{Kim19} {\sc Y.-C. Kim}: Nonlocal Harnack inequalities for nonlocal heat equations. \textit{J. Differential  Equations} 267(11), 6691--6757~(2019).
   \vs
   
\bibitem{SK}
{\sc D.~Kinderlehrer, G.~Stampacchia}:
{\it An introduction to variational inequalities and their applications.}
Pure and Applied Mathematics, 88. New York etc.: Academic Press (A Subsidiary of Harcourt Brace Jovanovich, Publishers). XIV, 313 (1980). 
	\vs

\bibitem{Kruzhkov}
{\sc S.~N.~Kruzhkov}:
{A priori bounds and some properties of solutions of elliptic and parabolic equations.} 
{\it Mat. Sb. (N.S.)} {\bf4}, 65(107), 522--570 (1964).
	\vs
	
	\bibitem{LPP}
{\sc A.~Lanconelli, A.~Pascucci, S.~Polidoro}:
{Gaussian lower bounds for non-homogeneous Kolmogorov equations with measurable coefficients}
{\it J. Evol. Equ.} {\bf 20}, No. 4, 1399-1417 (2020).

\bibitem{LP}
{\sc E.~Lanconelli, S.~Polidoro}:
{On a class of hypoelliptic evolution operators.}
{\it Rend. Semin. Mat., Torino} {\bf 52}, No. 1, 29-63 (1994).
	\vs

\bibitem{Li14} {\sc W.-X. Li}: {Global hypoelliptic estimates for fractional order kinetic equation.} {\it Math. Nachr.}  {\bf287}, No. 5-6, 610--637 (2014).	
	\vs

\bibitem{Liao22} {\sc N. Liao}: {H\"older regularity for parabolic fractional $p$-Laplacian.} \href{https://arxiv.org/pdf/2205.10111.pdf}{\tt arXiv:2205.10111}~(2022).
	\vs

\bibitem{LN}
{\sc M.~Litsgard, K.~Nystr\"om}:
{The Dirichlet problem for Kolmogorov-Fokker-Planck type equations with rough coefficients.}
{\it J. Funct. Anal.}~{\bf 281}, No. 10, 1--39 (2021).
	\vs

\bibitem{Loh22} {\sc A. Loher}: {Quantitative De Giorgi methods in kinetic theory for non-local operators}. \href{https://arxiv.org/abs/2203.16137}{\tt arXiv:2203.16137}~(2022).
	\vs

\bibitem{Manfredini}
{\sc M.~Manfredini}
{The Dirichlet problem for a class of ultraparabolic equations.}
{\it Adv. Differ. Equ.} {\bf2}, No. 5, 831-866 (1997).
	\vs
 
 \bibitem{MPPP21} {\sc M.~Manfredini, G.~Palatucci, M.~Piccinini, S.~Polidoro}
 {H\"older continuity and boundedness estimates for nonlinear fractional equations in the  Heisenberg group}. {\it J. Geom. Anal.}~{\bf33}, 77~(2023).
 
\vspace{0.7mm}
	
\bibitem{M4}
{\sc J.~Moser}:
 {A rapidly convergent iteration method and non-linear differential equations.}
{\it Ann. Sc. Norm. Super. Pisa, Sci. Fis. Mat., III. Ser. 20}, No. 265-315, 499-535 (1966).
  	\vs
  	
\bibitem{obstacle-3} {\sc K. Nystr\"{o}m, A. Pascucci, S. Polidoro}: {Regularity near the initial state in the obstacle problem for a class of hypoelliptic ultraparabolic operators
.} {\it J. Differential Equations} {\bf249}, 2044--2060 (2010).
	\vs
   
   
   \bibitem{OT23-2} {\sc P. Oza, J. Tyagi}:  Regularity of solutions to variable-exponent degenerate mixed fully nonlinear local and nonlocal equations.
   \href{https://arxiv.org/abs/2302.06046}{\tt arXiv:2302.06046}~(2023)
   \vs


	\bibitem{Pal18} {\sc G. Palatucci}: {The Dirichlet problem for the $p$-fractional Laplace equation}. {\it Nonlinear Anal.} {\bf177}, 699--732 (2018).
    	\vs
    	
   	\bibitem{PP21} {\sc G. Palatucci, M. Piccinini}:
   {Nonlocal Harnack inequalities in the Heisenberg group}.
   {\it Calc. Var. Partial Differential Equations}~{\bf 61}, Art.~185 (2022).
   	\vs
   
 \bibitem{Pascucci} {\sc A. Pascucci}: {Free boundary and optimal stopping problems for American Asian options.} {\it Finance Stoch.} {\bf 12}, 21--41 (2008).  
 	\vs 
   
\bibitem{PascucciBook} {\sc A. Pascucci}: {\it PDE and martingale methods in option pricing.}
	Bocconi \& Springer Series 2. Milano: Springer; Milano: Bocconi University Press (ISBN 978-88-470-1780-1/hbk; 978-88-470-1781-8/ebook). xvii, 719 p. 
	(2011).   
   	\vs
   
   \bibitem{PP22} {\sc A. Pascucci, A. Pesce}: Sobolev embeddings for kinetic Fokker-Planck equations. \href{https://arxiv.org/abs/2209.05124}{\tt arXiv:2209.05124}~(2022).
   \vs
   
 
\bibitem{PP}
{\sc A.~Pascucci, S.~Polidoro}
{The Moser’s iterative method for a class of ultraparabolic equations.}
{\it Commun. Contemp. Math.}~{\bf 6}, No. 3, 395--417 (2004).
	\vs
	
	   \bibitem{Pes22} {\sc A. Pesce}: Approximation and Interpolation in Kolmogorov-type groups. \href{https://arxiv.org/abs/2205.05340}{\tt arXiv:2205.05340}~(2022).
	\vs
	

 \bibitem{Pic22} {\sc M. Piccinini}
{The obstacle problem and the Perron Method for nonlinear fractional equations in the Heisenberg group.}
{\it Nonlinear Anal.}~{\bf222}~, Art. 112966 (2022).		
	\vs


  \bibitem{HT22} {\sc H. Prasad, V. Tewary}: {Local boundedness of variational solutions to nonlocal double phase parabolic equations.} \href{https://arxiv.org/abs/2112.02345}{\tt arXiv 2112.02345}~(2022).
 	\vs

	\bibitem{SZ23} {\sc B. Shang, C. Zhang}: Harnack Inequality for Mixed Local and Nonlocal Parabolic $p$-Laplace Equations. {\it J. Geom. Anal.} {\bf33}, no.~3 (2023).
    \vspace{0.3mm}

\bibitem{Silvestre1}
{\sc L.~Silvestre}: {Regularity estimates and open problems in kinetic equations}. 
\newblock \href{https://arxiv.org/abs/2204.06401}{\tt arXiv:2204.06401}~(2022).

 	
 \bibitem{Silvestre2}
 {\sc L.~Silvestre}: {H\"older estimates for kinetic Fokker-Planck equations up to the boundary}.
{\it Ars Inveniendi Analytica: Mathematics of Fluids, Gases and Plasmas} (2022).
	\vs



\bibitem{Sto19} {\sc L. F. Stokols}: { H\"older continuity for a family of nonlocal hypoelliptic kinetic equations.} {\it SIAM J. Math. Anal.} {\bf51}, No. 6, 4815--4847 (2019).
	\vs
	
	
\bibitem{Strom19} {\sc M. Str\"omqvist}: {Local boundedness of solutions to non-local parabolic equations modeled on the fractional $p$-Laplacian}. {\it J. Differential Equations}  {\bf266} No. 12,  7948--7979 (2019).
	\vs
	
\bibitem{Strom19-2} {\sc M. Str\"omqvist}: {Harnack's inequality  for parabolic nonlocal equations}. {\it Ann. Inst. H. Poincar\'e C  Anal. Non Lin\'eaire} {\bf36} No. 6, 1709--1745 (2019).
	\vs



       


	 
\bibitem{WD17} {\sc M. Wang, J. Duan}: {Existence and regularity of a linear nonlocal Fokker-Planck equation with growing drift.} {\it J. Math. Anal. Appl.} {\bf449}, No. 1, 228--243  (2017).
	\vs
	
\bibitem{WZ4}
{\sc W.~Wang, L.~Zhang}:
{The $C^\alpha$ regularity of a class of non-homogeneous ultraparabolic equations.}
{\it Sci. China, Ser. A} {\bf52}, No. 8, 1589--1606 (2009).
	\vs
	
\bibitem{WZ3}
{\sc W.~Wang,L.~Zhang}:
{The $C^\alpha$ regularity of weak solutions of ultraparabolic equations.}
{\it Discrete Contin. Dyn. Syst.} {\bf29}, No. 3, 1261--1275 (2011).
	\vs
	
\bibitem{WZ-preprint}
{\sc W.~Wang, L.~Zhang}:
{${C}^{\alpha}$ regularity of weak solutions of non-homogenous ultraparabolic equations with drift terms.}
\newblock \href{https://arxiv.org/abs/1704.05323}{\tt arXiv:1704.05323}~(2017).
	\vs
	
\bibitem{Zhu}
{\sc Y.~Zhu}: {Regularity of kinetic Fokker-Planck equations in bounded domains}
\newblock \href{https://arxiv.org/abs/2206.04536}{\tt arXiv:2206.04536}
(2022).

\end{thebibliography}
\end{document}